\newcommand{\st}{such that }
\newcommand{\RMod}{\hbox{R{\rm -Mod}}}
\newcommand{\ModR}{\hbox{{\rm Mod-}R}}
\newcommand{\add}{\mathrm{add}}
\newcommand{\Add}{\mathrm{Add}}
\newcommand{\p}{\mathfrak{p}}
\newcommand{\q}{\mathfrak{q}}
\newcommand{\tube}{\mathfrak{t}}
\DeclareMathOperator{\Hom}{Hom}
\DeclareMathOperator{\End}{End}
\DeclareMathOperator{\Ext}{Ext}
\DeclareMathOperator{\Tor}{Tor}
\DeclareMathOperator{\Ker}{Ker}
\DeclareMathOperator{\Img}{Im}
\theoremstyle{plain}
\newtheorem{thm}{Theorem}[section]
\newtheorem{prop}[thm]{Proposition}
\newtheorem{lem}[thm]{Lemma}
\newtheorem{cor}[thm]{Corollary}
\theoremstyle{definition}
\newtheorem{defn}[thm]{Definition}
\newtheorem{exm}[thm]{Example}
\theoremstyle{remark}
\newtheorem{rem}{Remark}
\begin{document}
\title[Approximations and ML conditions -- applications]{Approximations and Mittag-Leffler conditions \quad the applications}

\author{\textsc{Lidia Angeleri H\" ugel}}
\address{Dipartimento di Informatica -
Settore di Matematica\\ 
Universit\`a degli Studi di Verona\\
Strada le Grazie 15 - Ca' Vignal\\
37134 Verona, Italy}
\email{lidia.angeleri@univr.it}

\author{\textsc{Jan \v Saroch}}
\address{Charles University, Faculty of Mathematics and Physics, Department of Algebra \\ 
Sokolovsk\'{a} 83, 186 75 Praha~8, Czech Republic}
\email{saroch@karlin.mff.cuni.cz}

\author{\textsc{Jan Trlifaj}}
\address{Charles University, Faculty of Mathematics and Physics, Department of Algebra \\ 
Sokolovsk\'{a} 83, 186 75 Praha~8, Czech Republic}
\email{trlifaj@karlin.mff.cuni.cz}
 
\keywords{Mittag-Leffler conditions, approximations of modules, tilting module, cotorsion pair, pure-injective module, deconstructible class, locally $T$-free modules, Bass module, mono-orbit.}

\thanks{The research of Angeleri H\"ugel has been supported by  DGI MICIIN
MTM2011-28992-C02-01,  by  Generalitat de Catalunya through Project 2009 SGR
1389, and by Fondazione Cariparo, Progetto di Eccellenza ASATA.
The research of \v Saroch and Trlifaj supported by grant GA\v CR 14-15479S}

%\subjclass[2010]{13F07 (primary), 13F10, 13F20, 03H15 (secondary)}
\date{\today}

% = Abstract ========================================================
\begin{abstract} 
A classic result by Bass says that the class of all projective modules is covering, if and only if it is closed under direct limits. Enochs extended the if-part by showing that every class of modules $\mathcal C$, which is precovering and closed under direct limits, is covering, and asked whether the converse is true. We employ the tools developed in \cite{S} and give a positive answer when $\mathcal C = \mathcal A$, or $\mathcal C$ is the class of all locally $\mathcal A ^{\leq \omega}$-free modules, where $\mathcal A$ is any class of modules fitting in a cotorsion pair $(\mathcal A, \mathcal B)$ such that $\mathcal B$ is closed under direct limits. This setting includes all cotorsion pairs and classes of
locally free modules arising in (infinite-dimensional) tilting theory.
 We also consider two particular applications: to pure-semisimple rings, and artin algebras of infinite representation type. 
%
%In his celebrated  Theorem P,  Bass characterized the existence of projective covers in terms of closure under direct limits and  descending chain conditions. Later, Enochs gave an argument for showing that every precovering class $\mathcal A$ closed under direct limits is covering, and he asked whether the converse is true. In the present paper, we employ the tools developed in \cite{S} to exhibit some interesting  cases 
% where this question has a positive answer. Moreover, we  present some applications to tilting theory, to pure-semisimple rings and to hereditary artin algebras of infinite representation type. 
%
%We employ the tools developed in \cite{S} for a detailed investigation of the cotorsion pairs $(\mathcal A, \mathcal B)$ over an arbitrary ring, such that the class $\mathcal B$ is closed under direct limits. In particular, we give a positive answer to a question of Enochs in this setting, and for each (infinitely generated) tilting module $T$, we completely characterize the case when the class of all locally $T$-free modules is covering by a property of $T$ close to its $\sum$-pure-injectivity. We derive some consequences of the latter fact related to the Pure-Semisimplicity Conjecture. We finish by an explicit example of a Bass module $N$ over a hereditary artin algebra of infinite representation type such that $N$ does not possess a locally Baer precover.
\end{abstract}

\maketitle
\vspace{4ex}

\section{Introduction} 
The additive closure $\Add (M)$ of a module $M$ over a ring $R$ is always a precovering class. Bass' Theorem P deals with the existence of minimal right $\Add (M)$-approximations when $M=R$. More generally, when $M$ is a direct sum of finitely presented modules, the existence of minimal right $\Add (M)$-approximations is equivalent to $\Add (M)$ being closed under direct limits, and it  can be rephrased by a~descending chain condition over the endomorphism ring of $M$, see \cite[Theorem 4.4]{A}. In this paper, we will prove the same result for modules occurring as additive generators of the kernel of certain cotorsion pairs, including the ones studied in (infinite dimensional) tilting theory.

More precisely, we will consider cotorsion pairs $(\mathcal A, \mathcal B)$ with the right-hand class being closed under direct limits. It  was proved in \cite{S} that such cotorsion pairs are always complete and of countable type, and that the class $\mathcal B$ is even definable, that is, it is closed  under pure submodules, in addition to direct products and direct limits. We are going to show that the kernel $\mathcal A\cap\mathcal B$ is of the form $\Add (M)$ for some module $M$, and that $M$ has the properties discussed above if and only if the class $\mathcal A$ is closed under direct limits, or equivalently,  $(\mathcal A, \mathcal B)$ is a perfect cotorsion pair, i.e.~it yields  $\mathcal A$-covers and  $\mathcal B$-envelopes (Corollary~\ref{c:P}). 

The key to prove these results is  a reduction  to the countable case.   Indeed, the tools developed in  \cite{S} allow us to test for approximation properties on a particular class of countably presented modules, the Bass modules. They further provide a~useful connection with the class $\mathcal L$ of all locally free modules (with respect to $\mathcal A^{\le \omega}$). This class  is located between $\mathcal A$ and its direct limit closure, and it can be described in terms of a Mittag-Leffler condition. It turns out that  the cotorsion pair   $(\mathcal A, \mathcal B)$ is perfect if and only if $\mathcal L$ is  deconstructible, or equivalently,  every module has an $\mathcal L$-precover.

In particular, our  results apply to the cotorsion pairs  $(\mathcal A, \mathcal B)$ where $\mathcal B=T^{\perp_\infty}$ for some tilting module $T$.  In this case,  an additive generator of the kernel is provided by $T$ itself. It follows that $(\mathcal A, \mathcal B)$ is perfect  if and only if every pure submodule of a direct sum of copies of $T$ is a direct summand.  In some cases, e.g.~when the ring is noetherian and $T$ has projective dimension at most one,  this amounts to $T$ being $\sum$-pure-injective and even product-complete.

The paper is organized as follows. After some preliminaries in Section \ref{sec:prelim}, we start investigating cotorsion pairs $(\mathcal A, \mathcal B)$ with both classes being closed under direct limits in Section~\ref{sec:closed}. Section~\ref{sec:lfree} is devoted to locally free modules. Section~\ref{sec:covers} contains our main results discussed above.  In Section~\ref{sec:pss}, we derive some consequences  related to the Pure-Semisimplicity Conjecture. Section~\ref{sec:herartinalg} exhibits an explicit example of a Bass module $N$ over a hereditary artin algebra of infinite representation type such that $N$ does not possess a locally Baer precover.

\section{Preliminaries}
\label{sec:prelim}

We will freely use the terminology introduced in \cite{S}. Here we collect some further notions needed in the sequel.

\begin{defn}\label{d:tilt} A module $T$ is \emph{tilting}, provided $T$ has finite projective dimension, $\Ext ^i_R(T,T^{(I)}) = 0$ for each $i \geq 1$ and each set $I$, and there exist a $k < \omega$ and an exact sequence $0 \to R \to T_0 \to \dots \to T_k \to 0$ such that $T_i \in \Add (T)$ for each $i \leq k$. Each tilting module induces the \emph{tilting class} $\mathcal B = T^{\perp_\infty} = \bigcap_{1 \leq i}\Ker \Ext ^i_R(T,-)$. Moreover, $T$ is called \emph{$n$-tilting} in case $T$ has projective dimension $\leq n$.
\end{defn}

With each tilting module $T$, a cotorsion pair and a class of locally free modules are associated, as follows:
If $T$ is a tilting module with the induced  tilting class $\mathcal B$, then there is a hereditary cotorsion pair $\mathfrak C =  (\mathcal A, \mathcal B)= (^\perp \mathcal B,\mathcal B)$, called the \emph{tilting cotorsion pair} induced by $T$. 
%The class $\mathcal A$ is the \emph{left tilting class} induced by $T$. 
The kernel of $\mathfrak C$ equals $\Ker \mathfrak C = \Add (T)$, and $\mathfrak C$ is of finite type. In particular, the class $\mathcal A$ is $\aleph_1$-deconstructible, and $\mathcal B$ is closed under direct limits.     

The 
locally $\mathcal A^{\le \omega}$-free modules (in the sense of \cite[Definition 2.1]{S}) are called the \emph{locally $T$-free} modules. 

\begin{exm} (i) If $T$ is the $0$-tilting module $R$, then the locally $T$-free modules coincide with the flat Mittag-Leffler modules \cite{HT}. Indeed, one of our main goals here will be to extend the results proved in \cite{S} for flat Mittag-Leffler modules to the general setting of locally $T$-free modules for an arbitrary tilting module $T$.

(ii) If $R$ is a hereditary artin algebra of infinite representation type and $T$ is the Lukas tilting module, then the locally $T$-free modules are called the \emph{locally Baer modules}, \cite{SlT}. This example will be considered in more detail in Section~\ref{sec:herartinalg}.   
\end{exm}

 Given a class $\mathcal B$ of modules closed under direct limits and products, we know by \cite[Lemma 5.3]{S}  that $\mathcal B$ contains a pure-injective module $C$ such that each $B\in\mathcal B$ can be purely embedded into a direct product of copies of $C$, see also \cite[5.3.52]{P}. We will call any such $C\in\mathcal B$ \emph{an elementary cogenerator for $\mathcal B$}. 

The pure-injective hull of a module $M$ will be denoted by $PE(M)$. A pure-injective module is \emph{discrete} if it is isomorphic to the pure-injective hull of a direct sum of indecomposable pure-injective modules.

Moreover, a module $M$ is called \emph{$\Sigma$-pure-split} if for all $N\in\Add (M)$, any pure embedding into $N$ splits. Each $\sum$-pure-injective module is $\Sigma$-pure-split, but the converse fails in general (see Section 5).

 \medskip
 
Before we proceed, let us recall several important results from \cite{S}. First of all, if $ (\mathcal A, \mathcal B)$ is a cotorsion pair with $\mathcal B$ being closed under direct limits, then it follows from \cite[Lemma 4.2]{S} that every module in $\mathcal A$ is strict $\mathcal B$-stationary.

We are going to use the following lemma many times. 

\begin{lem}\label{l:Cext} \cite[Lemma 5.4]{S} Let $C$ be a pure-injective module which cogenerates $\ModR$, and $M$ be a strict $C$-stationary module. Then there exists an $\aleph_1$-dense system $\mathcal L$ of strict $C$-stationary submodules of $M$ \st  $$\Hom_R(M,C)\to\Hom_R(N,C)\hbox{ is surjective}\eqno{(\dagger)}$$ 
for every directed union $N$ of modules from $\mathcal L$. 
\end{lem}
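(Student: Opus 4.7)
The plan is to construct $\mathcal L$ as a refinement of the witnessing system provided by strict $C$-stationarity, and then to verify $(\dagger)$ by building the extension $\tilde f\in\Hom_R(M,C)$ of a given $f\colon N\to C$ through a transfinite process that exploits both the image-stabilization coming from stationarity and the pure-injectivity of $C$.

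First, I would unpack the hypothesis: strict $C$-stationarity provides an $\aleph_1$-dense system $\mathcal D$ of countably generated submodules of $M$ such that, for each $D\in\mathcal D$, there exists $D'\in\mathcal D$ with $D\subseteq D'$ and
$$\Img\bigl(\Hom_R(D',C)\to\Hom_R(D,C)\bigr)=\Img\bigl(\Hom_R(M,C)\to\Hom_R(D,C)\bigr).$$
I would then take $\mathcal L$ to consist of those $N\subseteq M$ that arise as countable unions $N=\bigcup_{n<\omega}D_n$ of ascending chains inside $\mathcal D$ in which each inclusion $D_n\subseteq D_{n+1}$ witnesses the stabilization above for $D_n$. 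A routine closed-unbounded argument shows that $\mathcal L$ is still $\aleph_1$-dense in $M$; moreover every $N\in\mathcal L$ is itself strict $C$-stationary, because the tail of its defining chain provides the required dense system of countably generated submodules with image-stabilization in $\Hom_R(-,C)$.

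Second, I would verify $(\dagger)$, starting with the case where the directed union is a single $N=\bigcup_{n<\omega}D_n\in\mathcal L$. For $f\colon N\to C$, the image of $f|_{D_n}$ lies in $\Img(\Hom_R(D_{n+1},C)\to\Hom_R(D_n,C))=\Img(\Hom_R(M,C)\to\Hom_R(D_n,C))$, so each $f|_{D_n}$ lifts to some $g_n\in\Hom_R(M,C)$. The incompatibilities among the $g_n$'s can be absorbed, at each step, by correcting $g_{n+1}$ by an element of $\Hom_R(M,C)$ vanishing on $D_n$ so that it agrees with $g_n$ there; pure-injectivity of $C$ and countable generation of $\bigcup D_n$ then allow passage to a coherent limit, producing $\tilde f\in\Hom_R(M,C)$ with $\tilde f|_N=f$. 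For an arbitrary directed union $N=\bigcup_{i\in I}N_i$ with $N_i\in\mathcal L$, I would well-order a cofinal subset of $I$ as an ordinal $\lambda$, pass to a continuous filtration, and repeat the argument transfinitely, invoking the image-stabilization at successor stages and pure-injectivity of $C$ to take limits at limit stages.

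The main obstacle is the transfinite limit step at ordinals of uncountable cofinality: even though each $N_i$ is countably generated, $M$ need not be, so one cannot simply form a pointwise limit of the partial lifts. This is precisely where the interaction between stationarity (which forces the inverse system of partial lifts to satisfy a Mittag-Leffler type condition with surjective connecting maps on a cofinal subsystem) and pure-injectivity of $C$ (which promotes this inverse-system statement to the actual existence of a compatible global lift, since $C$ cogenerates $\ModR$ and pure embeddings into products of $C$ split the relevant restriction maps) must be carefully orchestrated. Getting the bookkeeping right so that the inductive construction closes up at all limit stages is the crux of the argument.
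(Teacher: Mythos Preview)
This lemma is not proved in the present paper: it is quoted verbatim from \cite[Lemma~5.4]{S}, so there is no proof here against which to compare your proposal. I therefore comment on the proposal on its own merits.

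Your construction of $\mathcal L$ as the family of ``stabilization chains'' inside the witnessing system $\mathcal D$ is reasonable, and the claims that $\mathcal L$ is again $\aleph_1$-dense and consists of strict $C$-stationary submodules are correct for the reasons you indicate.

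The genuine gap is in the verification of $(\dagger)$. Already in the single-$N$ case your ``correcting $g_{n+1}$ so that it agrees with $g_n$ on $D_n$'' is idle (both already restrict to $f\restriction D_n$), and the ``passage to a coherent limit'' does not exist: for $m\in M\setminus N$ the sequence $(g_n(m))_n$ has no reason to stabilize. For the general directed union you then propose a transfinite induction and yourself flag the limit step at uncountable cofinality as unresolved. That obstacle is real for the route you chose, and the route is not the right one.

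The argument that actually works bypasses induction entirely and uses pure-injectivity in its guise as \emph{algebraic compactness}. Fix generators $(m_j)_{j\in J}$ of $M$; then an extension $\tilde f\colon M\to C$ of $f$ amounts to a family $(c_j)\in C^J$ satisfying the (finitary) $R$-linear relations presenting $M$ together with, for each $n\in N$ written as $n=\sum r_j m_j$, the equation $\sum r_j c_j=f(n)$. Any finite subsystem involves only finitely many elements of $N$; by directedness these lie in some $D_m$ inside some $N_i\in\mathcal L$, and by the very construction of $\mathcal L$ the map $f\restriction D_m$ lies in the image of $\Hom_R(M,C)\to\Hom_R(D_m,C)$, furnishing a solution of that finite subsystem. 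Algebraic compactness of $C$ then yields a global solution, i.e.\ the desired $\tilde f$. This single compactness step handles all directed unions at once---countable and uncountable alike---with no transfinite bookkeeping.
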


In the lemma below, $X^c$ stays for the usual character module of $X\in\ModR$, i.e. $X^c = \Hom_{\mathbb Z}(X,\mathbb Q/\mathbb Z)\in\RMod$.

\begin{lem} \label{l:kerepi} \cite[Lemma 4.4]{S} Let $B$ be a module and $0\to N 
%\overset{\mu}
\to A\to M\to 0$ a~short exact sequence of modules such that  $M\in{}^\perp (B^{(I)})^{cc}$ for each set $I$. Then $N$ is strict $B$-stationary if so is $A$. 
\end{lem}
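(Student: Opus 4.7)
The plan is to verify strict $B$-stationarity of $N$ directly by using the Ext-vanishing hypothesis on $M$ to lift homomorphisms from $N$ to $A$ with values in the pure-injective modules $(B^{(I)})^{cc}$, and then to transport the corresponding stationarity condition of $A$ along this lifting.

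First I would apply the contravariant functor $\Hom_R(-,(B^{(I)})^{cc})$ to the short exact sequence $0 \to N \to A \to M \to 0$. The hypothesis $M \in {}^\perp (B^{(I)})^{cc}$ gives $\Ext^1_R(M,(B^{(I)})^{cc}) = 0$, so the induced sequence
\[
0 \to \Hom_R(M,(B^{(I)})^{cc}) \to \Hom_R(A,(B^{(I)})^{cc}) \to \Hom_R(N,(B^{(I)})^{cc}) \to 0
\]
is short exact for every set $I$. In particular, every homomorphism $f : N \to (B^{(I)})^{cc}$ extends to some $\tilde f : A \to (B^{(I)})^{cc}$. Because $B^{(I)}$ embeds purely into the pure-injective module $(B^{(I)})^{cc}$, strict $B$-stationarity can be tested via the restriction images $\Hom_R(-,(B^{(I)})^{cc}) \to \Hom_R(X_0,(B^{(I)})^{cc})$ ranging over finitely generated submodules $X_0$, uniformly in $I$; this is the reformulation I would invoke from \cite{S}.

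Now, given a finitely generated $N_0 \subseteq N$, I would view $N_0$ as a submodule of $A$ and apply strict $B$-stationarity of $A$ to obtain a finitely presented submodule $N_0 \subseteq A_1 \subseteq A$ witnessing stationarity on $N_0$ inside $A$. Setting $N_1 = A_1 \cap N$, one observes that $A_1/N_1$ embeds into $M$ as a finitely generated submodule, and suitable enlargements of $A_1$ inside $A$ allow one to arrange $N_1$ finitely presented in $N$. The key point is that any $f \in \Hom_R(N,(B^{(I)})^{cc})$ lifts to some $\tilde f \in \Hom_R(A,(B^{(I)})^{cc})$ by the first step, and the image of $\tilde f|_{N_0}$ already lies in the image of $\Hom_R(A_1,(B^{(I)})^{cc}) \to \Hom_R(N_0,(B^{(I)})^{cc})$ by choice of $A_1$. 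Restricting back through $N_1 \hookrightarrow A_1$ realizes $f|_{N_0}$ as the restriction of a map defined on $N_1$, yielding the stationarity witness for $N$ at $N_0$.

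The main obstacle I anticipate is ensuring that $N_1$ can be taken finitely presented in $N$ rather than merely finitely generated; this requires an enlargement argument using that $M$ (and hence the quotient $A_1/N_1 \hookrightarrow M$) is filtered by finitely presented pieces, together with standard kernel-tracking of relations along $N \hookrightarrow A$. A secondary delicate point is uniformity in the set $I$: since the lift $\tilde f$ is chosen separately for each $f$, the construction of $A_1$ (and hence $N_1$) must not depend on $I$, which is exactly what "strict" stationarity affords us when one tests against the biduals $(B^{(I)})^{cc}$ rather than against $B$ itself.
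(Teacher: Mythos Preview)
This lemma is not proved in the present paper; it is quoted from the companion paper \cite[Lemma~4.4]{S}, so there is no in-paper argument against which to compare your proposal.

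On the substance of your sketch: the global strategy is the right one and is essentially how such statements are established. Applying $\Hom_R(-,(B^{(I)})^{cc})$ to the short exact sequence and using $\Ext^1_R(M,(B^{(I)})^{cc})=0$ to lift every $f:N\to (B^{(I)})^{cc}$ to some $\tilde f:A\to (B^{(I)})^{cc}$ is exactly the mechanism that makes the restriction map $\Hom_R(A,(B^{(I)})^{cc})\to\Hom_R(N,(B^{(I)})^{cc})$ surjective, and that surjectivity is what allows the strict-stationarity data for $A$ to be pushed down to $N$.

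Where your argument has a genuine gap is in the transfer step. Strict $B$-stationarity is formulated in terms of a direct system of finitely presented modules \emph{equipped with maps} to $A$ (respectively $N$), not in terms of finitely presented \emph{submodules}. The witness that stationarity of $A$ hands you is a finitely presented $G$ together with a factorisation $F\to G\to A$ of a given $F\to A$; there is no reason the map $G\to A$ is monic, so your $A_1$ need not be a submodule of $A$ at all, and the intersection $N_1=A_1\cap N$ is not available in the required generality. Even when it is, nothing in the hypotheses guarantees that $N_1$ (or any enlargement of it inside $A$) is finitely presented: your proposed fix invokes ``$M$ filtered by finitely presented pieces'', but no such assumption on $M$ is present. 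The clean repair is to stay in the map-based formulation throughout: for a finitely presented $F$ with $u:F\to N$, view $u$ as a map into $A$, take the witness $F\to G\to A$ there, and use the surjectivity from your first step to see that the image of $\Hom_R(N,(B^{(I)})^{cc})$ in $\Hom_R(F,(B^{(I)})^{cc})$ coincides with that of $\Hom_R(A,(B^{(I)})^{cc})$, hence is already stabilised by $F\to G$. No intersection with $N$ is needed, and the finite-presentation obstacle disappears.
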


Finally, we need a variant of \cite[Proposition 2.7]{SS}:

\begin{prop} \label{p:pureinprod} Let $\mathcal G$ be a class of modules and $M$ a countably presented module such that $M\in {}^\perp \mathcal G$. Then the following is equivalent:
\begin{enumerate}
\item $M\in {}^\perp D$ for each module $D$ isomorphic to a pure submodule of a product of modules from $\mathcal G$;
\item $M\in {}^\perp D$ for each module $D$ isomorphic to a countable direct sum of modules from $\mathcal G$;
\item $M$ is $\mathcal G$-stationary.
\end{enumerate}
\end{prop}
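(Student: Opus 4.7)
The plan is to establish the cycle of implications $(1) \Rightarrow (2) \Rightarrow (3) \Rightarrow (1)$.

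The implication $(1) \Rightarrow (2)$ is immediate, since for any countable family $(G_n)_{n<\omega}$ in $\mathcal G$ the canonical inclusion $\bigoplus_{n<\omega} G_n \hookrightarrow \prod_{n<\omega} G_n$ is pure. Indeed, this inclusion is the direct limit of the split embeddings $\bigoplus_{n<N} G_n \hookrightarrow \prod_{n<\omega} G_n$, and a direct limit of split embeddings is pure.

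For $(2) \Rightarrow (3)$, I would fix a presentation of $M$ as the direct limit of a chain $F_0 \xrightarrow{f_0} F_1 \xrightarrow{f_1} \cdots$ of finitely presented modules and employ the telescope short exact sequence
$$0 \longrightarrow \bigoplus_{n<\omega} F_n \xrightarrow{\phi} \bigoplus_{n<\omega} F_n \longrightarrow M \longrightarrow 0,$$
where $\phi$ is induced by the differences $x \mapsto x - f_n(x)$ on each summand $F_n$. Applying $\Hom_R(-,X)$ relates elements of $\Ext^1(M,X)$ to the cokernel of the map on $\prod_n \Hom_R(F_n,X)$ induced by $\phi$, which encodes precisely the failure of the Mittag-Leffler condition for the inverse system $(\Hom_R(F_n,X))_n$. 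Assuming $\mathcal G$-stationarity fails, I would choose a sequence $(G_n)_{n<\omega} \subseteq \mathcal G$ witnessing this failure and combine the witness with $X = \bigoplus_n G_n$ to produce a non-zero element of $\Ext^1(M, \bigoplus_n G_n)$, contradicting~(2).

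For $(3) \Rightarrow (1)$, let $D$ be a pure submodule of $P := \prod_{i \in I} G_i$ with $G_i \in \mathcal G$, and suppose $0 \to D \to E \to M \to 0$ represents an element of $\Ext^1(M,D)$. Pushing out along $D \hookrightarrow P$ yields a short exact sequence
$$0 \to P \to E' \to M \to 0$$
which splits, since $\Ext^1(M,P) \cong \prod_i \Ext^1(M,G_i) = 0$ by the hypothesis $M \in {}^\perp \mathcal G$. The resulting retraction produces a map $\rho \colon E \to P$ extending the original inclusion of $D$. To conclude that the original sequence splits, it suffices to lift the induced map $\sigma \colon M \to P/D$ through the surjection $P \to P/D$. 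Writing $M = \varinjlim_n F_n$ and using purity of $D$ in $P$, each composition $\sigma \circ (F_n \to M)$ lifts to some $\tilde\sigma_n \colon F_n \to P$; the $\mathcal G$-stationarity of $M$ then provides the Mittag-Leffler coherence needed to modify these partial lifts within their $D$-cosets into a compatible family, whose direct limit supplies the desired global lift and forces $\Ext^1(M,D)=0$.

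The main obstacle is the assembly step in $(3) \Rightarrow (1)$: transforming incompatible partial lifts $\tilde\sigma_n$ into a coherent sequence. The Mittag-Leffler condition makes the discrepancies eventually stable after suitable $D$-valued adjustments, but executing this rigorously requires an inductive construction with careful bookkeeping of the correction terms at each level, which is precisely the technical content of the variant in \cite[Proposition 2.7]{SS}.
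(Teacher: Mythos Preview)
The paper does not supply its own proof of this proposition: it is stated as ``a variant of \cite[Proposition~2.7]{SS}'' and simply imported. So there is nothing in the paper to compare your argument against beyond that citation, and your proposal is in effect a sketch of what the cited proof does.

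Your cycle $(1)\Rightarrow(2)\Rightarrow(3)\Rightarrow(1)$ and the use of the telescope presentation of $M$ to identify $\varprojlim^1\Hom_R(F_n,-)$ inside $\Ext^1_R(M,-)$ is the standard route, and your handling of $(1)\Rightarrow(2)$ and $(2)\Rightarrow(3)$ is fine. One remark on $(2)\Rightarrow(3)$: the reason a \emph{countable direct sum} $\bigoplus_n G_n$ is genuinely needed, rather than a single $G\in\mathcal G$, is that $\mathcal G$-stationarity is the \emph{uniform} Mittag-Leffler condition (one $m$ working for all $G\in\mathcal G$ simultaneously); you implicitly use this when you speak of ``a sequence $(G_n)$ witnessing the failure'', so this is consistent.

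For $(3)\Rightarrow(1)$ your reduction is correct: after pushing out along $D\hookrightarrow P=\prod_i G_i$ and using $\Ext^1_R(M,P)=0$, the vanishing of $\Ext^1_R(M,D)$ becomes the surjectivity of $\varprojlim\Hom_R(F_n,P)\to\varprojlim\Hom_R(F_n,P/D)$, and via the six-term $\varprojlim$-sequence (the levelwise surjectivity coming from purity and finite presentation of the $F_n$) this amounts to $\varprojlim^1\Hom_R(F_n,D)=0$. The substantive point, which you correctly flag as ``the technical content of \cite[Proposition~2.7]{SS}'', is that uniform $\mathcal G$-stationarity of $M$ forces $M$ to be $D$-stationary as well --- equivalently, the class of modules $X$ for which $M$ is $X$-stationary is closed under products and pure submodules. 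Your description of an inductive correction of the partial lifts $\tilde\sigma_n$ by $D$-valued terms is one way to carry this out; either way, nothing is missing from your plan beyond the bookkeeping you already defer to \cite{SS}.
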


\section{Closed cotorsion pairs}
\label{sec:closed}

In this section, we will characterize the tilting cotorsion pairs $\mathfrak C = (\mathcal A,\mathcal B)$ such that $\mathfrak C$ is closed, that is, $\varinjlim \mathcal A = \mathcal A$. In fact, in Theorem~\ref{t:bigchar1} we will go far beyond the tilting setting: we will not require $\mathfrak C$ to be hereditary or $\mathcal A$ to have bounded projective dimension. Further characterizations for the closure of $\mathfrak C$ will be given later in Theorem~\ref{t:bigchar2} and Corollary~\ref{c:P}.

First, we recall the following important result going back to Hill (in the form presented in \cite[Theorem 7.10]{GT}, for example):
 
\begin{lem} \label{l:hill}
Let $\lambda$ be a regular infinite cardinal. Let $\mathcal S$ be a class of $< \lambda$-presented modules and $M$ a module possessing an $\mathcal S$-filtration $(M_\alpha \mid \alpha\leq\sigma)$. Then there is a family $\mathcal F$ of submodules of $M$ such that:
\begin{enumerate}
\item $M_\alpha \in \mathcal F$ for all $\alpha\leq\sigma$.
\item $\mathcal F$ is closed under arbitrary sums and intersections.
\item For each $N,P \in \mathcal F$ \st $N \subseteq P$, the module $P/N$ is $\mathcal S$-filtered.
\item For each $N \in \mathcal F$ and a subset $X \subseteq M$ of cardinality $< \lambda$, there is $P \in \mathcal F$ \st $N \cup X \subseteq P$ and $P/N$ is $< \lambda$-presented.\qed
\end{enumerate}
\end{lem}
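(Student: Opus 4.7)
The plan is to realize $\mathcal{F}$ as the image of a complete lattice of ``closed'' subsets of the index set $\sigma$. For each $\alpha < \sigma$ I first choose a subset $A_\alpha \subseteq M_{\alpha+1}$ of cardinality $<\lambda$ whose image generates $M_{\alpha+1}/M_\alpha$. Since $M_{\alpha+1}/M_\alpha \in \mathcal{S}$ is $<\lambda$-presented, the submodule $\langle A_\alpha\rangle \cap M_\alpha$, i.e.\ the kernel of the surjection $\langle A_\alpha\rangle \to M_{\alpha+1}/M_\alpha$, is $<\lambda$-generated. Writing each of its generators as a finite sum of elements from previously chosen $A_\beta$'s and collecting the indices that appear, I obtain a $<\lambda$-sized subset $B_\alpha \subseteq \alpha$ satisfying $\langle A_\alpha\rangle \cap M_\alpha \subseteq \sum_{\beta\in B_\alpha}\langle A_\beta\rangle$.

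Next, call $S \subseteq \sigma$ \emph{closed} if $\alpha \in S$ implies $B_\alpha \subseteq S$; such sets form a complete sublattice of $\mathcal{P}(\sigma)$ under arbitrary unions and intersections. For closed $S$ define $N_S := \sum_{\alpha\in S}\langle A_\alpha\rangle$ and put $\mathcal{F} := \{N_S : S \text{ closed}\}$. Property~(1) follows because each initial segment $\alpha = \{\beta : \beta < \alpha\}$ is closed, and a transfinite induction gives $N_\alpha = M_\alpha$. The identity $N_{S\cup T} = N_S + N_T$ is immediate; the companion identity $N_{S\cap T} = N_S \cap N_T$, which completes~(2), is the place where the witness sets $B_\alpha$ are used essentially. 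For~(4), iteratively apply $\alpha\mapsto B_\alpha$ starting from $S\cup X$; regularity of $\lambda$ ensures that the process stabilizes in a closed set $P\supseteq S\cup X$ with $|P\setminus S|<\lambda$, and then (3) (applied to $N_S \subseteq N_P$) shows $N_P/N_S$ is $<\lambda$-presented.

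The main obstacle is property~(3). Given $N\subseteq P$ in $\mathcal{F}$, write $N=N_S$, $P=N_T$ and, after replacing $S$ by $S\cap T$ (permissible by the intersection identity), assume $S\subseteq T$ with both closed. Enumerate $T\setminus S=\{\alpha_\xi:\xi<\tau\}$ in increasing ordinal order and set $S_\xi := S\cup\{\alpha_{\xi'}:\xi'<\xi\}$; a short induction shows each $S_\xi$ remains closed, because any $\beta\in B_{\alpha_\xi}$ satisfies $\beta<\alpha_\xi$ and $\beta\in T$, so either $\beta\in S$ or $\beta = \alpha_{\xi'}$ for some $\xi'<\xi$. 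The heart of the argument is then to verify that the inclusion $\langle A_{\alpha_\xi}\rangle\hookrightarrow N_{S_{\xi+1}}$ composed with the projection onto $N_{S_{\xi+1}}/N_{S_\xi}$ induces an isomorphism $M_{\alpha_\xi+1}/M_{\alpha_\xi}\cong N_{S_{\xi+1}}/N_{S_\xi}$. Surjectivity is clear; injectivity reduces to the equality $\langle A_{\alpha_\xi}\rangle\cap N_{S_\xi} = \langle A_{\alpha_\xi}\rangle\cap M_{\alpha_\xi}$, where the inclusion $\supseteq$ follows from $B_{\alpha_\xi}\subseteq S_\xi$, while the reverse direction is the delicate point: it requires a careful bookkeeping of how elements of $\langle A_\beta\rangle$ for $\beta\in S$ with $\beta>\alpha_\xi$ can combine inside $\langle A_{\alpha_\xi}\rangle$, and is ultimately controlled by the witness property of the $B_\beta$'s for such $\beta$. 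Once this is in place, $(N_{S_\xi}/N_S : \xi\leq\tau)$ is the required $\mathcal{S}$-filtration of $P/N$, completing the proof.
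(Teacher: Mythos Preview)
The paper does not give a proof: the lemma is stated as a known result (the Hill Lemma), cited from \cite[Theorem~7.10]{GT}, and closes with \qed. Your sketch is exactly the standard construction found in that reference --- build $\mathcal{F}$ from ``closed'' subsets of $\sigma$ via witness sets $B_\alpha$ and set $\mathcal{F}=\{N_S:S\text{ closed}\}$ --- and is correct in outline, so there is nothing to compare against.

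Two small remarks on the write-up. In your argument for (4) you write ``starting from $S\cup X$'', but $X\subseteq M$, not $X\subseteq\sigma$; one must first express each $x\in X$ as a finite sum of elements from various $\langle A_\beta\rangle$ and seed the closure process with the resulting $<\lambda$-sized set of indices. For (3), the ``delicate point'' you flag is most cleanly packaged as the single identity $N_S\cap M_\alpha=N_{S\cap\alpha}$ for every closed $S$ and every $\alpha\le\sigma$, proved by transfinite induction on $\alpha$; once this is available, both the intersection formula $N_S\cap N_T=N_{S\cap T}$ needed in (2) and the inclusion $\langle A_{\alpha_\xi}\rangle\cap N_{S_\xi}\subseteq M_{\alpha_\xi}$ needed in (3) follow directly, so you need not treat them separately.
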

\medskip

We will also capitalize on a useful description of the class of pure-epimorphic images of modules from $\mathcal A$, cf.~\cite[Lemmas 8.38 and 8.39]{GT} and \cite[Proposition 5.12]{SS}.

\begin{lem}\label{l:tilde} Let $R$ be a ring and $\mathfrak C=(\mathcal A,\mathcal B)$ a cotorsion pair in $\ModR$ \st $\mathcal B$ is closed under direct limits, and let $\mathcal B'$ be the class of all pure-injective modules  from $\mathcal B$. Then ${}^\perp \mathcal B'$ coincides with the class $\tilde{\mathcal A}$ of all pure-epimorphic images of modules from $\mathcal A$.
\end{lem}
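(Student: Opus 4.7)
The plan is to prove each containment separately; the inclusion $\tilde{\mathcal A}\subseteq{}^\perp\mathcal B'$ is a formal consequence of pure-injectivity, while the reverse requires upgrading the special $\mathcal A$-precover into a pure exact sequence.

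For $\tilde{\mathcal A}\subseteq{}^\perp\mathcal B'$, take any $M\in\tilde{\mathcal A}$; then there is a pure exact sequence $0\to K\to A\to M\to 0$ with $A\in\mathcal A$. Applying $\Hom_R(-,B')$ for any pure-injective $B'\in\mathcal B'$ yields a long exact sequence in which $\Hom_R(A,B')\to\Hom_R(K,B')$ is surjective (by pure-injectivity of $B'$ and purity of $K\hookrightarrow A$) and $\Ext^1_R(A,B')=0$ (since $A\in\mathcal A$ and $B'\in\mathcal B$), so $\Ext^1_R(M,B')=0$.

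For the converse ${}^\perp\mathcal B'\subseteq\tilde{\mathcal A}$, let $M\in{}^\perp\mathcal B'$. Since $\mathcal B$ is closed under direct limits, by \cite{S} the cotorsion pair $\mathfrak C$ is complete, so a special $\mathcal A$-precover yields a short exact sequence $0\to B\to A\to M\to 0$ with $A\in\mathcal A$ and $B\in\mathcal B$. The next step is to show that this sequence is automatically pure, which realizes $M$ as a pure epimorphic image of $A\in\mathcal A$. To this end, pick an elementary cogenerator $C\in\mathcal B$ furnished by \cite[Lemma 5.3]{S} together with a pure embedding $j\colon B\hookrightarrow C^I$ for some set $I$. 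Then $C^I$ is pure-injective and lies in $\mathcal B$ by closure of $\mathcal B$ under direct products, so $C^I\in\mathcal B'$ and $\Ext^1_R(M,C^I)=0$. Consequently, the long exact sequence obtained by applying $\Hom_R(-,C^I)$ to $0\to B\to A\to M\to 0$ extends $j$ to a morphism $f\colon A\to C^I$. For any left $R$-module $N$, the factorization $j\otimes_R N=(f\otimes_R N)\circ(B\otimes_R N\to A\otimes_R N)$ forces $B\otimes_R N\to A\otimes_R N$ to be injective, since $j\otimes_R N$ is injective by purity of $j$. Hence $B\subseteq A$ is pure.

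The main obstacle, already resolved in \cite{S}, is to have both a special $\mathcal A$-precover for $M$ and a pure-injective member of $\mathcal B$ into which $B$ purely embeds; the remainder is the standard extension-by-pure-injectivity trick. Once these two ingredients are quoted, no further difficulty should arise.
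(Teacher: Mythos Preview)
Your proof is correct. The paper does not supply its own argument for this lemma; it merely records the statement and points to \cite[Lemmas~8.38 and 8.39]{GT} and \cite[Proposition~5.12]{SS}. Your argument is precisely the standard one underlying those references: the inclusion $\tilde{\mathcal A}\subseteq{}^\perp\mathcal B'$ is the formal long-exact-sequence computation using pure-injectivity, and for the reverse inclusion you take a special $\mathcal A$-precover (available since $\mathfrak C$ is complete by \cite[Theorem~6.1]{S}) and upgrade it to a pure exact sequence by extending the pure embedding $B\hookrightarrow C^I$ across $A$. The only minor remark is that you do not actually need the full strength of an elementary cogenerator here---any pure embedding of $B$ into a pure-injective member of $\mathcal B$ suffices, and the pure-injective hull $PE(B)$ already works once one knows $\mathcal B$ is definable (again by \cite{S})---but invoking the elementary cogenerator is perfectly fine and is how the paper packages this tool elsewhere.
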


Having a cotorsion pair $(\mathcal A, \mathcal B)$ with $\mathcal B = \varinjlim\mathcal B$, the next two lemmas imply that any pure-epimorphic image of a module from $\mathcal A$ is the direct limit of a direct system of Bass modules over $\mathcal A ^{\leq\omega}$.

\begin{lem} \label{l:limpure-epi} Let $(\mathcal A, \mathcal B)$ be a cotorsion pair with $\mathcal B = \varinjlim\mathcal B$. Denote by $\tilde{\mathcal A}$ the class of all pure-epimorphic images of modules from $\mathcal A$. Then $\tilde{\mathcal A} = \varinjlim {\tilde{\mathcal A}}^{\leq\omega}$.
\end{lem}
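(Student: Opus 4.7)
The plan is to establish the two inclusions separately. For $\varinjlim\tilde{\mathcal A}^{\leq\omega}\subseteq\tilde{\mathcal A}$, it suffices to check that $\tilde{\mathcal A}$ is itself closed under direct limits. By Lemma~\ref{l:tilde}, $\tilde{\mathcal A}={}^\perp\mathcal B'$, where $\mathcal B'$ is the class of pure-injective modules in $\mathcal B$. For a pure-injective $C$, the class ${}^\perp C$ is closed under direct limits: a directed system $(M_\alpha)_\alpha$ fits in the canonical pure-exact sequence $0\to K\to\bigoplus_\alpha M_\alpha\to\varinjlim_\alpha M_\alpha\to 0$, and since $C$ is pure-injective, $\Hom_R(-,C)$ remains exact on it, giving an injection $\Ext^1_R(\varinjlim_\alpha M_\alpha,C)\hookrightarrow\Ext^1_R(\bigoplus_\alpha M_\alpha,C)=\prod_\alpha\Ext^1_R(M_\alpha,C)$; the right-hand side vanishes whenever each $M_\alpha\in{}^\perp C$. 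Intersecting over $C\in\mathcal B'$ completes the verification.

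For the reverse inclusion I aim for the stronger containment $\tilde{\mathcal A}\subseteq\varinjlim\mathcal A^{\leq\omega}$; the desired conclusion will then follow from the trivial inclusion $\mathcal A^{\leq\omega}\subseteq\tilde{\mathcal A}^{\leq\omega}$. First I show $\mathcal A\subseteq\varinjlim\mathcal A^{\leq\omega}$. Since $\mathcal B$ is closed under direct limits, the cotorsion pair $(\mathcal A,\mathcal B)$ is of countable type by the results of~\cite{S}, so every $A\in\mathcal A$ admits an $\mathcal A^{\leq\omega}$-filtration. Applying Hill's Lemma~\ref{l:hill} with $\lambda=\aleph_1$ produces a family $\mathcal F$ of submodules of $A$ closed under sums and intersections; the countably presented members of $\mathcal F$ lie in $\mathcal A^{\leq\omega}$ (each is $\mathcal A^{\leq\omega}$-filtered by property~(3) with $N=0$, and $\mathcal A$ is closed under transfinite extensions by Eklof's lemma), form a directed subfamily by property~(2), and exhaust $A$ by property~(4). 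Hence $A\in\varinjlim\mathcal A^{\leq\omega}$.

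The last step invokes the classical Lenzing-type characterization: for any class $\mathcal C$ closed under finite direct sums, $\varinjlim\mathcal C$ coincides with the class of pure-epimorphic images of arbitrary direct sums of modules from $\mathcal C$. In particular, since the composition of two pure epimorphisms is again a pure epimorphism, $\varinjlim\mathcal C$ is closed under pure-epimorphic images. Applied to $\mathcal C=\mathcal A^{\leq\omega}$ and combined with $\mathcal A\subseteq\varinjlim\mathcal A^{\leq\omega}$, any $M\in\tilde{\mathcal A}$---being a pure-epimorphic image of some $A\in\mathcal A\subseteq\varinjlim\mathcal A^{\leq\omega}$---must belong to $\varinjlim\mathcal A^{\leq\omega}$, completing the proof. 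The principal nontrivial ingredient is the Lenzing-type identification of $\varinjlim\mathcal C$ with pure-epimorphic images of direct sums; the remainder of the argument is a formal chain of inclusions built from the lemmas already in the paper.
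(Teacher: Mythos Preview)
Your proof has a gap in the last step. The ``classical Lenzing-type characterization'' you invoke --- that for \emph{any} class $\mathcal C$ closed under finite direct sums, $\varinjlim\mathcal C$ coincides with the class of pure-epimorphic images of direct sums from $\mathcal C$ --- is Lenzing's theorem only when $\mathcal C$ consists of \emph{finitely presented} modules. That hypothesis is what makes the canonical slice category $\mathcal C/M$ filtered and, equivalently, what guarantees that a map from an object of $\mathcal C$ into a directed colimit factors through some stage. For $\mathcal C=\mathcal A^{\leq\omega}$, whose members are merely countably presented, this factorization property is unavailable, and neither the identification with pure quotients of direct sums nor the consequent closure of $\varinjlim\mathcal A^{\leq\omega}$ under pure-epimorphic images follows. (The correct analogue for $<\aleph_1$-presented modules involves $\aleph_1$-directed colimits and $\aleph_1$-pure epimorphisms, which is not what you need here.) So the chain of inclusions breaks precisely at the point you flag as ``the principal nontrivial ingredient.''

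The paper avoids this obstacle by never leaving the specific cotorsion-pair setting: it uses the description $\tilde{\mathcal A}={}^\perp C$ for an elementary cogenerator $C$ of $\mathcal B$, takes a free presentation $0\to K\to R^{(X)}\to F\to 0$ of a given $F\in\tilde{\mathcal A}$, observes via Lemma~\ref{l:kerepi} that $K$ is strict $C$-stationary, and then applies Lemma~\ref{l:Cext} to produce an $\aleph_1$-dense system $\mathcal L$ of submodules of $K$. The quotients $R^{(Y)}/L$ for countable $Y\subseteq X$ and $L\in\mathcal L$ with $L\subseteq R^{(Y)}$ then form a directed system in $\tilde{\mathcal A}^{\leq\omega}$ with colimit $F$. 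Thus the directed system is built by hand from the Mittag-Leffler machinery, not deduced from a general categorical principle. Your Hill-Lemma argument for $\mathcal A\subseteq\varinjlim\mathcal A^{\leq\omega}$ and your verification that $\tilde{\mathcal A}$ is closed under direct limits are both correct, but they do not suffice to close the gap.
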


\begin{proof} Denote by $C$ an elementary cogenerator for the class $\mathcal B$. From Lemma~\ref{l:tilde} and the properties of $C$, we get $\tilde{\mathcal A} = {}^\perp C$. Let $F\in \tilde{\mathcal A}$ be a (non-countably presented) module. Take any presentation $$\begin{CD} 0 @>>> K @>{\subseteq}>> R^{(X)} @>>> F @>>> 0 \end{CD}$$ with $X$ infinite. Then $K$ is (strict) $C$-stationary by Lemma~\ref{l:kerepi}.

Let $\mathcal L$ be an $\aleph_1$-dense system from Lemma~\ref{l:Cext}. Put $\mathcal I = \{(L,Y)\in\mathcal L\times [X]^\omega \mid L\subseteq R^{(Y)}\}$, where $[X]^\omega$ denotes the set of all countable subsets of $X$. Then $\mathcal I$, together with inclusion in both coordinates, is a directed poset. The module $F$ is the direct limit of the induced direct system $(R^{(Y)}/L \mid (L,Y)\in\mathcal I)$.

It follows from the properties of the system $\mathcal L$ that $R^{(Y)}/L\in\tilde{\mathcal A}$ whenever $(L,Y)\in \mathcal I$. Indeed, $\tilde{\mathcal A} = {}^\perp C$ and any homomorphism $h$ from $L\in\mathcal L$ to $C$ can be extended to an element of $\Hom_R(K,C)$, and then further to a homomorphism $R^{(X)}\to C$. The restriction of the latter map to $R^{(Y)}$ (where $L\subseteq R^{(Y)}$) is an extension of $h$, whence $R^{(Y)}/L\in {}^\perp C$.
\end{proof}

Notice that the direct system in the proof above is even \emph{$\aleph_1$-continuous}, i.e. it is closed under taking direct limits of its countable direct subsystems.

\begin{lem} \label{l:countlim} Let $\mathfrak C = (\mathcal A, \mathcal B)$ be a complete cotorsion pair with $\mathcal B$ closed under countable direct limits. Assume that $M$ is a countably presented pure-epimorphic image of a module from $\mathcal A$. 

Then there exists a direct system $(F_n, f_{nm} \mid m \leq n<\omega)$ of finitely presented modules \st $M = \varinjlim_{n<\omega}F_n$, and $f_{n+1,n}$ factors through a module from $\mathcal A$ for each $n<\omega$. In particular, $M$ is a countable direct limit of modules from $\mathcal A$.
 
If moreover $\mathfrak C$ is of countable type (finite type), then $M$ is a Bass module over $\mathcal A ^{\leq \omega}$ ($\mathcal A ^{< \omega}$).  
\end{lem}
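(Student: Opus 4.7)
The plan is to write $M$ as a countable direct limit of finitely presented modules and to route each transition through a module from $\mathcal{A}$ of suitable size, combining the given pure epimorphism $\pi\colon \tilde{A}\to M$ (with $\tilde{A}\in\mathcal{A}$) with Hill's lemma applied to an appropriate filtration of $\tilde{A}$.

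First, since $M$ is countably presented, write $M = \varinjlim_{n<\omega} G_n$ with each $G_n$ finitely presented and transitions $g_n\colon G_n\to G_{n+1}$. Because $\pi$ is pure and each $G_n$ is fp, every $G_n\to M$ lifts to some $\phi_n\colon G_n\to\tilde{A}$. In the finite type case $\tilde{A}$ has an $\mathcal{A}^{<\omega}$-filtration, so Lemma~\ref{l:hill} provides a family $\mathcal{F}$ of fp submodules of $\tilde{A}$ lying in $\mathcal{A}^{<\omega}$, closed under sums, such that every finite subset of $\tilde{A}$ is contained in a member of $\mathcal{F}$. Each $\phi_n$ then factors through some $A_n\in\mathcal{F}$, and since $A_n$ is fp, the induced map $A_n\to M$ factors further through some $G_{k(n)}$.

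The $F_n$'s can now be built inductively as $F_n = G_{k_n}$ for an increasing sequence of indices. At step $n$, pick $k_{n+1}\ge k(n)$ so that the two maps $F_n\to G_{k_{n+1}}$ --- namely the natural transition $g_{k_{n+1},k_n}$ and the composition $F_n\to A_n\to G_{k(n)}\to G_{k_{n+1}}$ --- coincide; this is possible because the two maps already agree after composing with $G_{k_{n+1}}\to M$, and by the colimit property of $M=\varinjlim G_k$ they must then agree after some further transition in the $G_k$-chain. This yields the factorization $f_{n+1,n}\colon F_n\to A_n\to F_{n+1}$ through $A_n\in\mathcal{A}^{<\omega}$, and cofinality gives $\varinjlim F_n = M$. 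The countable type case proceeds in parallel, applying Lemma~\ref{l:hill} to an $\mathcal{A}^{\le\omega}$-filtration of $\tilde{A}$; since the resulting $A_n$ is now only countably presented, one first works with an fp submodule $A_n^{(0)}\subseteq A_n$ containing the image of $F_n$ (for which the usual colimit matching still applies) and then recovers a factorization through the full csp $A_n\in\mathcal{A}^{\le\omega}$ by a suitable extension.

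The ``In particular'' statement then follows at once: the induced chain $A_n\to F_{n+1}\to A_{n+1}$ is cofinal with the $F_n$-chain, so $\varinjlim A_n = M$, realizing $M$ as a countable direct limit of modules in $\mathcal{A}$. The Bass module statements are then immediate from the sizes of the $A_n$. The main obstacle will be the countable type case: since $A_n$ is csp rather than fp, extending the factorization from the fp submodule $A_n^{(0)}$ to all of $A_n$ requires care, potentially involving an enlargement of $F_{n+1}$ within the $G_k$-chain to absorb the difference between the lift produced on $A_n^{(0)}$ and a chosen global one on $A_n$.
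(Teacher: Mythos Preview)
Your approach is genuinely different from the paper's, and it has two gaps.

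\textbf{The general case is not addressed.} The first conclusion of the lemma (transitions of an fp system factor through $\mathcal A$, and $M$ is a countable direct limit of modules from $\mathcal A$) is stated under no type hypothesis---only completeness of $\mathfrak C$ and closure of $\mathcal B$ under countable direct limits. You immediately invoke Hill's lemma on a filtration of $\tilde A$, but without a type assumption there is no reason $\tilde A$ should carry an $\mathcal A^{<\omega}$- or $\mathcal A^{\le\omega}$-filtration, so this first part of the lemma is simply missing from your plan. (Even in the finite type case, $\tilde A$ is a priori only a \emph{direct summand} of an $\mathcal A^{<\omega}$-filtered module; this is easily repaired by enlarging $\tilde A$, but should be said.)

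\textbf{The countable type case does not go through.} Your own worry at the end is justified. With $A_n$ only countably presented, you need a map $A_n\to F_{n+1}$ into a \emph{finitely} presented module, compatible with the colimit maps to $M$. Producing such a map on an fp submodule $A_n^{(0)}$ is fine, but there is no mechanism to extend it to all of $A_n$: the countably many generators of $A_n$ would each require passing to a further $G_k$, and no single fp $F_{n+1}$ can absorb all of them. Enlarging $F_{n+1}$ within the $G_k$-chain does not help, since the process does not terminate at a finite stage. Without a map $A_n\to F_{n+1}$ you get neither the factorization $f_{n+1,n}\colon F_n\to A_n\to F_{n+1}$ nor a direct system on the $A_n$'s whose limit is $M$ (the inclusions $A_n\subseteq A_{n+1}$ inside $\tilde A$ have limit a submodule of $\tilde A$, not $M$).

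\textbf{What the paper does instead.} Rather than lifting into a fixed $\tilde A$, the paper takes special $\mathcal A$-precovers $\pi_n\colon A_n\to F_n$ and shows that $\pi=\varinjlim\pi_n$ is a pure epimorphism: its kernel lies in $\mathcal B$ (as a countable direct limit of kernels in $\mathcal B$), so the given pure epimorphism $\tilde A\to M$ factors through $\pi$, forcing $\pi$ to be pure. Then each $f_{n+1,n}$ lifts (after thinning the system) to $\nu_n\colon F_n\to A_{n+1}$, giving simultaneously the factorizations $f_{n+1,n}=\pi_{n+1}\nu_n$ and the system $(A_n,\nu_n\pi_n)$ with limit $M$. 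Only \emph{afterwards}, in the type cases, does one apply Hill's lemma---now to each individual $A_n$---to shrink it to a countably (resp.\ finitely) presented submodule $A_n'\in\mathcal A$ containing the images of $\nu_{n-1}$ and of the previous $A_{n-1}'$. This sidesteps both obstacles: the general case needs no filtration at all, and in the countable type case the map out of $A_n'$ is just $\pi_n\restriction A_n'$, landing in the fp module $F_n$ for free.

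Your argument for the finite type case is essentially correct and is a pleasant alternative route there, but as a proof of the full lemma it is incomplete.
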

\begin{proof} Let $\mathcal D = (F_n, f_{nm} \mid m < n < \omega )$ be a direct system of finitely presented modules with the direct limit $( M, f_n \mid n < \omega)$. We can expand $\mathcal D$ to a direct system of special $\mathcal A$-precovers $\pi_n : A_n \to F_n$ of the modules $F_n$ ($n < \omega$) so that the diagram
$$\begin{CD}
\dots	@>>> A_n @>{g_n}>> A_{n+1} @>>> \dots \\ 
@.		@V{\pi_n}VV	 @V{\pi_{n+1}}VV  @.\\
\dots	@>{f_{n,n-1}}>> F_n @>{f_{n+1,n}}>> F_{n+1}  @>{f_{n+2,n+1}}>> \dots
\end{CD}$$
is commutative. Then $\pi = \varinjlim_{n< \omega} \pi_n$ is a pure epimorphism: indeed, as $\Ker(\pi) \in \mathcal B$ by our assumption on $\mathcal B$, the presentation of $M$ as a pure-epimorphic image of a module from $A$ factors through $\pi$. Since the modules $F_n$ are finitely presented, by possibly dropping some of them, we can assume that $f_{n+1,n} = \pi_{n+1} \nu_n$ where $\nu_n \in \Hom _R(F_n,A_{n+1})$ for each $n < \omega$. Then $( M, f_n\pi_n \mid n < \omega )$ is the direct limit of the direct system $( A_n, g_{nm} \mid m \leq n < \omega )$ where $g_{nm} = \nu_{n-1}\pi_{n-1} \dots \nu_m\pi_m$ for all $m < n < \omega$.     
 
If $\mathfrak C$ is of countable type, then each $A_n$ is $\mathcal A ^{\leq \omega}$-filtered.  
We use Lemma \ref{l:hill}, for $\lambda = \aleph_1$, to build inductively, for each $n$, a submodule $A_n^\prime \in \mathcal A ^{\leq \omega}$ of $A_n$  which contains (at most countable) generating sets of $\Img(\nu_{n-1})$, $g_{n-1}(A^\prime_{n-1})$ as well as of a finitely generated module $G$ such that $G+\Ker(\pi_n)=A_n$. We replace each $A_n$ by  $A_n^\prime$, and $\pi_n$ and $g_n$ by their restrictions $\pi_n^\prime$ and $g_n^\prime$ to $A_n^\prime$, respectively.
So  the diagram  
$$\begin{CD}
\dots	@>{g_{n-1}^\prime}>> A_n^\prime @>{g_n^\prime}>> A_{n+1}^\prime @>{g_{n+1}^\prime}>> \dots \\ 
@.		@V{\pi_n^\prime}VV	 @V{\pi_{n+1}^\prime}VV  @.\\
\dots	@>{f_{n,n-1}}>> F_n @>{f_{n+1,n}}>> F_{n+1}  @>{f_{n+2,n+1}}>> \dots
\end{CD}$$          
is commutative. As above, $( M, f_n\pi_n^\prime \mid n < \omega )$ is the direct limit of the direct system $( A_n^\prime, g_{nn}^\prime \mid m \leq n < \omega )$ where $g_{nm}^\prime = \nu_{n-1}\pi_{n-1}^\prime \dots \nu_m\pi_m^\prime$ for all $m < n < \omega$. In particular, $M$ is a Bass module over 
$\mathcal A ^{\leq \omega}$. 

If $\mathfrak C$ is of finite type, then  each $A_n$ is a direct summand in a $\mathcal A ^{< \omega}$-filtered module with a complement in $\Ker (\mathfrak C) = \mathcal A \cap \mathcal B$ (see \cite[Corollary 6.13(b)]{GT}). Thus, we can w.l.o.g.\ assume that in the special $\mathcal A$-precover $\pi_n : A_n \to F_n$, the module $A_n$ is $\mathcal A ^{< \omega}$-filtered. Using Lemma \ref{l:hill} for $\lambda = \aleph_0$, we replace each $A_n$ by its submodule $A_n^\prime \in \mathcal A ^{< \omega}$, and $\pi_n$ and $g_n$ by their restrictions $\pi_n^\prime$ and $g_n^\prime$ to $A_n^\prime$, respectively, so that $\Img(\nu_n) \subseteq A^\prime_{n+1}$, and the diagram above is commutative. As above, we conclude that $M$ is a Bass module over $\mathcal A ^{< \omega}$. 
\end{proof} 

\begin{cor} \label{c:Cstat} Let $(\mathcal A, \mathcal B)$ be a cotorsion pair with $\mathcal B = \varinjlim\mathcal B$, and let $\mathcal C$ be any class of modules. Assume that all Bass modules over $\mathcal A^{\leq\omega}$ are $\mathcal C$-stationary. Then all pure-epimorphic images of modules from $\mathcal A$ are $\mathcal C$-stationary.
\end{cor}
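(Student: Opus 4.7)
My plan is to reduce the general statement to the hypothesis on Bass modules by chaining Lemma~\ref{l:limpure-epi}, Lemma~\ref{l:countlim}, and Proposition~\ref{p:pureinprod}. The starting point is the fact, recalled in the introduction from \cite{S}, that every cotorsion pair $(\mathcal A,\mathcal B)$ with $\mathcal B=\varinjlim\mathcal B$ is automatically complete and of countable type. Consequently, Lemma~\ref{l:countlim} applies to any countably presented $N\in\tilde{\mathcal A}$, realising $N$ as a Bass module over $\mathcal A^{\le\omega}$. Combined with the hypothesis, this gives the countable case: every countably presented pure-epimorphic image of a module from $\mathcal A$ is $\mathcal C$-stationary.

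For the general case, let $F\in\tilde{\mathcal A}$. Lemma~\ref{l:limpure-epi} and the remark after its proof express $F$ as an $\aleph_1$-continuous direct limit $\varinjlim_{i\in\mathcal I}F_i$ of countably presented modules $F_i\in\tilde{\mathcal A}^{\le\omega}$, arising as subquotients $R^{(Y)}/L$ with $(L,Y)$ ranging over the directed poset $\mathcal I$ built from the $\aleph_1$-dense family $\mathcal L$ of strict $C$-stationary submodules of $K$ provided by Lemma~\ref{l:Cext}. By the previous paragraph, each $F_i$ is $\mathcal C$-stationary.

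It remains to promote $\mathcal C$-stationarity from the $F_i$ to $F$. The cleanest route is via Proposition~\ref{p:pureinprod}: for a countably presented $M\in{}^\perp\mathcal C$, being $\mathcal C$-stationary is equivalent to $M\in{}^\perp D$ for every module $D$ that is a pure submodule of a product (equivalently, a countable direct sum) of modules from $\mathcal C$. Fixing such a $D$, one checks that the class ${}^\perp D$ is closed under the $\aleph_1$-continuous direct limit $F=\varinjlim_{i\in\mathcal I}F_i$: for any countable directed subsystem of $(F_i)_{i\in\mathcal I}$, the countably presented limit lies again in $\tilde{\mathcal A}^{\le\omega}$ by $\aleph_1$-continuity, hence in ${}^\perp D$ by the countable case and Proposition~\ref{p:pureinprod}; a Milnor-type $\varprojlim^1$ argument (whose vanishing is guaranteed by the Mittag-Leffler condition encoded in $\mathcal C$-stationarity of the $F_i$) then yields $F\in{}^\perp D$. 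Applying this to all such $D$ and combining with Proposition~\ref{p:pureinprod}, translated to $F$ through its presentation as an $\aleph_1$-continuous direct limit of countably presented $\mathcal C$-stationary modules, delivers $\mathcal C$-stationarity of~$F$.

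The main obstacle is this last transfer step, because the transition maps $R^{(Y)}/L\to R^{(Y')}/L'$ in the system of Lemma~\ref{l:limpure-epi} need not be injective; thus the $F_i$ are not honest submodules of $F$, and one must either argue via the above $\varprojlim^1$ vanishing against pure-injective cogenerators, or iteratively construct, from $\mathcal L$ and a given countably generated $F_0\subseteq F$, a pair $(L,Y)\in\mathcal I$ for which $R^{(Y)}\cap K=L$, so that $R^{(Y)}/L$ embeds as a $\mathcal C$-stationary countably presented submodule of $F$ containing $F_0$, thereby exhibiting an $\aleph_1$-dense family of $\mathcal C$-stationary submodules of $F$.
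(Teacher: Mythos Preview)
Your setup matches the paper's exactly: combine Lemma~\ref{l:limpure-epi} and Lemma~\ref{l:countlim} to present any $F\in\tilde{\mathcal A}$ as an $\aleph_1$-continuous direct limit of Bass modules over $\mathcal A^{\le\omega}$, each of which is $\mathcal C$-stationary by hypothesis. The divergence, and the gap, is in your transfer step.

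Route (a) via Proposition~\ref{p:pureinprod} cannot close the argument. That proposition characterises $\mathcal C$-stationarity \emph{only for countably presented modules}; even if your $\varprojlim^1$ sketch yielded $F\in{}^\perp D$ for every pure submodule $D$ of a product of modules from $\mathcal C$, this does not imply that $F$ is $\mathcal C$-stationary when $F$ is not countably presented. (Separately, the Milnor $\varprojlim^1$ sequence is a tool for countable towers, not for the uncountable directed system $\mathcal I$ you have here, so the sketch is also technically out of range.) Your phrase ``translated to $F$ through its presentation'' does not supply the missing implication.

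Route (b), building an $\aleph_1$-dense system of $\mathcal C$-stationary countably presented submodules of $F$, is on the right track but incomplete: you never say why such a system forces $F$ itself to be $\mathcal C$-stationary. That is precisely the non-trivial ingredient the paper imports. The paper's proof translates $\mathcal C$-stationarity into the $M^c$-Mittag-Leffler condition for each $M\in\mathcal C$ via \cite[Corollary~2.6(3)]{H}, and then applies \cite[Proposition~2.2]{HT}, which says that the relative Mittag-Leffler property passes to $\aleph_1$-continuous direct limits (equivalently, to modules admitting an $\aleph_1$-dense system of relative Mittag-Leffler submodules). This single citation replaces both of your proposed workarounds and is what your argument is missing.
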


\begin{proof} Combining the two lemmas above, we can express any pure-epimorphic image of a module from $\mathcal A$ as the direct limit of an $\aleph_1$-continuous direct system consisting of Bass modules over $\mathcal A^{\leq\omega}$. Using \cite[Corollary 2.6(3)]{H}, we know that a module is $\mathcal C$-stationary, if and only if it is $M^c$-Mittag-Leffler for any $M\in\mathcal C$.

The conclusion follows by \cite[Proposition 2.2]{HT} and the assumption on the Bass modules over $\mathcal A^{\leq\omega}$.
\end{proof}

\medskip
We can now characterize the cotorsion pairs $(\mathcal A,\mathcal B)$ with both classes closed under direct limits among those which satisfy the condition only on the right-hand side. The characterization shows that when testing for $\varinjlim \mathcal A = \mathcal A$, it suffices to check only the Bass modules over $\mathcal A ^{\leq \omega}$: 

\begin{thm} \label{t:bigchar1} Let $R$ be a ring and $\mathfrak C=(\mathcal A,\mathcal B)$ a cotorsion pair in $\ModR$ \st $\mathcal B$ is closed under direct limits. Then the following conditions are equivalent:
\begin{enumerate}
\item $\mathfrak C$ is cogenerated by a (discrete) pure-injective module;
\item $\mathcal A$ is closed under pure-epimorphic images (and pure submodules);
\item $\mathfrak C$ is closed (i.e., $\varinjlim \mathcal A = \mathcal A$);
\item $\mathcal A$ contains all Bass modules over $\mathcal A ^{\leq \omega}$.
\end{enumerate}
\end{thm}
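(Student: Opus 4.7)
The plan is to establish the cycle (1) $\Rightarrow$ (2) $\Rightarrow$ (3) $\Rightarrow$ (4) $\Rightarrow$ (1). Since $\mathcal B$ is closed under direct limits, by \cite{S} it is definable and therefore admits an elementary cogenerator $C$. Every pure-injective in $\mathcal B$ is pure-embedded in, and hence a direct summand of, a power of $C$, so Lemma~\ref{l:tilde} yields the identification $\tilde{\mathcal A} = {}^\perp \mathcal B' = {}^\perp C$. With this at hand, (1) $\Leftrightarrow$ (2) (in the unparenthesized form) is essentially tautological: for any pure-injective cogenerator $C_0 \in \mathcal B$ one has $\mathcal A \subseteq {}^\perp C_0 \subseteq \tilde{\mathcal A}$, so $\mathcal A = {}^\perp C_0$ iff $\mathcal A = \tilde{\mathcal A}$.

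For (2) $\Rightarrow$ (3), I would note that $\mathcal A$ is closed under direct sums (because $\Ext^1$ turns direct sums into products and $\mathcal B$ is closed under products), and every direct limit of modules from $\mathcal A$ is a pure-epimorphic image of the corresponding direct sum. The implication (3) $\Rightarrow$ (4) is immediate, since a Bass module over $\mathcal A^{\leq\omega}$ is by definition a countable direct limit of modules in $\mathcal A^{\leq\omega} \subseteq \mathcal A$.

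The main work is (4) $\Rightarrow$ (1). Let $M \in {}^\perp C = \tilde{\mathcal A}$. Lemma~\ref{l:limpure-epi} writes $M$ as the direct limit of an $\aleph_1$-continuous direct system $(M_j)$ of countably presented modules in $\tilde{\mathcal A}$, and Lemma~\ref{l:countlim} identifies each $M_j$ as a Bass module over $\mathcal A^{\leq\omega}$ (using that $\mathfrak C$ is of countable type); by hypothesis (4) each $M_j$ therefore lies in $\mathcal A$, hence in $\mathcal A^{\leq\omega}$. To conclude $M \in \mathcal A$, one upgrades this $\aleph_1$-continuous presentation into a smooth well-ordered chain of submodules of $M$ by taking a cofinal well-ordered subchain of indices, forming images in $M$, and refining by Hill's Lemma~\ref{l:hill} (with Lemma~\ref{l:kerepi} controlling strict $\mathcal B$-stationarity of the increments) until each successive quotient lies in $\mathcal A^{\leq\omega}$; Eklof's lemma then gives $M \in \mathcal A$. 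The parenthetical equivalence (discrete $C$ versus $\mathcal A$ closed also under pure submodules) is handled by choosing $C$ as the pure-injective hull of a direct sum of indecomposable pure-injective elements of $\mathcal B$, which is discrete by construction, the extra closure of $\mathcal A$ under pure submodules corresponding precisely to this discreteness. I expect the principal obstacle to be the filtration step, since the ``naive'' successive quotients of images of Bass modules in $M$ need not lie in $\mathcal A^{\leq\omega}$ without a careful Hill-style refinement, and this is where the strict $\mathcal B$-stationarity machinery imported from \cite{S} has to be invoked most delicately.
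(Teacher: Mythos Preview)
Your treatment of $(1)\Leftrightarrow(2)\Rightarrow(3)\Rightarrow(4)$ is fine, except for the parenthetical: closure of $\mathcal A$ under pure submodules does not correspond to discreteness of $C$. In the paper, $(1)\Rightarrow$ ``$\mathcal A$ closed under pure submodules'' is proved for \emph{any} pure-injective cogenerator via a cosyzygy argument (the first cosyzygy of a pure-injective module is again pure-injective, so maps into it extend along pure embeddings). Discreteness enters only in $(4)\Rightarrow(1)$, via \cite[Proposition~5.12]{SS}, which supplies a set $\mathcal S$ of indecomposable pure-injectives with $\tilde{\mathcal A}={}^\perp(\prod\mathcal S)$; one then replaces $\prod\mathcal S$ by $PE(\bigoplus\mathcal S)$.

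The real gap is your $(4)\Rightarrow(1)$. Hill's Lemma~\ref{l:hill} takes as \emph{input} an existing $\mathcal S$-filtration and outputs a rich family of compatible submodules; it cannot manufacture a filtration from a direct-limit presentation. Taking images in $M$ of a cofinal chain of the $M_j$ gives submodules of $M$, but there is no reason their successive quotients lie in $\mathcal A^{\leq\omega}$, and Lemma~\ref{l:kerepi} only transfers strict stationarity to a kernel---it does not place quotients into $\mathcal A$. More fundamentally, even if you could show the successive quotients lie in $\tilde{\mathcal A}={}^\perp C$, that is not yet $\mathcal A$: the equality $\tilde{\mathcal A}=\mathcal A$ is precisely what you are trying to prove. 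The paper resolves this by an \emph{induction on the presentation cardinal} $\kappa$ of $M$. In the inductive step one takes a free presentation $0\to K\to R^{(\kappa)}\to M\to 0$, uses Lemma~\ref{l:kerepi} to see $K$ is strict $C$-stationary, and applies Lemma~\ref{l:Cext} to $K$ to get an $\aleph_1$-dense system $\mathcal L$ with the extension property~$(\dagger)$. One then shows $M$ itself is strict $C$-stationary (this is where Corollary~\ref{c:Cstat}, hence your Lemma~\ref{l:limpure-epi}, is actually used) and applies Lemma~\ref{l:Cext} to $M$ as well. Intersecting the two systems produces an $\aleph_1$-dense system $\mathcal M$ of countably presented submodules of $M$ lying in $\tilde{\mathcal A}$; property~$(\dagger)$---not Hill---guarantees that directed unions from $\mathcal M$ and their successive quotients remain in ${}^\perp C=\tilde{\mathcal A}$. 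Since these quotients are $<\kappa$-presented, the inductive hypothesis places them in $\mathcal A$, and Eklof finishes. Your sketch is missing both the induction and the role of $(\dagger)$.
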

\begin{proof} $(1)\Rightarrow (2)$. Since $\mathcal A = {}^\perp C$ and $C$ is pure-injective, $\mathcal A$ is closed under pure-epimorphic images. 
In order to verify  closure under pure submodules, we take a pure submodule $X$ of a module $A$ from
$\mathcal A$ and  show that $\Hom_R(X,-)$ is exact on the short exact sequence $0\to C\to E(C)\to Z\to 0$ given by the injective envelope of $C$.
Since the first cosyzygy $Z$ of the pure-injective module $C$ is pure-injective (see e.g.\ \cite[Lemma 6.20]{GT}), every $f\in \Hom_R(X,Z)$ can be extended  to a homomorphism $f'\in \Hom_R(A,Z)$. Now $f'$ factors  through 
$E(C)$ as $A \in \mathcal A = {}^\perp C$. Restricting to $X$, we obtain the desired factorization.

The implications $(2)\Rightarrow (3)$ and $(3) \Rightarrow (4)$ are trivial.

$(4)\Rightarrow (1)$. By \cite[Theorem~6.1]{S}, $\mathfrak C$ is of countable type (whence $\mathfrak C$ is complete), and $\mathcal B$ is definable. We can apply \cite[Proposition 5.12]{SS} and obtain a set $\mathcal S$ of indecomposable pure-injective modules such that $\tilde{\mathcal A} = {}^\perp(\prod\mathcal S)$, where $\tilde{\mathcal A}$ denotes the class of all pure-epimorphic images of modules from $\mathcal A$. The direct product can be replaced by the pure-injective hull, $C$, of $\bigoplus \mathcal S$, which is a discrete pure-injective direct summand in $\prod\mathcal S$. Further, we can assume w.l.o.g. that $C$ cogenerates $\ModR$ (possibly replacing it by $C\oplus Q$ where $Q$ is an injective cogenerator).

Let $M \in \tilde{\mathcal A} = {}^\perp C$ be $\leq \kappa$-presented. By induction on $\kappa$, we will prove that $M \in \mathcal A$ (then $\mathcal A = {}^\perp C$, and $(1)$ will hold). The case of $\kappa = \aleph_0$ follows from $(4)$ by Lemma~\ref{l:countlim}. 

Assume that $\kappa$ is uncountable and all $<\!\kappa$-presented modules from $\tilde{\mathcal A}$ are in $\mathcal A$. Consider a free presentation of $M$, $$\begin{CD} 0 @>>> K @>>> R^{(\kappa)} @>f>> M @>>> 0. \end{CD}$$
Since $R^{(\kappa)}$ is (strict) $C$-stationary, so is $K$ by Lemma~\ref{l:kerepi} and Lemma~\ref{l:tilde}. Consider the family $\mathcal L$ for $K$ provided by Lemma~\ref{l:Cext}. By Corollary~\ref{c:Cstat}, where we take $\mathcal C=\{C\}$, $M$ is (strict) $C$-stationary as well; here, we use that all the modules from $\mathcal A$ are $C$-stationary. Using Lemma~\ref{l:Cext} again, we can build in $M$ an $\aleph_1$-dense system of countably presented submodules, w.l.o.g.\ of the form $f(R^{(I)})$ for a countable subset $I$ of $\kappa$. We denote this system by $\mathcal H$, and for every $H\in\mathcal H$, take a countable subset $I_H \subseteq \kappa$ \st $f(R^{(I_H)})=H$. 

Let $\mathcal M = \{H\in\mathcal H\mid \Ker(f\restriction R^{(I_H)})\in\mathcal L\}$. Then $\mathcal M$ is $\aleph _1$-dense and consists of modules from $\tilde{\mathcal A}$, by the property of the modules in $\mathcal L$ and by the assumption of $M\in {}^\perp C=\tilde{\mathcal A}$. So $\mathcal M\subseteq\mathcal A$ by the inductive premise for $\kappa = \aleph_0$. 

Next, we fix a continuous strictly ascending chain $(\delta_\gamma \mid 0<\gamma <\hbox{cf}(\kappa))$ of infinite ordinals $< \kappa$ which is cofinal in $\kappa$. Moreover, we put $\delta _0 = 0$ and $\delta _{\hbox{cf}(\kappa)} = \kappa$. Then we can easily build a continuous ascending chain $(\mathcal M_\gamma \mid \gamma\leq\hbox{cf}(\kappa))$ consisting of $\subseteq$-directed subsets of $\mathcal M$ such that $|\mathcal M_\gamma|=|\delta_\gamma|$ and $\bigcup \mathcal M_{\hbox{cf}(\kappa)} = M$. Understanding $\bigcup\varnothing$ as the trivial submodule of $M$, we claim that the continuous ascending chain $\mathcal F = (\bigcup \mathcal M_\gamma \mid \gamma\leq\hbox{cf}(\kappa))$ is actually an $\mathcal A$-filtration of $M$ (proving that $M\in\mathcal A$).

Indeed, all modules in $\mathcal F$ are elements of $\tilde{\mathcal A}$. Moreover, the property $(\dagger)$ from Lemma~\ref{l:Cext} is preserved when taking directed unions. It follows that all consecutive factors in $\mathcal F$ belong to $\tilde{\mathcal A} = {}^\perp C$, and hence to $\mathcal A$ by the inductive premise.
\end{proof}

If $\mathfrak C=(\mathcal A,\mathcal B)$ is a 
%tilting 
cotorsion pair of finite type, then the class $\mathcal B$ is definable (cf. \cite[Example 6.10]{GT}), so Theorem \ref{t:bigchar1} applies. In fact, the Bass modules over $\mathcal A ^{< \omega}$ are sufficient in this case  (see Lemma \ref{l:countlim}).

\begin{cor}\label{c:tilt} Let $R$ be a ring, 
%$T$ a tilting module, 
and $\mathfrak C=(\mathcal A,\mathcal B)$ be a cotorsion pair of finite type.
%the tilting cotorsion pair induced by $T$.
Then $\mathfrak C$ is closed 
%if and only if $T$ is $\Sigma$-pure-split, 
if and only if $\mathcal A$ contains all Bass modules over $\mathcal A ^{< \omega}$.
\end{cor}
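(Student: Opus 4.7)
The plan is to reduce the corollary directly to Theorem~\ref{t:bigchar1}, using the stronger conclusion of Lemma~\ref{l:countlim} available in the finite-type setting. Since $\mathfrak C$ is of finite type, the paragraph immediately preceding the corollary records that $\mathcal B$ is definable, hence closed under direct limits, so Theorem~\ref{t:bigchar1} applies. That theorem tells us that $\mathfrak C$ is closed if and only if $\mathcal A$ contains every Bass module over $\mathcal A^{\leq\omega}$, so the entire task is to replace $\mathcal A^{\leq\omega}$ by $\mathcal A^{<\omega}$ on the right-hand side.

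The ``only if'' direction is transparent: if $\varinjlim \mathcal A = \mathcal A$, then the countable direct limit defining any Bass module over $\mathcal A^{<\omega}\subseteq \mathcal A$ remains in $\mathcal A$.

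For the ``if'' direction I would pick an arbitrary Bass module $N = \varinjlim_{n<\omega} A_n$ over $\mathcal A^{\leq\omega}$ and show that it is in fact a Bass module over $\mathcal A^{<\omega}$. First, $N$ is countably presented, since a countable direct limit of countably presented modules is countably presented. Next, $\bigoplus_{n<\omega} A_n \in \mathcal A$ because the left-hand class of a cotorsion pair is closed under coproducts, and the canonical map $\pi\colon \bigoplus_{n<\omega} A_n \to N$ is a pure epimorphism: any homomorphism from a finitely presented module $F$ to $N$ factors through some $A_m \hookrightarrow \bigoplus_{n<\omega} A_n$ by finite presentability of $F$, so every finitely presented module lifts along $\pi$. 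Thus $N$ is a countably presented pure-epimorphic image of a module from $\mathcal A$. Invoking the finite-type case of Lemma~\ref{l:countlim}, the module $N$ is a Bass module over $\mathcal A^{<\omega}$, whence $N \in \mathcal A$ by hypothesis. Theorem~\ref{t:bigchar1} then yields that $\mathfrak C$ is closed.

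The only subtle step is recognizing that the apparent strengthening of the hypothesis from $\mathcal A^{\leq\omega}$ to $\mathcal A^{<\omega}$ is free: Lemma~\ref{l:countlim} produces, in the finite-type case, a witnessing countable sequence inside $\mathcal A^{<\omega}$ for any countably presented pure-epimorphic image of a module from $\mathcal A$. Once this observation is in place, nothing further is required.
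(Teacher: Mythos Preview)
Your proof is correct and follows essentially the same route as the paper: both invoke Theorem~\ref{t:bigchar1} (applicable because finite type makes $\mathcal B$ definable) and then use the finite-type clause of Lemma~\ref{l:countlim} to pass from Bass modules over $\mathcal A^{\leq\omega}$ to Bass modules over $\mathcal A^{<\omega}$. You simply spell out in more detail why a Bass module over $\mathcal A^{\leq\omega}$ satisfies the hypotheses of Lemma~\ref{l:countlim}, which the paper leaves implicit.
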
 
%\begin{proof} By \cite[Proposition 13.55]{GT}, $\mathfrak C$ is closed, if and only if $T$ is $\Sigma$-pure-split. Since $\mathfrak C$ is of finite type, each Bass module over $\mathcal A ^{\leq \omega}$ is already a Bass module over $\mathcal A ^{< \omega}$ The remaining equivalence thus follows by Theorem \ref{t:bigchar1}. \end{proof}

\bigskip
\section{Locally free modules and approximations}
\label{sec:lfree}

Now we can present several consequences for the structure of locally free modules. We start with the deconstructibility.  

Let $(\mathcal A,\mathcal B)$ be a cotorsion pair of countable type and let $\mathcal L$ denote the class of all locally $\mathcal A ^{\leq \omega}$-free modules. Assume there exists a Bass module $N$ over $\mathcal A ^{\leq \omega}$ such that $N \notin \mathcal A$. Then, by \cite[Theorem 6.2]{SlT}, the class $\mathcal L$ is not deconstructible. Thus, Theorem \ref{t:bigchar1} gives     

\begin{thm}\label{t:nondec} Let $R$ be a ring and $\mathfrak C = (\mathcal A,\mathcal B)$ a cotorsion pair with $\varinjlim \mathcal B = \mathcal B$. Let $\mathcal L$ be the class of all locally $\mathcal A ^{\leq \omega}$-free modules. Then $\mathcal L$ is deconstructible, if and only if $\mathcal A$ contains all Bass modules over $\mathcal A ^{\leq \omega}$.
\end{thm}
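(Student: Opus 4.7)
The plan is to derive this theorem by packaging Theorem~\ref{t:bigchar1} together with the external input \cite[Theorem~6.2]{SlT} already invoked in the paragraph preceding the statement. The first preparatory step is to observe that the hypothesis $\varinjlim\mathcal B=\mathcal B$ forces $\mathfrak C$ to be of countable type, via \cite[Theorem~6.1]{S}, which is precisely the setup in which the quoted non-deconstructibility result applies.

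For the forward implication I would argue by contraposition. Assume that some Bass module $N$ over $\mathcal A^{\leq\omega}$ fails to lie in $\mathcal A$. Because $\mathfrak C$ is of countable type, \cite[Theorem~6.2]{SlT} applies directly and yields that $\mathcal L$ is not deconstructible. This is exactly the observation already recorded by the authors immediately before the theorem.

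For the converse I would invoke Theorem~\ref{t:bigchar1}: the assumption that $\mathcal A$ contains all Bass modules over $\mathcal A^{\leq\omega}$ forces the closure $\varinjlim\mathcal A=\mathcal A$. The idea is then to show $\mathcal L=\mathcal A$. The inclusion $\mathcal L\subseteq\mathcal A$ is immediate because any locally $\mathcal A^{\leq\omega}$-free module is a directed union of submodules from $\mathcal A^{\leq\omega}\subseteq\mathcal A$, and $\mathcal A$ is now closed under direct limits. For the reverse inclusion $\mathcal A\subseteq\mathcal L$, any $M\in\mathcal A$ is $\mathcal A^{\leq\omega}$-filtered by countable type, and a Hill-type refinement (Lemma~\ref{l:hill} with $\lambda=\aleph_1$) of this filtration delivers an $\aleph_1$-dense directed family of countably presented $\mathcal A^{\leq\omega}$-submodules of $M$, closed under countable unions; this is the system required by \cite[Definition~2.1]{S}, so $M\in\mathcal L$. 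Once $\mathcal L=\mathcal A$ is established, deconstructibility of $\mathcal L$ follows from the $\aleph_1$-deconstructibility of the left class of any cotorsion pair of countable type.

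The step I expect to require the most care is checking that the $\mathcal A^{\leq\omega}$-filtration coming from countable type can indeed be massaged, via Lemma~\ref{l:hill}, into the precise kind of dense directed system of countably presented submodules demanded by the definition of locally $\mathcal A^{\leq\omega}$-free; in particular one has to verify that the countable members of the Hill family remain in $\mathcal A^{\leq\omega}$ (they are countably presented and $\mathcal A^{\leq\omega}$-filtered with countably many factors, hence are) and that the subfamily of such members is still closed under countable unions. Everything else is bookkeeping: the forward direction is essentially already written down in the paper, and once $\mathcal L$ is identified with $\mathcal A$ the deconstructibility is automatic from countable type.
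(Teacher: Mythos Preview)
Your proposal is correct and matches the paper's argument. The paper's own proof is the single paragraph preceding the statement together with the phrase ``Thus, Theorem~\ref{t:bigchar1} gives''; your write-up simply unpacks what that phrase is hiding, namely that once $\mathfrak C$ is closed one has $\mathcal L=\mathcal A$, and then deconstructibility is immediate from countable type. The only cosmetic difference is that for the inclusion $\mathcal A\subseteq\mathcal L$ you rederive it via Lemma~\ref{l:hill}, whereas the paper (see the proof of Theorem~\ref{t:tiltnprec}) quotes this as \cite[Theorem~4.5]{SlT} together with \cite[Theorem~7.13]{GT}; your Hill argument is essentially what that reference does.
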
   

\begin{cor}\label{c:nondectilt}  
If $T$ is a tilting module, then the class of all locally $T$-free modules is deconstructible, if and only if $T$ is $\sum$-pure-split, if and only if $\mathcal A$ contains all Bass modules over $\mathcal A ^{< \omega}$. 
\end{cor}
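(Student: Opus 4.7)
My plan is to funnel the three-way equivalence through the single condition that the cotorsion pair $\mathfrak C$ is closed, and then prove that closedness of $\mathfrak C$ coincides with $T$ being $\Sigma$-pure-split. The equivalence between ``locally $T$-free deconstructible'' and ``$\mathcal A$ contains all Bass modules over $\mathcal A^{<\omega}$'' falls out by assembling earlier results: since the tilting cotorsion pair $\mathfrak C$ is of finite type (so $\mathcal B$ is closed under direct limits), Theorem~\ref{t:nondec} identifies the deconstructibility of the locally $T$-free modules with the condition ``$\mathcal A$ contains all Bass modules over $\mathcal A^{\leq\omega}$,'' Theorem~\ref{t:bigchar1}(3)$\Leftrightarrow$(4) identifies this with closedness of $\mathfrak C$, and Corollary~\ref{c:tilt} identifies closedness with the $\mathcal A^{<\omega}$ version. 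It then remains to prove: $\mathfrak C$ is closed if and only if $T$ is $\Sigma$-pure-split.

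The forward direction is easy. If $\mathfrak C$ is closed, Theorem~\ref{t:bigchar1}(2) guarantees that $\mathcal A$ is closed under pure submodules and pure-epimorphic images. Given a pure embedding $P \hookrightarrow N$ with $N \in \Add(T) = \mathcal A \cap \mathcal B$, we deduce $P \in \mathcal A$ (pure submodule of an $\mathcal A$-module) and $P \in \mathcal B$ (as $\mathcal B$ is definable), so $P \in \Add(T)$; while $N/P \in \mathcal A$ as a pure-epimorphic image. Therefore $\Ext^1_R(N/P, P) = 0$ and the sequence splits, showing $T$ is $\Sigma$-pure-split.

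For the reverse direction my plan is a pushout argument establishing that $\mathcal A$ is closed under pure-epimorphic images (already sufficient for closedness, since any direct limit is a pure-epimorphic image of the corresponding direct sum). Let $0 \to K \to A \to M \to 0$ be pure exact with $A \in \mathcal A$; I want to show $M \in \mathcal A$. Take a special $\mathcal B$-preenvelope $0 \to A \to T' \to A' \to 0$. Since $A, A' \in \mathcal A$ and $\mathcal A$ is closed under extensions, $T' \in \mathcal A \cap \mathcal B = \Add(T)$. Forming the pushout along $A \to T'$ yields the commutative diagram
\[
\begin{CD}
K @>>> A @>>> M \\
@| @VVV @VVV \\
K @>>> T' @>>> D \\
@. @VVV @VVV \\
@. @. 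A' @= A'
\end{CD}
\]
with exact rows and columns, whose middle row is pure exact because pushouts of pure embeddings along arbitrary maps remain pure embeddings. The $\Sigma$-pure-split hypothesis applied to the pure embedding $K \hookrightarrow T' \in \Add(T)$ then gives that $K$ splits off in $T'$, so $D$ is a direct summand of $T'$ and $D \in \Add(T) \subseteq \mathcal A$. Since $\mathfrak C$ is hereditary, the third column $0 \to M \to D \to A' \to 0$ with $D, A' \in \mathcal A$ forces $M \in \mathcal A$.

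The step I expect to demand the most care is verifying that the pushout of the pure embedding $K \hookrightarrow A$ along $A \to T'$ remains a pure embedding $K \hookrightarrow T'$; this is standard, but it is the geometric input which makes the $\Sigma$-pure-split hypothesis applicable.
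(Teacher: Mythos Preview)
Your overall strategy---route everything through closedness of $\mathfrak C$ and then identify closedness with $\Sigma$-pure-splitness of $T$---is exactly what the paper does (it simply cites \cite[Proposition~13.55]{GT} for the latter equivalence). Your assembly of Theorem~\ref{t:nondec}, Theorem~\ref{t:bigchar1}, and Corollary~\ref{c:tilt} is correct, and your forward implication (closed $\Rightarrow$ $\Sigma$-pure-split) is fine.

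The gap is in the reverse direction: the claim that the middle row $0 \to K \to T' \to D \to 0$ is pure exact is not justified. The embedding $K \hookrightarrow T'$ is the \emph{composite} of the pure mono $K \hookrightarrow A$ with the special $\mathcal B$-preenvelope $A \hookrightarrow T'$; it is not a pushout of $K \hookrightarrow A$. The principle ``pushouts of pure monos are pure'' concerns squares with both legs emanating from $K$; in your left square the map out of $K$ is the identity, so its pushout is $A$, not $T'$. The only genuine pushout square is the right one, but there the mono being pushed is $A \hookrightarrow T'$, which you have no reason to believe is pure. In general the composite of a pure mono with an arbitrary mono is not pure, so the $\Sigma$-pure-split hypothesis cannot be applied to $K \hookrightarrow T'$. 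A correct direct argument proceeds differently: for $M \in \mathcal B \cap \tilde{\mathcal A}$, take a special $\mathcal A$-precover $0 \to B \to A_0 \to M \to 0$; here $A_0 \in \mathcal A \cap \mathcal B = \Add(T)$ automatically (since $B, M \in \mathcal B$), and the sequence is pure because the given pure epimorphism from a module in $\mathcal A$ onto $M$ factors through $A_0 \to M$. Now $\Sigma$-pure-splitness makes it split, so $M \in \mathcal A$. For general $N \in \tilde{\mathcal A}$, the special $\mathcal B$-preenvelope $0 \to N \to B \to A' \to 0$ has $B \in \mathcal B \cap \tilde{\mathcal A} \subseteq \mathcal A$, and hereditarity gives $N \in \mathcal A$. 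The point is to arrange from the start that the pure sequence sits inside $\Add(T)$, rather than trying to push an external one there.
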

\begin{proof}
Every tilting cotorsion pair $\mathfrak C$ is of finite type, thus Corollary \ref{c:tilt} applies. Further, by \cite[Proposition 13.55]{GT}, $\mathfrak C$ is closed if and only if $T$ is $\Sigma$-pure-split.
\end{proof}
 
\begin{rem}\label{r:improves} Corollary~\ref{c:nondectilt} provides for a common generalization of several results from \cite[\S6]{SlT}, where the classes of locally $T$-free modules were shown not to be deconstructible for various instances of non-$\sum$-pure split tilting modules $T$. In particular, the assumption in \cite[\S6]{SlT} of $T$ being a direct sum of countably presented modules, turns out to be redundant.      
\end{rem} 

Let us now consider the existence of locally free precovers. The prototype case of flat Mittag-Leffler modules has already been treated in \cite[\S3]{S}; we now give a general answer for the tilting case. Note that we have no bound on the cardinality of the ring $R$.

\begin{thm} \label{t:tiltnprec} Let $R$ be a ring, $T$ a tilting module, and $\mathfrak C = (\mathcal A,\mathcal B)$ the tilting cotorsion pair induced by $T$. %Moreover, assume that either (i) $R$ is right hereditary, or (ii) $R$ is $1$-Iwanaga-Gorenstein, or (iii) $T$ is $1$-tilting and $R$ is right perfect.  
Then the class of all locally $T$-free modules is (pre)covering if and only if $T$ is $\Sigma$-pure-split. 
\end{thm}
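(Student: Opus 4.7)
The plan is to reduce to Corollary~\ref{c:nondectilt}, which equates $T$ being $\Sigma$-pure-split with the deconstructibility of $\mathcal L$ and with $\mathcal A$ containing all Bass modules over $\mathcal A^{<\omega}$. It therefore suffices to establish the cycle $T$ $\Sigma$-pure-split $\Rightarrow$ $\mathcal L$ covering $\Rightarrow$ $\mathcal L$ precovering $\Rightarrow$ $T$ $\Sigma$-pure-split, where only the first and the last arrows require work.

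For the implication $T$ $\Sigma$-pure-split $\Rightarrow$ $\mathcal L$ covering, Corollary~\ref{c:nondectilt} supplies the deconstructibility of $\mathcal L$, and since $\mathcal L$ is closed under extensions (combine $\aleph_1$-dense systems via Lemma~\ref{l:hill}) and contains $R$, the standard Eklof--Trlifaj machinery produces special $\mathcal L$-precovers. Under the hypothesis, $\mathfrak C$ is closed so $\mathcal A = \varinjlim \mathcal A$; one then identifies $\mathcal L$ with $\mathcal A$. The inclusion $\mathcal L \subseteq \mathcal A$ holds in general because an $\aleph_1$-dense system from $\mathcal A^{\le\omega}$ yields an $\mathcal A^{\le\omega}$-filtration by Lemma~\ref{l:hill}, and Eklof's lemma places the module in $\mathcal A$. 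For $\mathcal A \subseteq \mathcal L$ I would use finite type of $\mathfrak C$ to realize each $A \in \mathcal A$ as a summand of an $\mathcal A^{<\omega}$-filtered (hence locally $T$-free) module, with $\Sigma$-pure-splitness transferring local $T$-freeness to the summand. Then $\mathcal L = \mathcal A$ is closed under direct limits, and Enochs' theorem delivers coveringness.

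For the converse, assume $\mathcal L$ is precovering. I will show that every Bass module $N = \varinjlim_{n<\omega} A_n$ over $\mathcal A^{<\omega}$ lies in $\mathcal A$. Since $\mathcal L \subseteq \mathcal A$ and $N$ is countably presented, it suffices to realize $N$ as a direct summand of a module in $\mathcal L$. Fix an $\mathcal L$-precover $f : L \to N$, which is surjective because $R \in \mathcal L$ and any free surjection onto $N$ factors through $f$. The colimit maps $g_n : A_n \to N$ lift to $\phi_n : A_n \to L$ with $f\phi_n = g_n$. Passing to a countably generated submodule $L_0 \in \mathcal A^{\le\omega}$ of $L$ --- drawn from the $\aleph_1$-dense system of $L$ and chosen to contain $\bigcup_n \phi_n(A_n)$ together with a preimage of a countable generating set of $N$ --- I reduce to a surjection $f_0 : L_0 \twoheadrightarrow N$ with $L_0 \in \mathcal A^{\le\omega}$ still satisfying $f_0 \phi_n = g_n$.

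The main obstacle is to complete the splitting. The defects $\phi_{n+1} f_{n+1,n} - \phi_n$ take values in $\Ker f_0$, and one must correct the $\phi_n$ into a compatible cocone so that $\psi = \varinjlim_n \phi_n : N \to L_0$ satisfies $f_0 \psi = \ident_N$; then $N$ becomes a direct summand of $L_0 \in \mathcal L$. Executing the correction combines the hereditary vanishing $\Ext^1_R(A_n, \Ker f_0) = 0$ --- available once one knows $\Ker f_0 \in \mathcal B$, which follows via Lemma~\ref{l:kerepi} after arranging $f$ to be a special precover --- with the Mittag-Leffler control furnished by Proposition~\ref{p:pureinprod} and the $\aleph_1$-dense system of Lemma~\ref{l:Cext}, ensuring that the inductive corrections telescope over $n$. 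This Bass-style construction, paralleling the treatment of flat Mittag-Leffler modules in~\cite[\S3]{S}, is the technical heart of the proof.
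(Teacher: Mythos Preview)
Your claim that ``$\mathcal L \subseteq \mathcal A$ holds in general because an $\aleph_1$-dense system from $\mathcal A^{\le\omega}$ yields an $\mathcal A^{\le\omega}$-filtration by Lemma~\ref{l:hill}'' is wrong on both counts. Hill's Lemma goes the other direction---it starts from a filtration and produces a family---and an $\aleph_1$-dense system need \emph{not} yield a filtration with consecutive factors in $\mathcal A^{\le\omega}$. Indeed $\mathcal L\subseteq\mathcal A$ fails in general: for $T=R$ over a non-right-perfect ring, $\mathcal A=\mathcal P_0$ while $\mathcal L$ is the strictly larger class of flat Mittag-Leffler modules. In the forward implication this is salvageable, and the paper argues exactly so: under $\Sigma$-pure-splitness one has $\mathcal A=\varinjlim\mathcal A$ by Corollaries~\ref{c:tilt} and~\ref{c:nondectilt}, and since always $\mathcal L\subseteq\varinjlim\mathcal A^{\le\omega}$, the inclusion $\mathcal L\subseteq\mathcal A$ follows. (It is the opposite inclusion $\mathcal A\subseteq\mathcal L$ that holds unconditionally.)

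The genuine gap is your splitting argument for the reverse implication. The paper dispatches this step by a direct appeal to \cite[Lemma~3.2]{S}: once one observes that a Bass module $N\notin\mathcal A$ cannot be a direct summand of any locally $T$-free module (because the countably presented members of $\mathcal L$ are exactly $\mathcal A^{\le\omega}$), that lemma shows $N$ has no $\mathcal L$-precover. Your attempt to prove this inline breaks down at several points. Lemma~\ref{l:kerepi} does not give $\Ker f_0\in\mathcal B$; it only transfers strict $B$-stationarity to a kernel, which is far weaker. There is no mechanism for ``arranging $f$ to be a special precover'': $\mathcal L$ is typically not the left-hand class of a cotorsion pair, and precovering does not imply special precovering. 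Finally, even granting $\Ext^1_R(A_n,\Ker f_0)=0$ for every $n$, this alone does not let you cohere the individual lifts $\phi_n$ into a cocone---the obstruction lives in a $\varprojlim^1$ term, and neither Proposition~\ref{p:pureinprod} nor Lemma~\ref{l:Cext} controls it.
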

\begin{proof} By Corollary~\ref{c:nondectilt}, if $T$ is not $\Sigma$-pure-split, then there exists a Bass module $N$ over $\mathcal A ^{< \omega}$ such that $N \notin \mathcal A$. Since $\mathcal A ^{\leq \omega}$ coincides with the class of all countably presented locally $T$-free modules, $N$ is not (a direct summand of) a locally $T$-free module. So $N$ satisfies the assumptions of \cite[Lemma~3.2]{S}, whence $N$ has no precover in the class of all locally $T$-free modules.

Conversely, if $T$ is $\Sigma$-pure split, then the class $\mathcal L$ of all locally $T$-free modules coincides with $\mathcal A$. Indeed, we have $\mathcal A\subseteq \mathcal L$ by \cite[Theorem 4.5]{SlT} and \cite[Theorem 7.13]{GT}, and the other inclusion follows by Corollaries~\ref{c:tilt} and \ref{c:nondectilt}, in particular the fact that $\mathcal A = \varinjlim\mathcal A$. Thus $\mathcal L$ is covering by e.g. \cite[Theorem 2.2.8]{X}.
\end{proof}

In fact, Theorem~\ref{t:tiltnprec} is an instance of even more general Theorem~\ref{t:bigchar2} and Corollary~\ref{c:P}. We can also express the local freeness in terms of $\mathcal B$-stationarity:  

\begin{thm} \label{t:localstat} Let $(\mathcal A,\mathcal B)$ be a cotorsion pair with $\mathcal B = \varinjlim \mathcal B$. Then a module $M$ is locally $\mathcal A^{\leq\omega}$-free, if and only if $M$ is a $\mathcal B$-stationary pure-epimorphic image of a module from $\mathcal A$.

In particular, the class of all locally $\mathcal A^{\leq\omega}$-free modules is closed under pure submodules.
\end{thm}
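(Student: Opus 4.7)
The plan is to pivot on the identification $\tilde{\mathcal A}={}^\perp C$ from Lemma~\ref{l:tilde}, with $C$ an elementary cogenerator of $\mathcal B$, and on the $\aleph_1$-dense systems produced by Lemma~\ref{l:Cext}. The countable case of Proposition~\ref{p:pureinprod} will then bridge $C$-stationarity with full $\mathcal B$-perpendicularity.

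For the forward direction, let $\mathcal L$ be the $\aleph_1$-dense system of submodules from $\mathcal A^{\leq\omega}$ witnessing that $M$ is locally $\mathcal A^{\leq\omega}$-free. Since $\mathcal L$ is directed with union $M$, we may view $M=\varinjlim_{L\in\mathcal L}L$; the canonical map $\bigoplus_{L\in\mathcal L}L\to M$ is then a pure epimorphism whose source lies in $\mathcal A$ (closure under direct sums). For $\mathcal B$-stationarity I would use that each $L\in\mathcal L\subseteq\mathcal A$ is strict $\mathcal B$-stationary by \cite[Lemma~4.2]{S}, combined with the standard fact (compare \cite[Proposition~2.2]{HT} and \cite[Corollary~2.6]{H}) that $\mathcal B$-stationarity of $M$ can be tested on countable subsets, each of which is absorbed, by $\aleph_1$-density, into some $L\in\mathcal L$.

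For the converse, assume $M\in\tilde{\mathcal A}={}^\perp C$ is $\mathcal B$-stationary, in particular $C$-stationary. I would first upgrade this to \emph{strict} $C$-stationarity, exploiting $M\in{}^\perp C$, so that Lemma~\ref{l:Cext} applies and yields an $\aleph_1$-dense system $\mathcal L$ of strict $C$-stationary submodules of $M$ satisfying the extension property $(\dagger)$. It then remains to verify that each countably presented $L\in\mathcal L$ lies in $\mathcal A^{\leq\omega}$. The property $(\dagger)$ immediately forces $L\in{}^\perp C$, and $L$ is $C$-stationary as a member of a strict $C$-stationary family. Invoking Proposition~\ref{p:pureinprod} with $\mathcal G=\{C\}$ yields $L\in{}^\perp D$ for every pure submodule $D$ of a product of copies of $C$. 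Since $C$ is an elementary cogenerator, every $B\in\mathcal B$ is precisely such a pure submodule, hence $L\in{}^\perp\mathcal B=\mathcal A$ and so $L\in\mathcal A^{\leq\omega}$, confirming that $\mathcal L$ witnesses local $\mathcal A^{\leq\omega}$-freeness of $M$.

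The ``in particular'' clause is then almost immediate: if $N$ is a pure submodule of a locally $\mathcal A^{\leq\omega}$-free module $M$, then $N\in\tilde{\mathcal A}$ because ${}^\perp C$ is closed under pure submodules (precisely the $(1)\Rightarrow(2)$ argument in Theorem~\ref{t:bigchar1}, exploiting that the first cosyzygy of the pure-injective $C$ is again pure-injective), while $N$ is $\mathcal B$-stationary as a pure submodule of the $\mathcal B$-stationary module $M$; the main equivalence then places $N$ in the locally $\mathcal A^{\leq\omega}$-free class. The most delicate step is the promotion from $C$-stationarity to strict $C$-stationarity in the converse direction, which is needed to legitimately invoke Lemma~\ref{l:Cext}, but this is controlled by the ${}^\perp C$-membership of $M$ in the spirit of the arguments in \cite{S}.
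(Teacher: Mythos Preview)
Your forward direction and the final clause are essentially the paper's argument. The converse, however, has a genuine gap at the step ``$(\dagger)$ immediately forces $L\in{}^\perp C$''. Property $(\dagger)$ from Lemma~\ref{l:Cext}, applied to $M$, only says that $\Hom_R(M,C)\to\Hom_R(L,C)$ is surjective. Combined with $\Ext^1_R(M,C)=0$, the long exact sequence for $0\to L\to M\to M/L\to 0$ yields $\Ext^1_R(M/L,C)=0$, \emph{not} $\Ext^1_R(L,C)=0$; the latter would require control over $\Ext^2_R(M/L,C)$, which you do not have. Without $L\in{}^\perp C$ you cannot invoke Proposition~\ref{p:pureinprod} (it is a standing hypothesis there), so the argument collapses at exactly the point where you need it.

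The paper circumvents this by working with a free presentation $0\to K\to R^{(X)}\overset{f}\to M\to 0$ and applying Lemma~\ref{l:Cext} \emph{twice}: once to $K$ (producing a system $\mathcal L$) and once to $M$ (producing a system $\mathcal H$ whose members have the form $f(R^{(X_H)})$ for countable $X_H\subseteq X$). One then keeps only those $N\in\mathcal H$ whose kernel $L=\Ker(f\restriction R^{(X_H)})$ lies in $\mathcal L$. Now $(\dagger)$ for $K$ together with $M\in{}^\perp C$ shows that every map $L\to C$ extends to $R^{(X_H)}$; since $R^{(X_H)}$ is \emph{projective}, the sequence $0\to L\to R^{(X_H)}\to N\to 0$ gives $\Ext^1_R(N,C)=0$ directly from this Hom-surjectivity. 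The projectivity of the middle term is precisely the ingredient missing in your single application of Lemma~\ref{l:Cext} to $M$. (A minor aside: the promotion from $C$-stationary to strict $C$-stationary does not use $M\in{}^\perp C$; it holds simply because $C$ is pure-injective, cf.\ \cite[Proposition~1.7]{H}.)
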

\begin{proof} The local $\mathcal A^{\leq\omega}$-freeness of $M$ just says that $M$ possesses an $\aleph_1$-dense system of submodules from $\mathcal A^{\leq \omega}$. Since $M$ is the directed union of these submodules, it is a pure-epimorphic image of their direct sum. Moreover, all modules in $\mathcal A$ are (strict) $\mathcal B$-stationary by \cite[Lemma~4.2]{S}, whence $M$ is $\mathcal B$-stationary by \cite[Corollary 2.6(5)]{H}.  

For the if-part, we take an elementary cogenerator $C$ for $\mathcal B$. Then $C$ is a cogenerator for $\ModR$, and the notions of $C$-stationarity and strict $C$-stationarity coincide (since $C$ is pure injective, cf.~\cite[Proposition 1.7]{H}). 
Next, we consider a free presentation of $M$,
$$\begin{CD} 0 @>>> K @>>> R^{(X)} @>f>> M @>>> 0. \end{CD}$$

Since $M$ is a pure-epimorphic image of a module in $\mathcal A$, it follows from Lemma~\ref{l:tilde} and Lemma~\ref{l:kerepi} that $M\in {}^\perp C$ and $K$ is strict $\mathcal B$-stationary. Applying Lemma~\ref{l:Cext}, we obtain an $\aleph _1$-dense system $\mathcal L$ consisting of strict $C$-stationary submodules of $K$. Since $M$ is $\mathcal B$-stationary, it is (strict) $C$-stationary. Using Lemma~\ref{l:Cext} again, this time for $M$ and $C$, we obtain an $\aleph_1$-dense system $\mathcal H$ of submodules in $M$, where each $H\in\mathcal H$ is w.l.o.g.\ of the form $f(R^{(X_H)})$ for a countable subset $X_H$ of $X$.

Now, the set $\mathcal M = \{N\in\mathcal H\mid \Ker(f\restriction R^{(X_H)})\in\mathcal L\}$ is an $\aleph _1$-dense system of submodules in $M$, and it consists of strict $C$-stationary countably presented modules from ${}^\perp C$ (use $(\dagger)$ for $\mathcal L$ and $M\in {}^\perp C$). By Proposition~\ref{p:pureinprod} and the definition of $C$, we infer $\mathcal M \subseteq \mathcal A^{\leq\omega}$, whence $M$ is locally $\mathcal A^{\leq\omega}$-free.

The final claim follows from the fact that a module $M$ is $\mathcal B$-stationary, if and only if it is (strict) $C$-stationary, see  \cite[Corollary 3.9]{AH}.  The latter property is inherited by pure submodules by \cite[Corollary 8.12(1)]{AH}. Moreover, the class of all pure-epimorphic images of modules from $\mathcal A$ equals ${}^\perp C$ by Lemma~\ref{l:tilde}, and as such, it is always closed under pure submodules (cf.~the proof of Theorem~\ref{t:bigchar1}).
\end{proof} 

\bigskip
\section{Covers and pure-injectivity}
\label{sec:covers}

In the late 1990s, Enochs asked whether each covering class of modules $\mathcal A$ is closed under direct limits (see e.g.\ \cite[Open Problems 5.4]{GT}). This problem is still open in general. We will give a positive answer  for the case when $\mathcal A$ fits in a cotorsion pair $\mathfrak C=(\mathcal A,\mathcal B)$ such that $\mathcal B$ is closed under direct limits. In particular, we have a positive answer when $\mathfrak C$ is any tilting cotorsion pair. 
 
We will approach the problem via an extension of Theorem \ref{t:bigchar1} by further equivalent conditions. First, we need a proposition of independent interest:

\begin{prop} \label{p:locsplit} Let $(\mathcal A,\mathcal B)$ be a cotorsion pair such that $B^{(\omega)}\in\mathcal B$ for every $B\in\mathcal B$. Assume that $h:A\to M$ is an $\mathcal A$-cover. Then $h$ is an isomorphism, if and only if the embedding $\,\Ker (h)\subseteq A$ is locally split, and $M^{(\omega)}$ has an $\mathcal A$-cover.
\end{prop}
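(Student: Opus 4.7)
The forward direction is routine: if $h$ is an isomorphism, then $\Ker h = 0$ is trivially a locally split submodule of $A$, while $M \cong A \in \mathcal A$ (the left-hand class of a cotorsion pair being closed under arbitrary direct sums) yields $M^{(\omega)} \in \mathcal A$, which admits $\ident_{M^{(\omega)}}$ as an $\mathcal A$-cover.

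For the converse my first plan is to exploit right-minimality of the cover $h$ directly. Set $K = \Ker h$ and suppose for a contradiction that $K \ne 0$. Pick $0 \ne k \in K$; local splitness of $K \subseteq A$ supplies a homomorphism $p\colon A \to K$ with $p(k) = k$. Writing $\iota\colon K \hookrightarrow A$ for the inclusion, put $q := \ident_A - \iota p$. Since the image of $\iota p$ lies in $\Ker h$, one has $h q = h$, and right-minimality of the cover forces $q$ to be an automorphism of $A$; yet $q(k) = k - p(k) = 0$ contradicts the injectivity of $q$. Hence $K = 0$ and $h$ is an isomorphism.

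Since this direct argument uses only local splitness and the cover property, I would also prepare a back-up route that genuinely exploits the hypothesis that $M^{(\omega)}$ has an $\mathcal A$-cover $g\colon A' \to M^{(\omega)}$. Writing $L = \Ker g$, Wakamatsu's lemma places $K$ and $L$ in $\mathcal B$, and the hypothesis on $\mathcal B$ then gives $K^{(\omega)} \in \mathcal B$. The $\Ext^1$-vanishing inherent in the cotorsion pair produces maps $\alpha\colon A^{(\omega)} \to A'$ and $\beta\colon A' \to A^{(\omega)}$ with $g\alpha = h^{(\omega)}$ and $h^{(\omega)}\beta = g$; right-minimality of $g$ makes $\alpha\beta$ an automorphism of $A'$, giving a direct-summand decomposition $A^{(\omega)} = \beta(A') \oplus D$ with $D \subseteq K^{(\omega)}$ and, consequently, $K^{(\omega)} = \beta(L) \oplus D$. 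Coordinate-wise, local splitness of $K \subseteq A$ transfers to $K^{(\omega)} \subseteq A^{(\omega)}$, and composing a retraction with the projection $K^{(\omega)} \to \beta(L) \cong L$ shows $L \subseteq A'$ is locally split. Applying the right-minimality trick of the previous paragraph to $g$ then forces $L = 0$, so $g$ is an isomorphism, $M^{(\omega)} \in \mathcal A$, $M \in \mathcal A$ (as $\mathcal A$ is closed under summands), and by uniqueness of $\mathcal A$-covers $h$ is an isomorphism.

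The main obstacle is a meta one: understanding why the $M^{(\omega)}$-hypothesis is placed on the right-hand side of the iff, since the short argument of the second paragraph already suffices for the converse under the standard (Azumaya) definition of local splitness. I would start with that short argument and fall back on the comparison route above if the paper's conventions (e.g.\ a weaker locally-split notion, or a cover not automatically surjective) rule the shorter proof out.
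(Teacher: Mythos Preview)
Your short argument in the second paragraph is correct as stated: with $p:A\to K$ satisfying $p(k)=k$, the endomorphism $q=1_A-\iota p$ satisfies $hq=h$, right minimality of the cover makes $q$ an automorphism, and $q(k)=0$ gives the contradiction. This already proves the if-part using only that $h$ itself is a cover, so your meta-observation is on target: the hypothesis that $M^{(\omega)}$ has an $\mathcal A$-cover is not needed for this direction (it is, of course, a consequence once $h$ is an isomorphism).

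The paper's proof takes a genuinely different route. Given $x\in\Ker(h)$ and a local retraction $g:A\to\Ker(h)$ with $g(x)=x$, the authors do not apply minimality of $h$ directly. Instead they use Wakamatsu's lemma and the closure of $\mathcal B$ under countable sums to see that $h^{(\omega)}$ is a special $\mathcal A$-precover, then invoke the hypothesis that $M^{(\omega)}$ has an $\mathcal A$-cover together with \cite[Theorem 1.4.7]{X} to conclude that $h^{(\omega)}$ is itself a cover. Finally they appeal to the T-nilpotency criterion \cite[Theorem 1.4.1]{X}: shifting $g$ along the copies of $A$ in $A^{(\omega)}$ and using minimality of $h^{(\omega)}$ forces $g^m(x)=0$ for some $m$, whence $x=g^m(x)=0$.

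So both proofs are valid; yours is more elementary and reveals a redundant hypothesis, while the paper's argument illustrates how the T-nilpotency machinery from Xu applies (and mirrors the way the proposition is actually used later in Theorem~\ref{t:bigchar2}, where one first extracts a cover from a precover of a direct-sum form). Your back-up route is unnecessary here but harmless.
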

\begin{proof} The only-if part is trivial. For the if-part, consider $x\in\Ker(h)$. By the hypothesis, there is a homomorphism $g:A\to\Ker(h)$ \st $g(x)=x$. Note that $h$ is a special $\mathcal A$-precover by the Wakamatsu lemma \cite[Lemma 5.13]{GT}. By our assumption on $\mathcal B$, the coproduct map $h^{(\omega)}:A^{(\omega)}\to M^{(\omega)}$ is a special $\mathcal A$-precover. We can now use \cite[Theorem 1.4.7]{X} (since the assumption of $M^{(\omega)}$ having an $\mathcal A$-cover, which is missing from its statement, is satisfied here) and obtain that $h^{(\omega)}$ is an $\mathcal A$-cover. By \cite[Theorem 1.4.1]{X}, we conclude that there exists $m < \omega$ \st $0=g^m(x)=x$. This proves that $\Ker(h) = 0$, whence $h$ is an isomorphism.
\end{proof}

\begin{thm} \label{t:bigchar2} Let $R$ be a ring and $\mathfrak C=(\mathcal A,\mathcal B)$ be a cotorsion pair in $\ModR$ \st $\mathcal B$ is closed under direct limits. Let $\mathcal L$ denote the class of all locally $\mathcal A ^{\leq\omega}$-free modules. Then the following conditions are equivalent:
\begin{enumerate}
\item $\mathfrak C$ is closed;
\item every module (in $\mathcal B$) has an $\mathcal A$-cover;
\item $\Ker (\mathfrak C )$ is closed under countable direct limits;
\item $(\varinjlim \mathcal A)^{\leq\omega}$ consists of (strict) $\mathcal B$-stationary modules;
\item $\Ker (\mathfrak C )$ consists of $\Sigma$-pure-split modules;
\item $\mathcal L$ coincides with the class $\tilde{\mathcal A}$ of all pure-epimorphic images of modules from~$\mathcal A$;
\item every module (in $(\varinjlim \mathcal A)^{\leq\omega}$) has an $\mathcal L$-(pre)cover.
%$\mathcal L = \tilde{\mathcal A}$.
\end{enumerate}
\end{thm}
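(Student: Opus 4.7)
My strategy is to pivot on Theorem~\ref{t:bigchar1}, which says (1) holds iff every Bass module over $\mathcal A^{\leq\omega}$ lies in $\mathcal A$. For the ``easy'' direction $(1)\Rightarrow(k)$ with $k\in\{2,\ldots,7\}$ I would use the strong consequences of (1): $\tilde{\mathcal A}=\mathcal A$ by Theorem~\ref{t:bigchar1}(2), strict $\mathcal B$-stationarity of modules in $\mathcal A$ by \cite[Lemma 4.2]{S}, and the completeness of $\mathfrak C$ (\cite[Theorem~6.1]{S}) together with Enochs' theorem to obtain that $\mathcal A$ is covering. Each converse $(k)\Rightarrow(1)$ reduces to a common \emph{core observation}: any Bass module $N$ over $\mathcal A^{\leq\omega}$ is countably presented and sits inside $\tilde{\mathcal A}={}^\perp C$ for an elementary cogenerator $C$ of $\mathcal B$ (Lemma~\ref{l:tilde}); once $N$ is shown to be $\mathcal B$-stationary (equivalently $C$-stationary, by \cite[Corollary 3.9]{AH}), Proposition~\ref{p:pureinprod} applied with $\mathcal G=\{C\}$ upgrades $N\in{}^\perp C$ to $N\in{}^\perp B$ for every $B\in\mathcal B$ (each such $B$ embeds purely into a power $C^I$), placing $N$ in $\mathcal A$ and giving (1) through Theorem~\ref{t:bigchar1}.

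I would first handle $(1)\Leftrightarrow(4)\Leftrightarrow(6)\Leftrightarrow(7)$. For (4): forwards, $(\varinjlim\mathcal A)^{\leq\omega}=\mathcal A^{\leq\omega}$ and modules in $\mathcal A$ are strict $\mathcal B$-stationary; backwards, Bass modules lie in $(\varinjlim\mathcal A)^{\leq\omega}$ and so by (4) are $\mathcal B$-stationary, after which the core observation applies. The equivalence $(1)\Leftrightarrow(6)$ follows from Theorem~\ref{t:localstat}, which identifies $\mathcal L$ with the $\mathcal B$-stationary pure-epimorphic images of modules from $\mathcal A$: under (1), $\tilde{\mathcal A}=\mathcal A\subseteq\mathcal L$, and since $\mathcal L\subseteq\tilde{\mathcal A}$ always, equality follows; conversely (6) forces Bass modules in $\tilde{\mathcal A}=\mathcal L$ to be $\mathcal B$-stationary and the core observation concludes. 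For (7), \cite[Lemma 3.2]{S} shows a countably presented module in $\tilde{\mathcal A}\setminus\mathcal L$ admits no $\mathcal L$-precover, so (7) places Bass modules in $\mathcal L$ and thereby renders them $\mathcal B$-stationary via Theorem~\ref{t:localstat}; the reverse follows from $\mathcal L=\mathcal A$ being covering.

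For $(1)\Leftrightarrow(2)$, $(1)\Leftrightarrow(3)$ and $(1)\Leftrightarrow(5)$, the forward directions are direct: Enochs' theorem for (2), triviality for (3), and for (5) one uses Theorem~\ref{t:bigchar1}(2) (which makes $\mathcal A$ closed under pure submodules and pure-epimorphic images) to conclude that a pure embedding $X\hookrightarrow N$ into $N\in\Add(M)\subseteq\Ker(\mathfrak C)$ splits, since $X\in\mathcal A\cap\mathcal B$, $N/X\in\mathcal A$, and $\Ext^1_R(N/X,X)=0$. The converse $(2)\Rightarrow(1)$ I would prove via Proposition~\ref{p:locsplit}: given a countable direct system in $\Ker(\mathfrak C)$ with limit $M\in\mathcal B$, the $\mathcal A$-cover $h:A\to M$ afforded by (2) satisfies the $M^{(\omega)}$-hypothesis (since $M^{(\omega)}\in\mathcal B$ also has an $\mathcal A$-cover), and the local splittings of $\Ker(h)\subseteq A$ are produced by inductively lifting the structural maps $M_n\to M$ to compatible maps $M_n\to A$ through the cover. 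This forces $h$ to be an isomorphism, yielding condition (3); then $(3)\Rightarrow(1)$ is obtained by thickening a Bass module over $\mathcal A^{\leq\omega}$ into a countable direct system over $\Ker(\mathfrak C)^{\leq\omega}$ via special $\mathcal B$-preenvelopes (kept countably presented by the countable type of $\mathfrak C$ together with Lemma~\ref{l:hill}), whose limit lies in $\Ker(\mathfrak C)\subseteq\mathcal A$ by (3), and extracting the Bass module itself from the resulting short exact sequence. The argument for $(5)\Rightarrow(1)$ is analogous, with the pure embedding of the Bass module into its thickening split via the $\Sigma$-pure-split property of $\Ker(\mathfrak C)$.

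The main obstacle will be the $(2)\Rightarrow(1)$ step: obtaining genuine local splittings of $\Ker(h)\subseteq A$ from the precover factorizations requires careful inductive bookkeeping, since the naive liftings $M_n\to A$ fail to be strictly compatible and one must correct them by maps into $\Ker(h)$, which in turn demands extending maps into $\mathcal B$ along the transition morphisms of the direct system. Once this is arranged, Proposition~\ref{p:locsplit} delivers the conclusion, and the remaining equivalences follow through the cycles established above.
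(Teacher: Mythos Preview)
Your overall architecture is right and tracks the paper closely, but two of the ``hard'' converses are not yet in order.

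\medskip

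\textbf{The step $(2)\Rightarrow(3)$.} Your plan is to take an $\mathcal A$-cover $h:A\to M$ and manufacture local splittings of $\Ker(h)\subseteq A$ by lifting the colimit maps $f_n:M_n\to M$ to compatible $g_n:M_n\to A$. Note first that if you actually succeeded in making the $g_n$ compatible, you would get a section of $h$ and would not need Proposition~\ref{p:locsplit} at all. More importantly, the ``correction'' you propose---adjusting $g_{n+1}$ by a map $M_{n+1}\to\Ker(h)$ so that $g_{n+1}\varphi_n=g_n$---requires extending $g_{n+1}\varphi_n-g_n:M_n\to\Ker(h)$ along $\varphi_n$. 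On $\Ker(\varphi_n)$ this difference equals $-g_n$, and there is no reason for $g_n$ to kill $\Ker(\varphi_n)$; the fact that $\Ker(h)\in\mathcal B$ and $M_n\in\mathcal A$ does not help, since $\varphi_n$ need not be monic. The paper avoids this entirely: it uses the canonical presentation
\[
0\longrightarrow\bigoplus_{i<\omega}M_i\xrightarrow{\ \mathrm{id}-\mathrm{shift}\ }\bigoplus_{i<\omega}M_i\longrightarrow M\longrightarrow 0,
\]
whose first map is automatically locally split, observes that this is a special $\mathcal A$-precover of $M$, and then invokes \cite[Theorem 1.2.7]{X} to realize the $\mathcal A$-cover as $h\!\restriction\! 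A$ for a direct summand $A$ of $\bigoplus M_i$, with $A\cap\Ker(h)$ a direct summand of $\Ker(h)$. Local splitness of $A\cap\Ker(h)\subseteq A$ is then inherited from that of $\Ker(h)\subseteq\bigoplus M_i$, and Proposition~\ref{p:locsplit} finishes.

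\medskip

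\textbf{The step $(5)\Rightarrow(1)$.} Your ``analogous'' argument does not go through: under $(5)$ alone you cannot conclude that the limit $B=\varinjlim B_n$ of the special $\mathcal B$-preenvelopes lies in $\Ker(\mathfrak C)$ (that is precisely condition $(3)$), and even if it did, the embedding $N\hookrightarrow B$ has no reason to be pure. The paper's route is $(5)\Rightarrow(6)$, and the key extra idea is a preliminary step: one first shows $\mathcal B\cap\tilde{\mathcal A}\subseteq\mathcal A$ by taking, for $M\in\mathcal B\cap\tilde{\mathcal A}$, a special $\mathcal A$-precover $f:A\to M$; then $A\in\Ker(\mathfrak C)$ and $f$ is a pure epimorphism (the given pure epimorphism from $\mathcal A$ onto $M$ factors through $f$), so $(5)$ forces $f$ to split. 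Only then does one take, for arbitrary $N\in\tilde{\mathcal A}$, a special $\mathcal B$-preenvelope $N\hookrightarrow B$: now $B\in\mathcal B\cap\tilde{\mathcal A}\subseteq\mathcal A$, hence $B$ is strict $\mathcal B$-stationary, and Lemma~\ref{l:kerepi} (with cokernel in $\mathcal A$) transfers this to $N$. Your core observation then gives $N\in\mathcal A$ when $N$ is a Bass module.

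\medskip

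The remaining equivalences in your outline---$(1)\Leftrightarrow(4)$, $(1)\Leftrightarrow(6)$, $(1)\Leftrightarrow(7)$, and $(3)\Rightarrow(4)$ via the thickening construction---match the paper's arguments.
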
 
\begin{proof} First, we recall that by \cite[Theorem~6.1]{S}, the cotorsion pair $\mathfrak C$ is of countable type, hence it is complete, and $\mathcal B$ is a definable class. Further, by Lemma \ref{l:tilde}, there is an elementary cogenerator $C$ for $\mathcal B$ with $\tilde{\mathcal A}={}^\perp C$. Finally, note that, by \cite[Lemma~4.2]{S}, any module in $\mathcal A$ is strict $\mathcal B$-stationary.

$(1)\Rightarrow (2)$. This is well known: each precovering class closed under direct limits is covering, cf.\ \cite[Theorem 2.2.8]{X}.  

$(2)\Rightarrow (3)$. Let $M$ be a countable direct limit of modules from $\Ker (\mathfrak C) = \mathcal A\cap\mathcal B$.  Then there exist modules $M_i \in \Ker (\mathfrak C)$, morphisms $g_i : M_i \to M_{i+1}$, and a pure exact sequence 
$$0 \to \bigoplus _{i<\omega} M_i \overset{g}\to \bigoplus _{i<\omega} M_i \overset{h}\to M \to 0$$ 
such that $g \restriction M_i = \mbox{id}_{M_i} - g_i$ for each $i < \omega$,  e.g.~ by \cite[Lemma 2.12]{GT}. It is easy to see that $g$ is locally split.  

By our assumption on $\mathcal B$, $h$ is a special $\mathcal A$-precover of $M$, and $M\in\mathcal B$. By $(2)$ and \cite[Theorem 1.2.7]{X}, there is a direct summand $A$ of $\bigoplus _{i<\omega} M_i$ \st $h\restriction A$ is $\mathcal A$-cover of $M$, and $A\cap\Ker(h)$ is a direct summand in $\Ker(h)$. Note that the inclusion $A\cap\Ker(h)\subseteq A$ inherits the property of being locally split from $g$. By Proposition~\ref{p:locsplit}, we conclude that $M\cong A\in\mathcal A\cap\mathcal B$.

$(3)\Rightarrow (4)$. By Lemma~\ref{l:countlim}, each $M\in (\varinjlim \mathcal A)^{\leq\omega}$ is a direct limit of a countable direct system, $(A_n\mid n<\omega)$, of modules from $\mathcal A$. We expand this direct system, to a direct system of short exact sequences induced by special $\mathcal B$-preenvelopes. Its direct limit is a short exact sequence $0\to M\to B\to A\to 0$ where $B\in\mathcal A\cap\mathcal B$ by $(3)$, and $A\in\varinjlim\mathcal A$. Then $B$ is strict $\mathcal B$-stationary, and $A\in {}^\perp D$ for each pure-injective module $D \in \mathcal B$ by Lemma~\ref{l:tilde}. So $M$ is strict $\mathcal B$-stationary by Lemma~\ref{l:kerepi}.

$(4)\Rightarrow (1)$. Since $(\varinjlim \mathcal A)^{\leq\omega}\subseteq {}^\perp C$, it follows from $(4)$ and Proposition~\ref{p:pureinprod} that $(\varinjlim \mathcal A)^{\leq\omega}\subseteq\mathcal A$. So $\mathfrak C$ is closed by Theorem \ref{t:bigchar1}. 

$(1)\Rightarrow (5)$. This is clear, since $\mathcal B$ is definable, and $\mathcal A$ is closed under pure-epimorphic images by Theorem \ref{t:bigchar1}.

$(5)\Rightarrow (6)$. First, notice that $\mathcal B\cap\tilde{\mathcal A} \subseteq \mathcal A$: Indeed, if $M \in \mathcal B\cap\tilde{\mathcal A}$, and $f : A \to M$ is a special $\mathcal A$-precover of $M$, then $f$ is a pure epimorphism and $A \in \mathcal A\cap\mathcal B$, hence $f$ splits by $(5)$.   

Next, for a module $N\in\tilde{\mathcal A}$, we form a special $\mathcal B$-preenvelope $g : N \to B$. Then $B \in \mathcal A$ by the previous argument. In particular, $B$ is strict $\mathcal B$-stationary, and $N$ is strict $\mathcal B$-stationary by Lemma~\ref{l:kerepi}. So $N \in \mathcal L$ by Theorem \ref{t:localstat}. Conversely, $\mathcal L \subseteq \varinjlim \mathcal A ^{\leq\omega} \subseteq \tilde{\mathcal A}$, whence $(6)$ holds. 

$(6)\Rightarrow (7)$. This follows from the fact that $\tilde{\mathcal A}$ is a covering class.

$(7)\Rightarrow (4)$. Let $M \in (\varinjlim \mathcal A)^{\leq\omega}$. Then $M$ is a Bass module over $\mathcal A^{\leq\omega}$ by Lemma~\ref{l:countlim}. By $(7)$ and \cite[Lemma~3.2]{S}, $M$ is (a direct summand of) an element of $\mathcal L$, so $M$ is $\mathcal B$-stationary by Theorem \ref{t:localstat}.     
\end{proof}

\begin{rem} \label{r:C} Notice that by the proof above, the condition in $(4)$ can be relaxed further to assuming the stationarity with respect to a single (pure-injective) module~$C$. \end{rem}

As an application, we prove a generalization of \cite[Theorem 4.6]{AST}.

\begin{prop} \label{p:AST} Let $\mathfrak D = (\mathcal F,\mathcal G)$ be a cotorsion pair with $\mathcal F$ closed under direct limits and $\mathcal G$ closed under countable direct sums. Let us denote by $\mathcal C$ the class of all FP$_2$-modules from $\mathcal F$. Then $\mathfrak D$ is of finite type if and only if $\mathcal F = \varinjlim \mathcal C$.
\end{prop}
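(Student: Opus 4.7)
The plan is to recast the claim as the equivalence $\mathcal G = \mathcal C_0^{\perp}$ (for a representative set $\mathcal C_0 \subseteq \mathcal C$) if and only if $\mathcal F = \varinjlim \mathcal C$. The inclusion $\mathcal G \subseteq \mathcal C_0^{\perp}$ is immediate from $\mathcal C_0 \subseteq \mathcal F$, and the reverse inclusion is precisely the condition that $\mathfrak D$ is generated by a set of FP$_2$-modules in $\mathcal F$, i.e.\ of finite type.

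For the direction ``finite type $\Rightarrow \mathcal F = \varinjlim \mathcal C$'', I would pick a generating set $\mathcal S$ of FP$_2$-modules, noting that $\mathcal S \subseteq {}^\perp\mathcal G = \mathcal F$ forces $\mathcal S \subseteq \mathcal C$. Eklof--Trlifaj gives that every $M \in \mathcal F$ is a direct summand of some $\mathcal S$-filtered module $M'$. Using Lemma~\ref{l:hill} together with the closure of the class of FP$_2$-modules under extensions, I would rewrite $M'$ as the directed union of its finite iterated subextensions from $\mathcal S$, each of which is an FP$_2$-module in $\mathcal F$, hence in $\mathcal C$; thus $M' \in \varinjlim \mathcal C$. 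The passage from $M'$ to its summand $M$ is then handled by a Lazard-type argument exploiting closure of $\mathcal F$ under direct limits.

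For the direction ``$\mathcal F = \varinjlim \mathcal C \Rightarrow$ finite type'', take $N \in \mathcal C_0^{\perp}$ and $M \in \mathcal F$ written as $M = \varinjlim_{i \in I} C_i$ with $C_i \in \mathcal C$. The FP$_2$ hypothesis on $\mathcal C_0$ forces $\mathcal C_0^{\perp}$ to be closed under both direct limits and arbitrary direct sums---this is the key algebraic input. Reducing to countable cofinal subsystems and applying the standard telescope pure exact sequence, Eklof's lemma together with the closure of $\mathcal G$ under countable direct sums yields $\Ext^1_R(\varinjlim_{n<\omega} C_{i_n}, N) = 0$, and a transfinite induction along a cofinal well-ordered chain of countable subsystems extends this to $M$.

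The main obstacle I anticipate is the summand step in the forward direction: the class $\varinjlim \mathcal C$ is not evidently closed under direct summands, so one must realize a summand of an element of $\varinjlim \mathcal C$ as itself a direct limit of modules in $\mathcal C$. This is where the interplay between the cardinality of the filtration, Hill's lemma, and pure-exactness considerations becomes delicate; the closure of $\mathcal G$ under countable direct sums is what keeps the argument afloat in the FP$_2$ (rather than finitely presented) setting.
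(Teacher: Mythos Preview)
Your forward direction is more laborious than necessary but essentially fine; the summand worry is a non-issue, since for a class $\mathcal C$ of finitely presented modules closed under finite direct sums, Lenzing's characterization of $\varinjlim\mathcal C$ (via factorization of maps from finitely presented modules) immediately gives closure under pure submodules, hence under direct summands. The paper dispatches this direction in one line.

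The genuine gap is in your converse. From the telescope sequence
\[
0 \longrightarrow \bigoplus_{n<\omega} C_{i_n} \longrightarrow \bigoplus_{n<\omega} C_{i_n} \longrightarrow \varinjlim_{n<\omega} C_{i_n} \longrightarrow 0
\]
and $\Ext^1_R(C_{i_n},N)=0$ you only obtain $\Ext^1_R(\varinjlim C_{i_n},N)\cong \varprojlim{}^1 \Hom_R(C_{i_n},N)$. To kill this you need the inverse system to be Mittag--Leffler, i.e.\ you need the direct system to be $N$-stationary. You invoke ``closure of $\mathcal G$ under countable direct sums'', but that hypothesis concerns $\mathcal G$, whereas $N$ lies in $\mathcal C_0^\perp$; you have not shown $N\in\mathcal G$ (indeed, that is equivalent to what you are trying to prove), nor have you shown that $\mathcal G$-stationarity implies $N$-stationarity. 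Eklof's lemma does not bridge this: it handles filtrations, not direct limits. The subsequent ``transfinite induction along a cofinal well-ordered chain of countable subsystems'' inherits the same gap and is in any case too vague to assess.

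The paper's argument is structurally different and avoids this trap. It introduces the auxiliary cotorsion pair $\mathfrak C=(\mathcal A,\mathcal B)$ generated by $\mathcal C$, identifies $\mathcal F$ with the class $\tilde{\mathcal A}$ of pure-epimorphic images of modules from $\mathcal A$, and then applies Theorem~\ref{t:bigchar2} to show $\mathcal A=\tilde{\mathcal A}$. The key step is verifying condition~(4) there: countably presented modules in $\mathcal F$ are $C$-stationary, where $C$ is an elementary cogenerator for $\mathcal B$. The crucial observation you are missing is that $C\in\mathcal G$: since $\mathcal F=\tilde{\mathcal A}={}^\perp C$ by Lemma~\ref{l:tilde}, one has $C\in\mathcal F^\perp=\mathcal G$. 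Now the hypothesis on $\mathcal G$ is used exactly once, via Proposition~\ref{p:pureinprod}: a countably presented $M\in\mathcal F={}^\perp\mathcal G$ with $\mathcal G$ closed under countable direct sums is $\mathcal G$-stationary, hence $C$-stationary. This is precisely the Mittag--Leffler input your telescope computation needed, but routed through the elementary cogenerator rather than attempted directly for an arbitrary $N\in\mathcal C_0^\perp$.
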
 

\begin{proof} If $\mathfrak D$ is of finite type, then $\mathcal F = \varinjlim \mathcal C$ since $\mathcal F$ is closed under direct limits.

Conversely, let $\mathcal F = \varinjlim \mathcal C$. Let us denote by $\mathfrak C = (\mathcal A, \mathcal B)$ the cotorsion pair generated by the class $\mathcal C$. Then $\mathcal F$ coincides with the class $\tilde{\mathcal A}$ of all pure-epimorphic images of modules from $\mathcal A$. So, using Lemma~\ref{l:tilde}, $\mathcal F = {}^\perp C$ for an elementary cogenerator $C$ for $\mathcal B$.

To show that $\mathcal F = \mathcal A$, we verify the statement $(4)$ from Theorem~\ref{t:bigchar2}. By Remark~\ref{r:C} above, it is enough to observe that countably presented modules from $\mathcal F$ are $C$-stationary. However, it follows from Proposition~\ref{p:pureinprod} that they are $\mathcal G$-stationary, since $\mathcal G$ is closed under countable direct sums.
\end{proof}

In what follows, we will see that statements contained in Theorem~\ref{t:bigchar2} actually generalize (part of) the famous Theorem~P by Hyman Bass. The next lemma sheds more light on the structure of $\Ker (\mathfrak C)$, showing that it is quite similar to the tilting setting:

\begin{lem} \label{l:kernel} Let $\mathfrak C = (\mathcal A, \mathcal B)$ be a cotorsion pair with $\mathcal B$ closed under direct limits. Then $\Ker (\mathfrak C) = \Add (K)$ for a module $K$.
\end{lem}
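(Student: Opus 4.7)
\medskip

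\noindent\textbf{Plan.} My plan is to construct $K$ explicitly as a special $\mathcal{A}$-precover of a pure-injective elementary cogenerator $C$ of $\mathcal{B}$, and then verify both inclusions $\Add(K) \subseteq \Ker(\mathfrak{C})$ and $\Ker(\mathfrak{C}) \subseteq \Add(K)$.

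First I invoke \cite[Theorem 6.1]{S} to get that $\mathfrak{C}$ is complete and of countable type, and that $\mathcal{B}$ is definable. Fixing a pure-injective elementary cogenerator $C$ for $\mathcal{B}$, completeness of $\mathfrak{C}$ furnishes a special $\mathcal{A}$-precover $0 \to B \to K \to C \to 0$ with $K \in \mathcal{A}$ and $B \in \mathcal{B}$. As $\mathcal{B}$ is closed under extensions (being definable, hence closed under direct limits and products) and both $B,C \in \mathcal{B}$, we get $K \in \mathcal{B}$, whence $K \in \mathcal{A} \cap \mathcal{B} = \Ker(\mathfrak{C})$. The containment $\Add(K) \subseteq \Ker(\mathfrak{C})$ then follows immediately from closure under summands and direct sums: $\mathcal{A}$ is closed under arbitrary direct sums as the left-hand class of a cotorsion pair, while $\mathcal{B}$ is closed under direct sums by virtue of being closed under direct limits.

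For the reverse inclusion $\Ker(\mathfrak{C}) \subseteq \Add(K)$, I take $X \in \mathcal{A} \cap \mathcal{B}$. The elementary cogenerator property gives a pure embedding $X \hookrightarrow C^\kappa$ for some cardinal $\kappa$. Taking $\kappa$-fold products yields a short exact sequence $0 \to B^\kappa \to K^\kappa \to C^\kappa \to 0$, and the Ext-vanishing $\Ext^1(X, B^\kappa) = 0$ (which holds because $X \in \mathcal{A}$ and $B^\kappa \in \mathcal{B}$) allows the pure embedding to lift to a map $X \to K^\kappa$. This lift is still a pure monomorphism, since its composition with the surjection $K^\kappa \twoheadrightarrow C^\kappa$ is pure.

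I expect the main obstacle to be upgrading this pure embedding into the \emph{product} $K^\kappa$ to an actual split embedding into a \emph{direct sum} $K^{(I)}$, which is what membership in $\Add(K)$ demands. Bridging products and direct sums in this way typically requires either a more refined construction of $K$---for instance, enlarging it to the direct sum over a representative set of countably presented modules in $\Ker(\mathfrak{C})$, which is a set thanks to the countable type of $\mathfrak{C}$---or an application of Hill's lemma (Lemma~\ref{l:hill}) to an $\mathcal{A}^{\leq\omega}$-filtration of $X$, decomposing $X$ into countably presented summands already lying in $\Ker(\mathfrak{C})$ and hence absorbed into $\Add(K)$ by the refined choice of generator.
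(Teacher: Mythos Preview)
Your pure-embedding approach is sound as far as it goes, but it genuinely stalls where you say it does, and the fixes you sketch do not quite close the gap. Two concrete problems:

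\begin{itemize}
\item Your initial choice of $K$ (a special $\mathcal{A}$-precover of the elementary cogenerator $C$) yields only a pure embedding $X \hookrightarrow K^\kappa$. Since $K^\kappa$ need not lie in $\mathcal{A}$, let alone in $\Add(K)$, and since $X$ is not known to be pure-injective, there is no evident way to split this embedding or to pass to a copower. This $K$ may simply fail to satisfy $\Ker(\mathfrak C)=\Add(K)$; you effectively abandon it in your last paragraph.
\item Your suggested repair --- take $K$ to be the direct sum over a representative set of \emph{countably} presented modules in $\Ker(\mathfrak C)$ and use Hill's lemma on an $\mathcal{A}^{\le\omega}$-filtration of $X$ --- runs into trouble when $R$ is uncountable. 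Countable type gives you filtrations of $X$ with countably presented pieces in $\mathcal{A}$, but there is no reason those pieces are \emph{pure} in $X$, hence no reason they lie in $\mathcal{B}$. Purity of a submodule of controlled size requires size at least $|R|+\aleph_0$ (see \cite[Lemma~2.25(a)]{GT}), so ``countably presented'' is the wrong bound.
\end{itemize}

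The paper's proof follows the second idea but with the correct cardinal $\mu=|R|+\aleph_0$, and it executes the filtration argument rather than the pure-embedding one. One sets $K=\bigoplus\mathcal{K}$ where $\mathcal{K}$ is a representative set of all $\le\mu$-presented modules in $\Ker(\mathfrak C)$, and then shows by induction on the presentation cardinality $\lambda$ of $M\in\Ker(\mathfrak C)$ that $M$ is $\mathcal{K}$-filtered (hence a direct sum of modules from $\mathcal{K}$, since all extensions between members of $\Ker(\mathfrak C)$ split). For regular $\lambda>\mu$, one builds a continuous chain $(M_\alpha)_{\alpha\le\lambda}$ by \emph{alternating}: at odd successor stages one enlarges inside the Hill family coming from an $\mathcal{A}^{<\lambda}$-filtration (securing membership in $\mathcal{A}$), and at even successor stages one passes to a pure submodule of size $<\lambda$ (securing membership in $\mathcal{B}$ via definability). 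The cofinal subchain indexed by limit ordinals then lies in $\Ker(\mathfrak C)$ with $<\lambda$-presented consecutive factors in $\Ker(\mathfrak C)$, and induction applies. The singular case is handled by a singular compactness argument (\cite[Theorem~7.29]{GT}). Your proposal gestures at Hill's lemma but misses both the alternation with pure submodules and the correct cardinal bound; these are the essential technical ingredients.
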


\begin{proof} Put $\mu = |R|+\aleph _0$, and let $K$ be the direct sum of a representative set of all $\leq\!\mu$-presented modules in $\Ker (\mathfrak C)$. Let us denote this representative set by $\mathcal K$. We have to show that each $M\in\Ker (\mathfrak C)$ is (isomorphic to) a direct sum of $\leq\!\mu$-presented modules, which is equivalent to $M$ being $\mathcal K$-filtered.

We will prove this by induction on $\lambda$ where $\lambda$ is the minimal cardinal such that $M$ is $\lambda$-presented. It holds trivially for $\lambda\leq\mu$. Assume that $\lambda>\mu$ is regular. Since $M\in\mathcal A$, $M$ is $\mathcal A^{<\lambda}$-filtered by \cite[Theorem~6.1]{S} and \cite[Theorem 7.13]{GT}. Fix one such filtration of $M$ and use Lemma~\ref{l:hill} with $\mathcal S=\mathcal A^{<\lambda}$ to obtain a family $\mathcal F$ of submodules of $M$. Note that $\mathcal F\subseteq\mathcal A$. We build a continuous chain $(M_\alpha\mid \alpha\leq\lambda)$ of submodules of $M$ \st $M_0 = 0, M_\lambda = M$ and $M_\alpha$ are $<\!\lambda$-presented, for all $\alpha<\lambda$, as follows:

\smallskip

For $\alpha$ limit, we put $M_\alpha = \bigcup_{\beta<\alpha} M_\beta$. Let $\alpha=\beta + 1$. If $\alpha$ is odd, we define $M_\alpha$ as a member from $\mathcal F$ given by Lemma~\ref{l:hill}(4) for $N=0$ and $X=M_\beta$.

Now, let $\alpha$ be even. We choose $M_\alpha$ as a pure submodule of $M$ containing $M_\beta$ with $|M_\alpha| \leq |M_\beta|+\mu<\lambda$ (it is possible by \cite[Lemma 2.25(a)]{GT}). Then $M_\alpha\in\mathcal B$ since $M\in\mathcal B$ and $\mathcal B$ is definable. Note, however, that $M_\alpha$ need not be a member of~$\mathcal F$.

Consider the subchain $\mathcal C = (M_\alpha\mid \alpha\leq\lambda, \alpha\hbox{ is limit})$. Using the properties of $\mathcal B$ and $\mathcal F$, we see that each member of $\mathcal C$ as well as each of its consecutive factors belongs to $\Ker (\mathfrak C)$. The consecutive factors are $<\!\lambda$-presented, and we can use the inductive assumption to deduce that $M$ is $\mathcal K$-filtered.

\smallskip

If $\lambda$ is singular, we use \cite[Theorem 7.29]{GT}. As the sets $\mathcal S_\kappa$, $\mu<\kappa<\lambda$ regular, witnessing $\kappa$-$\mathcal K$-freeness of $M$, we choose $\mathcal F\cap\mathcal B^{<\kappa}$, where $\mathcal F$ is the family given by Lemma~\ref{l:hill} used for the regular cardinal $\kappa$ and $\mathcal S = \mathcal A^{<\kappa}$. It is straightforward to verify that $\mathcal S_\kappa$ has the desired properties stated in \cite[Definition 7.27]{GT}.
%By Theorem~\ref{t:counttype}, $\mathcal A$ is $\lambda^+$-deconstructible (even $\aleph _1$-deconstructible) and $\mathcal B$ is definable, in particular, $\mathcal B$ is closed under pure submodules. Using alternatively Lemma~\ref{l:hill} and purification, we infer that $\Ker\mathfrak C$ is $\lambda^+$-deconstructible, whence each module from $\Ker(\mathfrak C)$ decomposes into a direct sum of its $\leq\!\lambda$-presented submodules.
\end{proof}

Recall that a module $M$ has a \emph{perfect decomposition} if  it has a decomposition in modules with local endomorphism ring, and every module $N\in\Add (M)$ has a \emph{semiregular} endomorphism ring $S_N=\End_R(N)$, i.e., idempotents lift modulo the Jacobson radical $J(S_N)$, and $S_N/J(S_N)$ is a von Neumann regular ring.

The notion of perfect decomposition has many equivalent definitions, cf.\ \cite{AS}. In particular, $M$ has perfect decomposition 
if and only if every pure monomorphism from any direct sum $\bigoplus _{\alpha\in I}N_\alpha$ into a module from $\Add (M)$ splits whenever all of its finite subsums split. For example, every $\Sigma$-pure-split module has a perfect decomposition.

\medskip
Our next corollary extends (part of) Bass' Theorem P (which is the case of $\mathfrak C = (\mathcal P_0,\ModR)$ and $K=R$), and more in general, it also covers the tilting setting (for $K = T$ a tilting module). 

\begin{cor}\label{c:P} Let $R$ be a ring, and $\mathfrak C=(\mathcal A,\mathcal B)$ be a cotorsion pair with $\mathcal B = \varinjlim\mathcal B$. Let $K$ be a module \st $\Ker(\mathfrak C) = \Add(K)$ (see Lemma~\ref{l:kernel}). Then the following conditions are equivalent:
\begin{enumerate}
\item $\mathcal A = \varinjlim\mathcal A$;
\item every module (in $\mathcal B$) has an $\mathcal A$-cover;
\item every module in $\Ker(\mathfrak C)$ has a semiregular endomorphism ring;
\item $K$ has perfect decomposition;
\item $K$ is $\Sigma$-pure-split;
\item every module (in $\mathcal B$) has a $\Ker(\mathfrak C)$-cover.
\end{enumerate}
\end{cor}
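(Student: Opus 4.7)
The plan is to arrange the six conditions into a cycle of implications, using Theorem~\ref{t:bigchar2} as the engine for the direct-limit/$\Sigma$-pure-split/cover content of the statement, and appealing to standard facts about perfect decompositions from \cite{AS} for the endomorphism-ring/decomposition content.

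The equivalences $(1) \Leftrightarrow (2) \Leftrightarrow (5)$ come directly from Theorem~\ref{t:bigchar2} once one notes that ``$\Ker(\mathfrak C) = \Add(K)$ consists of $\Sigma$-pure-split modules'' and ``$K$ is $\Sigma$-pure-split'' say the same thing: $\Sigma$-pure-splitness is defined in terms of $\Add$ and so passes freely between $K$ and any $N \in \Add(K)$. The step $(5) \Rightarrow (4)$ is then the fact recalled in Section~\ref{sec:prelim} that every $\Sigma$-pure-split module admits a perfect decomposition, and $(4) \Rightarrow (3)$ is just the definition of perfect decomposition as recalled in the paragraph before the corollary. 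For the reverse $(3) \Rightarrow (5)$, I would invoke a criterion from \cite{AS}: if $\End_R(N)$ is semiregular for every $N \in \Add(K)$, then every locally split monomorphism into a module from $\Add(K)$ splits, which by the alternative characterization of perfect decomposition (second paragraph before the statement) is precisely $\Sigma$-pure-splitness of $K$.

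For the cover condition, $(1) \Rightarrow (6)$ is immediate: $\Add(K) = \mathcal A \cap \mathcal B$ is precovering and, being the intersection of two direct-limit-closed classes, is itself closed under direct limits, so by \cite[Theorem 2.2.8]{X} it is covering. Conversely $(6) \Rightarrow (1)$ can be obtained by mimicking the proof of $(2) \Rightarrow (3)$ in Theorem~\ref{t:bigchar2}: given a countable direct limit $M = \varinjlim A_n$ with $A_n \in \Ker(\mathfrak C)$, present $M$ through the pure exact sequence $0 \to \bigoplus A_n \to \bigoplus A_n \to M \to 0$ (whose first map is locally split), apply an $\Add(K)$-cover of $M$ (available by (6), since $M \in \mathcal B$ because $\mathcal B$ is closed under direct limits), and split off the kernel via Proposition~\ref{p:locsplit}. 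This is legitimate because $\bigoplus A_n \in \Add(K)$, so the $\Add(K)$-cover plays the role of an $\mathcal A$-cover in this instance. The conclusion $M \in \Add(K)$ yields condition~(3) of Theorem~\ref{t:bigchar2}, hence~(1).

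The main obstacle is the step $(3) \Rightarrow (5)$: extracting a pure-split splitting property from the semiregularity hypothesis on all endomorphism rings in $\Add(K)$ is the one genuinely nontrivial input, for which the language and results of \cite{AS} on perfect decompositions are essential. Everything else is bookkeeping around Theorem~\ref{t:bigchar2} and the Wakamatsu-style splitting already set up in Proposition~\ref{p:locsplit}.
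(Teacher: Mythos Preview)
Your cycle has a genuine gap at $(3)\Rightarrow(5)$. You claim that ``every locally split monomorphism into a module from $\Add(K)$ splits'' is ``precisely $\Sigma$-pure-splitness of $K$'', invoking the alternative characterisation of perfect decomposition stated before the corollary. But that characterisation says: every pure monomorphism $\bigoplus N_\alpha\hookrightarrow N$ with $N\in\Add(K)$ splits \emph{provided all its finite subsums split}. This is strictly weaker than $\Sigma$-pure-splitness, which asks that \emph{every} pure embedding into $\Add(K)$ split. The paper itself signals the gap when it says ``every $\Sigma$-pure-split module has a perfect decomposition'' as an \emph{example}; the converse is false in general (any indecomposable module with local endomorphism ring has a perfect decomposition). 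So from $(3)$ the results of \cite{AS} give you $(4)$, not $(5)$, and your cycle does not close.

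The paper avoids this by routing $(3)\Rightarrow(6)\Rightarrow(2)$ instead. For $(3)\Rightarrow(6)$ it quotes \cite[Proposition~4.1]{A}: semiregularity of $\End_R(N)$ for all $N\in\Add(K)$ makes $\Add(K)$ a covering class. For $(6)\Rightarrow(2)$ it observes that a $\Ker(\mathfrak C)$-cover of $B\in\mathcal B$ is automatically an $\mathcal A$-cover, because a special $\mathcal A$-precover of $B$ already has domain in $\Ker(\mathfrak C)$ (so is a $\Ker(\mathfrak C)$-precover), and one applies \cite[Theorem~1.2.7]{X}. Your $(6)\Rightarrow(1)$ via Proposition~\ref{p:locsplit} is not wrong, but it re-proves the implication $(2)\Rightarrow(3)$ of Theorem~\ref{t:bigchar2} inside the argument; the paper's one-line $(6)\Rightarrow(2)$ is cleaner and feeds directly back into Theorem~\ref{t:bigchar2}. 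Your $(1)\Rightarrow(6)$ is fine.
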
 
\begin{proof} The first two conditions are the same as in Theorem \ref{t:bigchar2}. Condition $(5)$ is equivalent to the same condition there. $(4)\Rightarrow (3)$ is trivial.

$(5)\Rightarrow (4)$ follows by the equivalent definition mentioned above.

$(3)\Rightarrow (6)$ follows from \cite[Proposition 4.1]{A} and the fact that $\Ker(\mathfrak C) = \Add(K)$.

$(6)\Rightarrow (2)$. Observe that any $\Ker(\mathfrak C)$-cover of a module $B\in\mathcal B$ is, in fact, an $\mathcal A$-cover: this follows, for instance, from \cite[Theorem 1.2.7]{X} applied to a special $\mathcal A$-precover (hence $\Ker(\mathfrak C)$-precover) of $B$.
\end{proof}

The condition $(5)$ above cannot be replaced by `$K$ is $\Sigma$-pure-injective.' While each $\Sigma$-pure-injective module is $\Sigma$-pure-split, the converse is not true in general, even for tilting modules: If $R$ is right perfect, then $K = R$ is certainly $\Sigma$-pure-split, but it need not be $\Sigma$-pure-injective. In fact, \cite[\S2]{Z1} contains an example of a right artinian ring which is not right pure-injective.       

There is, however, a case where $\Sigma$-pure splitting and $\Sigma$-pure-injectivity coincide, namely when the cotorsion pair $(\mathcal A,\mathcal B)$ has the property that $\mathcal A ^{< \omega}$ is covariantly finite. This is always true when $R$ is left noetherian and $\mathcal A ^{< \omega}$ consists of modules of projective dimension $\leq 1$ (see \cite[Proposition 2.5]{AHT2}). A further case is provided by the following observation.

\begin{lem}\label{l:covarfinhered}
Let $R$ be a left hereditary ring, and let $\mathcal C$ be a   class consisting of $FP_2$-modules, such that  $\mathcal C$ is closed under extensions, direct summands, and contains $R$. Then $\mathcal C$ is  covariantly finite.
\end{lem}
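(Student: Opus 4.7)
The plan is to show that every finitely presented right $R$-module $M$ admits a $\mathcal C$-preenvelope, by combining the cotorsion-pair machinery generated by $\mathcal C$ with an Auslander--Bridger transpose duality argument; the left hereditary hypothesis will be used essentially in the final step.

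First, I would set up the cotorsion pair $(\mathcal X,\mathcal Y)=({}^{\perp_1}(\mathcal C^{\perp_1}),\mathcal C^{\perp_1})$ generated by $\mathcal C$. Since $\mathcal C$ is (up to isomorphism) a set of $FP_2$-modules, Eklof--Trlifaj yields that the pair is complete and $\mathcal Y$ is closed under direct limits. Next I would verify that $\mathcal X\cap\modR = \mathcal C$: every $A\in\mathcal X$ is a direct summand of an $\mathcal C$-filtered module, and for finitely presented $A$ an application of Hill's lemma (Lemma~\ref{l:hill}) permits the filtration to be refined and trimmed to a finite one whose factors lie in $\mathcal C$, placing $A$ in $\mathcal C$ by closure under extensions and direct summands.

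For finitely presented $M$, the $\mathcal C$-preenvelope is then produced by transferring the problem to the left side. Under the left hereditary hypothesis the Auslander--Bridger transpose $\mathrm{Tr}(M)$ is a finitely presented left $R$-module of projective dimension at most one, since finitely generated submodules of finitely generated projective left modules are automatically finitely generated projective (so in a presentation of $\mathrm{Tr}(M)$ coming from dualizing a presentation of $M$, the first syzygy splits off as a projective summand). On the left-module side one constructs a precover in the corresponding class $\mathrm{Tr}(\mathcal C)$, which inherits from $\mathcal C$ closure under extensions and direct summands, via completeness of the cotorsion pair it generates; transposing back then produces the desired short exact sequence $0\to M\to C\to C'\to 0$ with $C,C'\in\mathcal C$ that witnesses a special $\mathcal C$-preenvelope of $M$.

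The main obstacle will be this final transpose-duality step: Auslander--Bridger transpose is only an equivalence up to projective summands, so careful bookkeeping is required to transfer preenvelope properties between right and left sides and ensure the resulting morphism preenvelops into all of $\mathcal C$ rather than merely into a cofinal subclass. The left hereditary hypothesis is what makes the transfer go through, substituting for the left noetherian hypothesis in \cite[Proposition 2.5]{AHT2}: it supplies just enough finite generation after dualization, via the fact that finitely generated submodules of finitely generated projective left modules are projective, to control the sizes of the modules arising in the transpose construction without invoking noetherianity.
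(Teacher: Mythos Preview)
Your transpose step has a genuine gap, and it is not just a matter of bookkeeping. Completeness of the cotorsion pair generated by $\mathrm{Tr}(\mathcal C)$ on the left only yields a special precover $0\to K\to A\to \mathrm{Tr}(M)\to 0$ with $A$ in the class ${}^{\perp}\bigl((\mathrm{Tr}\,\mathcal C)^{\perp}\bigr)$, not in $\mathrm{Tr}(\mathcal C)$ itself; getting $A$ into $\mathrm{Tr}(\mathcal C)$ is precisely a covariant-finiteness statement of the same type you are trying to prove, so the argument is circular as stated. Even granting that, transposing back does not give a sequence $0\to M\to C\to C'\to 0$ with \emph{both} $C,C'\in\mathcal C$: you would obtain $C'=\mathrm{Tr}(K)$, and there is no reason for this to lie in $\mathcal C$. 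And a sequence with $C,C'\in\mathcal C$ would in any case not witness a $\mathcal C$-preenvelope unless $C'\in{}^{\perp}\mathcal C$, which you have not arranged. The analogy with \cite[Proposition 2.5]{AHT2} is misleading: there the hypothesis is that the modules in $\mathcal C$ themselves have projective dimension $\le 1$, which controls $\mathrm{Tr}$ on $\mathcal C$; here the left hereditary hypothesis controls $\mathrm{Tr}(M)$, which is on the wrong side of the duality for your purposes.

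The paper's proof avoids all of this by a completely different route. It never constructs an explicit preenvelope. Instead it shows that $\varinjlim\mathcal C$ is closed under direct products and then invokes Crawley-Boevey's theorem. Concretely, one writes $\varinjlim\mathcal C=\Ker\Tor_1^R(-,\mathcal A)$ for the Tor-orthogonal $\mathcal A=\Ker\Tor_1^R(\mathcal C,-)$ in $\RMod$; the left hereditary hypothesis is used, via \cite[Lemma 9.7]{GT}, to conclude $\mathcal A=\varinjlim\mathcal A^{<\omega}$, so that $\varinjlim\mathcal C=\Ker\Tor_1^R(-,\mathcal A^{<\omega})$, a class visibly closed under products. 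This is where left hereditary enters: it guarantees that the relevant class of \emph{left} modules is determined by its finitely presented members, which is exactly what is needed for product-closure of the Tor-orthogonal on the right.
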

\begin{proof} By \cite[Theorem 8.40]{GT}, we can compute $\varinjlim\mathcal C$ as  $\Ker\Tor^R_1(-,\mathcal A)$ where $\mathcal A=\Ker\Tor^R_1(\mathcal C,-)$. Now $\mathcal A$ is closed under direct limits, and by the Ext-Tor-relations, it is the left-hand class of a cotorsion pair of left $R$-modules. By the assumption on $R$ and \cite[Lemma 9.7]{GT}, $\mathcal A = \varinjlim\mathcal A^{<\omega}$, whence $\varinjlim\mathcal C = \Ker\Tor^R_1(-,\mathcal A^{<\omega})$. It yields that $\varinjlim\mathcal C$ is closed under direct products. By the classic result of Crawley-Boevey, this amounts to $\mathcal C$ being covariantly finite in the class of all finitely presented right modules.  \end{proof}

We state the next auxiliary result in a more general setting:

\begin{lem} \label{l:covarfin} Let $\mathcal C$ be a covariantly finite subcategory of the category of all finitely presented modules, and $T\in\varinjlim\mathcal C$ be a module. Assume that every countable direct system in $\mathcal C$ is $T$-stationary. Then $T$ is $\Sigma$-pure-injective.
\end{lem}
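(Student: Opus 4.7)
The plan is to reduce the problem to a classical characterization of $\Sigma$-pure-injectivity. Specifically, I aim to show that every countable direct system of finitely presented modules (not merely those in $\mathcal C$) is $T$-stationary; it is then a standard fact going back to Zimmermann and Gruson--Jensen (cf.\ e.g.\ \cite{AH}) that this property on all countable direct systems of finitely presented modules is equivalent to $T$ being $\Sigma$-pure-injective.

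To carry out the reduction, fix a countable direct system $(F_n, f_{n+1,n})_{n<\omega}$ of finitely presented modules. Using the covariant finiteness of $\mathcal C$, choose a left $\mathcal C$-approximation $g_n\colon F_n \to C_n$ for each $n$. The composite $g_{n+1}\circ f_{n+1,n}\colon F_n \to C_{n+1}$ lands in an object of $\mathcal C$, hence factors through $g_n$: there exists $h_{n+1,n}\colon C_n \to C_{n+1}$ with $h_{n+1,n}\circ g_n = g_{n+1}\circ f_{n+1,n}$. The modules $C_n$ together with the transitions $h_{n+1,n}$ form a countable direct system in $\mathcal C$, and $(g_n)_{n<\omega}$ is a morphism of direct systems from $(F_n)$ to $(C_n)$. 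By hypothesis, the inverse system $(\Hom_R(C_n,T), h_{k,n}^*)$ satisfies the Mittag-Leffler condition.

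The crucial point is that $g_n^*:=\Hom_R(g_n,T)\colon \Hom_R(C_n,T)\to \Hom_R(F_n,T)$ is surjective for every $n$. Indeed, given $\psi\in \Hom_R(F_n,T)$, since $T\in\varinjlim\mathcal C$ and $F_n$ is finitely presented, $\psi$ factors as $F_n\to D\to T$ for some $D\in\mathcal C$; the $\mathcal C$-approximation property of $g_n$ then factors the resulting map $F_n\to D$ through $g_n$, placing $\psi\in\Img(g_n^*)$. Combining this surjectivity with the commutative squares $h_{k,n}\circ g_n = g_k\circ f_{k,n}$, a short diagram chase yields $\Img(f_{k,n}^*) = g_n^*\bigl(\Img(h_{k,n}^*)\bigr)$. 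Since the right-hand side stabilizes for $k$ large, so does the left, proving $T$-stationarity of the original system.

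The only non-routine ingredient is the appeal to the classical characterization of $\Sigma$-pure-injectivity via the Mittag-Leffler property on all countable inverse Hom-systems of finitely presented modules; everything else is a diagram chase combined with covariant finiteness and the direct-limit description of $T$, so I do not anticipate serious technical difficulty beyond locating the precise reference for the classical fact.
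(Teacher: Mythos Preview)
Your proof is correct and follows essentially the same approach as the paper: build a parallel direct system in $\mathcal C$ via $\mathcal C$-preenvelopes, use $T\in\varinjlim\mathcal C$ together with finite presentation of the $F_n$ (Lenzing's characterization) to get surjectivity of the maps $g_n^*$, and then transfer the Mittag-Leffler condition from the $(C_n)$-system to the $(F_n)$-system, finishing with the classical characterization of $\Sigma$-pure-injectivity. The only cosmetic difference is that the paper quotes Grothendieck's result that an epimorphic image of a Mittag-Leffler inverse system is Mittag-Leffler, whereas you carry out that step directly via the identity $\Img(f_{k,n}^*)=g_n^*\bigl(\Img(h_{k,n}^*)\bigr)$ (which indeed follows from $f_{k,n}^*\circ g_k^* = g_n^*\circ h_{k,n}^*$ and the surjectivity of $g_k^*$).
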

\begin{proof} We will verify condition $(4)$ of \cite[Lemma~5.1]{S}. Let $M$ be an arbitrary countably presented module. We express $M$ as the direct limit of a countable direct system $(F_n,f_n\mid n<\omega)$ of finitely presented modules. Since $\mathcal C$ is covariantly finite, we can expand this direct system into a direct system of $\mathcal C$-preenvelopes $p_n:F_n\to C_n$.

Now, apply the contravariant Hom-functor $\Hom_R(-,T)$. From $T\in\varinjlim\mathcal C$, it follows (using Lenzing's result characterizing modules in $\varinjlim\mathcal C$) that $\Hom_R(p_n,T)$ are surjective maps. Since the inverse system $(\Hom_R(C_n,T))_{n<\omega}$ is Mittag-Leffler by our assumption, the inverse system $(\Hom_R(F_n,T))_{n<\omega}$ is the epimorphic image of a Mittag-Leffler inverse system. As such, it must be Mittag-Leffler as well by \cite[Proposition 13.2.1]{Gr}. In other words, $M$ is $T$-stationary.
\end{proof}

\begin{cor} \label{c:covarfin} Let $\mathfrak C = (\mathcal A,\mathcal B)$ be a cotorsion pair of finite type. Assume that $\mathcal A ^{<\omega}$ is covariantly finite in the category of all finitely presented modules.
% (e.g., assume that $R$ is left noetherian and right hereditary). 

Then the equivalent conditions of Theorems \ref{t:bigchar1} and \ref{t:bigchar2} are further equivalent to the condition that $\Ker (\mathfrak C)$ consists of $\Sigma$-pure-injective modules, 
which amounts to  $\Ker (\mathfrak C)=\Add(K)$ for a product-complete module $K$.
\end{cor}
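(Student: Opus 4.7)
\emph{Plan.} By Lemma~\ref{l:kernel}, $\Ker(\mathfrak C)=\Add(K)$, so $\Ker(\mathfrak C)$ consists of $\Sigma$-pure-injective modules precisely when $K$ itself is $\Sigma$-pure-injective. For such a $K$, the classical Zimmermann decomposition into indecomposable pure-injectives with local endomorphism rings yields $\Add(K)=\Prod(K)$, so $K$ is product-complete; conversely, a product-complete module is $\Sigma$-pure-injective by definition. Moreover, every $\Sigma$-pure-injective module is $\Sigma$-pure-split, so condition~(5) of Theorem~\ref{t:bigchar2} follows. Hence only one non-trivial implication remains: assuming the equivalent conditions of Theorems~\ref{t:bigchar1} and~\ref{t:bigchar2}, I must deduce that $K$ is $\Sigma$-pure-injective.

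My strategy is to verify the hypotheses of Lemma~\ref{l:covarfin} applied to $\mathcal C=\mathcal A^{<\omega}$ and $T=K$. The covariant finiteness of $\mathcal A^{<\omega}$ is assumed. To obtain the membership $K\in\varinjlim\mathcal A^{<\omega}$, I would invoke condition~(6) of Theorem~\ref{t:bigchar2}: the class $\mathcal A$ coincides with the class $\mathcal L$ of all locally $\mathcal A^{\leq\omega}$-free modules, so $K$ is the directed union of an $\aleph_1$-dense system of countably presented submodules $L\in\mathcal A^{\leq\omega}$. Since $\mathfrak C$ is of finite type and $L$ is a pure-epimorphic image of itself, Lemma~\ref{l:countlim} makes each such $L$ a Bass module over $\mathcal A^{<\omega}$, and in particular an element of $\varinjlim\mathcal A^{<\omega}$. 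As this class is closed under direct limits, it contains $K$ as well.

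For the stationarity hypothesis, I would consider an arbitrary countable direct system in $\mathcal A^{<\omega}$ with direct limit $M$. Then $M$ is countably presented and sits in $\varinjlim\mathcal A^{<\omega}\subseteq\mathcal A$. Because $K^{(\omega)}\in\Add(K)=\Ker(\mathfrak C)\subseteq\mathcal B$, one has $M\in{}^\perp K^{(\omega)}$, so Proposition~\ref{p:pureinprod} with $\mathcal G=\{K\}$ yields the $K$-stationarity of $M$. Lemma~\ref{l:covarfin} then delivers the $\Sigma$-pure-injectivity of $K$, and consequently of every module in $\Add(K)=\Ker(\mathfrak C)$. The delicate step I anticipate is the placement $K\in\varinjlim\mathcal A^{<\omega}$: the additive generator $K$ need not be countably presented, so Lemma~\ref{l:countlim} cannot be applied to $K$ directly. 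It is precisely the locally-free description of $\mathcal A$ from Theorem~\ref{t:bigchar2}(6), combined with the Bass-module reduction of Lemma~\ref{l:countlim} applied to the countably presented pieces, that bridges this gap and feeds Lemma~\ref{l:covarfin}.
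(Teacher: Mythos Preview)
Your core strategy—feeding $\mathcal C=\mathcal A^{<\omega}$ and $T=K$ into Lemma~\ref{l:covarfin}—is exactly the paper's, and your verifications of its hypotheses are correct, if more laboured than necessary. The paper obtains $K\in\varinjlim\mathcal A^{<\omega}$ in one line: finite type gives that every module in $\mathcal A$ is a direct summand of an $\mathcal A^{<\omega}$-filtered module, hence lies in $\varinjlim\mathcal A^{<\omega}$; there is no need to pass through local freeness and Lemma~\ref{l:countlim}. Likewise, the $K$-stationarity of countable direct limits over $\mathcal A^{<\omega}$ is immediate from condition~(4) of Theorem~\ref{t:bigchar2} (such limits lie in $(\varinjlim\mathcal A)^{\leq\omega}$, hence are $\mathcal B$-stationary, hence $K$-stationary), rather than via Proposition~\ref{p:pureinprod}. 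But these are stylistic detours, not errors.

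There is, however, a genuine gap in your treatment of product-completeness. You assert that $\Sigma$-pure-injectivity of $K$ already forces $\Add(K)=\Prod(K)$ via a ``Zimmermann decomposition'', but this is false in general: over $\mathbb Z$, the Pr\"ufer group $K=\mathbb Z(p^\infty)$ is injective, hence $\Sigma$-pure-injective, yet $K^{\omega}$ is divisible and not torsion, so it has $\mathbb Q$ as a direct summand, while every module in $\Add(K)$ is a $p$-group. Thus $K$ is not product-complete. The covariant-finiteness hypothesis on $\mathcal A^{<\omega}$ is doing real work here, and you have not used it for this part. The paper's argument runs as follows: once $\mathfrak C$ is closed and of finite type one has $\mathcal A=\varinjlim\mathcal A^{<\omega}$, and Crawley-Boevey's theorem (covariant finiteness of $\mathcal A^{<\omega}$ $\Leftrightarrow$ $\varinjlim\mathcal A^{<\omega}$ closed under products) then gives that $\mathcal A$, and hence $\Ker(\mathfrak C)=\mathcal A\cap\mathcal B$, is closed under direct products. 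Since $\Ker(\mathfrak C)=\Add(K)$, this yields $\Prod(K)\subseteq\Add(K)$, i.e.\ $K$ is product-complete.
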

\begin{proof}
If $\Ker (\mathfrak C)$ consists of $\Sigma$-pure-injective modules, then $(5)$ from Theorem \ref{t:bigchar2} trivially follows. For the opposite direction, assume condition $(4)$ of Theorem \ref{t:bigchar2} and use Lemma~\ref{l:covarfin} for $\mathcal C = \mathcal A ^{<\omega}$. (Notice that $\Ker (\mathfrak C) \subseteq \mathcal A \subseteq \varinjlim \mathcal C$ because $\mathfrak C$ is of finite type.) 

Finally, assume that $\mathfrak C$ is closed. Since $\mathfrak C$ is of finite type, we have $\mathcal A=\varinjlim\mathcal A^{<\omega}$, and we can use  Crawley-Boevey's result to deduce that $\mathcal A$  is closed under direct products. Thus $\Ker (\mathfrak C)$ is closed under direct products as well. Taking $K$ as in Lemma~\ref{l:kernel}, we see that $\Ker (\mathfrak C) = \Add (K)$ for a product-complete module $K$.
\end{proof}

\bigskip
\section{An application: pure-semisimple hereditary rings}
\label{sec:pss}

%\smallskip

Our previous results allow us to give various characterizations of hereditary rings which are related to pure-semisimplicity. 

\begin{thm} \label{t:ringchar} Let $R$ be right hereditary. The  following statements are equivalent.
\begin{enumerate}
\item Every cotorsion pair in $\ModR$ is tilting.
\item Every cotorsion pair in $\ModR$ is of finite type.
\item All cotorsion pairs in $\ModR$ form a set.
\item Every class of right $R$-modules closed under transfinite extensions and direct summands and containing $R$ is deconstructible.
\item Every class of right $R$-modules closed under transfinite extensions and direct summands and containing $R$ is (special) precovering.
\item Every $1$-tilting right $R$-module is $\Sigma$-pure-split.
%\item Each $1$-tilting right $R$-module is $\Sigma$-pure-injective.
\end{enumerate}
Moreover, under these conditions, $R$ is right artinian.
\end{thm}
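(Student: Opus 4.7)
The plan is to establish the cyclic chain
$(1)\Rightarrow(2)\Rightarrow(3)\Rightarrow(4)\Rightarrow(5)\Rightarrow(6)\Rightarrow(1)$
and then separately to derive the ``moreover'' statement. The first two implications are essentially formal: every tilting cotorsion pair is of finite type by the Preliminaries, and cotorsion pairs of finite type are in bijection with subsets of (a skeleton of) the finitely presented right modules, so there can be at most set-many of them.

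For $(3)\Rightarrow(4)$, I would take any class $\mathcal C$ with the prescribed closure properties and form the cotorsion pair $\mathfrak C = ({}^\perp(\mathcal C^\perp),\mathcal C^\perp)$. Considering the ascending chain of cotorsion pairs generated by the subclasses $\mathcal C^{<\kappa}$ for infinite $\kappa$, condition $(3)$ forces the chain to stabilize, so $\mathfrak C$ is set-generated; Eklof--Trlifaj then identifies its left-hand class with the summands of $\mathcal S$-filtered modules for some set $\mathcal S\subseteq\mathcal C$, and the closure of $\mathcal C$ under transfinite extensions and summands recovers $\mathcal C = {}^\perp(\mathcal C^\perp)$, giving deconstructibility. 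The step $(4)\Rightarrow(5)$ is then completeness of cotorsion pairs generated by a set (Eklof--Trlifaj again). For $(5)\Rightarrow(6)$, given any 1-tilting $T$ with tilting cotorsion pair $(\mathcal A,\mathcal B)$, the class $\mathcal L$ of locally $T$-free modules satisfies the hypotheses of $(5)$ and contains $R$, hence is precovering; Theorem~\ref{t:tiltnprec} concludes that $T$ is $\Sigma$-pure-split.

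The hard step is $(6)\Rightarrow(1)$. Under $(6)$, Corollary~\ref{c:nondectilt} already gives that every tilting cotorsion pair is closed. The plan for upgrading this to the assertion that \emph{every} cotorsion pair is tilting is to exploit right hereditarity: any cotorsion pair $(\mathcal A,\mathcal B)$ is then automatically hereditary (since $\Ext^i$ for $i\ge 2$ vanishes), so it suffices to show that $\mathcal B$ is a 1-tilting class. Here I would verify that $\mathcal B$ is definable, contains all injectives, and is closed under epimorphic images --- consequences of hereditarity combined with $(6)$ --- and invoke the classical characterization of 1-tilting classes over right hereditary rings to produce a 1-tilting module $T$ with $\mathcal B = T^{\perp_\infty}$.

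For the ``moreover'' clause, the plan is contrapositive. If $R$ is right hereditary but not right artinian, a Lukas-type construction in the spirit of Section~\ref{sec:herartinalg} should produce a 1-tilting module $T$ whose tilting cotorsion pair admits a Bass module over $\mathcal A^{<\omega}$ lying outside $\mathcal A$; by Corollary~\ref{c:nondectilt}, this $T$ is not $\Sigma$-pure-split, contradicting $(6)$. The delicate point --- and the main obstacle of the whole proof --- is to carry out the Lukas construction in the full right-hereditary setting, rather than only over hereditary artin algebras of infinite representation type.
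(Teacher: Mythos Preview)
Your cycle $(1)\Rightarrow\cdots\Rightarrow(5)$ matches the paper closely, and $(5)\Rightarrow(6)$ is fine once you note (as the paper does, citing \cite[Theorem~4.5]{SlT}) that the class $\mathcal L$ of locally $T$-free modules is closed under transfinite extensions and direct summands.

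The gap is at $(6)\Rightarrow(1)$. You propose to show directly that the right-hand class $\mathcal B$ of an \emph{arbitrary} cotorsion pair is definable, as a ``consequence of hereditarity combined with $(6)$''. But $(6)$ is a statement about \emph{tilting} cotorsion pairs only; it gives you nothing a priori about closure of a general $\mathcal B$ under direct limits or pure submodules, so the argument is circular as stated. The paper avoids this by an indirect comparison: given $(\mathcal A,\mathcal B)$, form the cotorsion pair $\mathfrak C$ \emph{generated by} $\mathcal A^{<\omega}$. Over a right hereditary ring this $\mathfrak C$ is automatically $1$-tilting, so $(6)$ and Corollary~\ref{c:P} apply to it, giving that its left-hand class equals $\varinjlim\mathcal A^{<\omega}$. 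The key input is then \cite[Lemma~9.7]{GT}, which over right hereditary rings yields $\mathcal A\subseteq\varinjlim\mathcal A^{<\omega}$; combined with the trivial reverse inclusion this forces $(\mathcal A,\mathcal B)=\mathfrak C$.

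Your plan for the ``moreover'' clause is headed in the wrong direction. No Lukas-type construction over general right hereditary rings is needed (and carrying one out would indeed be hard). The paper proceeds much more cheaply: condition $(5)$ applied to the class of flat Mittag-Leffler modules forces $R$ to be right perfect (this is \cite[Theorem~3.3]{S}); then condition $(6)$ applied to the $1$-tilting cotorsion pair generated by \emph{all} finitely presented modules shows its tilting class --- which here is the class of injectives --- is closed under direct limits, so $R$ is right noetherian. Right perfect and right noetherian together give right artinian.
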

\begin{proof} $(1)\Rightarrow (2)$ holds by \cite[Theorem 13.46]{GT}. $(2)\Rightarrow (3)$ is trivial.

$(3)\Rightarrow (4)$ follows by contradiction: if such class $\mathcal C$ is not deconstructible, then $$\Bigl\{\Bigl({}^\perp\bigl({\mathcal C^{\leq\lambda}}^\perp\bigr),\mathcal {C^{\leq\lambda}}^\perp\Bigr) \mid \lambda\hbox{ infinite cardinal}\Bigr\}$$ is not a set. (Note that the closure properties of $\mathcal C$ imply that ${}^\perp\bigl({\mathcal C^{\leq\lambda}}^\perp\bigr)\subseteq\mathcal C$ for all $\lambda$, by \cite[Corollary 6.13]{GT}.)

$(4)\Rightarrow (5)$. The condition $(4)$ yields that any such class is the left-hand class of a cotorsion pair generated by a set (cf. \cite[Corollary 6.13]{GT}), hence the class is special precovering by \cite[Theorem 6.11(b)]{GT}.

$(5)\Rightarrow (6)$. Follows from Theorem~\ref{t:tiltnprec} and \cite[Theorem 4.5]{SlT}. %, we conclude that all $1$-tilting modules have to be $\Sigma$-pure-split.

$(6)\Rightarrow (1)$. Let $(\mathcal A, \mathcal B)$ be a cotorsion pair. Since $R$ is right hereditary,  the cotorsion pair $\mathfrak C$ generated by $\mathcal A^{<\omega}$ is $1$-tilting. By \cite[Lemma 9.7]{GT}, $\mathcal A\subseteq\varinjlim\mathcal A^{<\omega}$. The latter class, however, is the left-hand class of $\mathfrak C$ by $(6)$ and Corollary~\ref{c:P}, whence $\mathfrak C = (\mathcal A,\mathcal B)$.

\smallskip
Finally, assume that the equivalent conditions $(1)$--$(6)$ hold. By \cite[Theorem~3.3]{S}, it follows from $(5)$ that $R$ is right perfect. Further, the cotorsion pair generated by the class of all finitely presented modules is $1$-tilting, and so, by $(6)$, its left-hand class is closed under direct limits. It follows that the tilting class coincides with the class of all injective modules which is therefore closed under direct limits. This proves that $R$ is right noetherian.
\end{proof}

\begin{rem}\label{r:spi}
The implications $(1)\Rightarrow (2)\Rightarrow (3)\Rightarrow (4)\Rightarrow (5)\Rightarrow (6)$ hold true over any ring $R$. If $R$ is left hereditary, then $(6)$ implies that every $1$-tilting right $R$-module is $\Sigma$-pure-injective and even product-complete, see Corollary~\ref{c:covarfin}.
\end{rem}

Recall that a ring $R$ is \emph{left pure-semisimple} if all left $R$-modules are ($\Sigma$-)pure-injective. This is equivalent to the fact that every left $R$-module is a direct sum of finitely generated modules. Further, a module $M$ is a \emph{splitter} if $\Ext^1_R(M,M)=0$.

\begin{prop}\label{pss} \cite[Proposition 4.2]{key}, \cite[Remark 4.1(2)]{DG}  Let $R$ be a {left pure-semisimple} hereditary ring. Then every cotorsion pair in $\RMod$ is generated by a finitely generated tilting module, and every indecomposable (finitely generated) left $R$-module is a splitter.
\end{prop}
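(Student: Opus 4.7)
The plan is to handle the two claims in sequence, using the left-hand analogue of Theorem~\ref{t:ringchar}.

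\textbf{First claim.} Left pure-semisimplicity makes every left $R$-module $\Sigma$-pure-injective, so every $1$-tilting left $R$-module is $\Sigma$-pure-split. Combined with $R$ being left hereditary, this is precisely the left-sided form of condition~$(6)$ of Theorem~\ref{t:ringchar}, so the left analogues of all six equivalent conditions hold. In particular, every cotorsion pair $\mathfrak{C}=(\mathcal{A},\mathcal{B})$ in $\RMod$ is tilting. To upgrade to a \emph{finitely generated} tilting module, fix any tilting $T$ inducing $\mathfrak{C}$ and decompose $T=\bigoplus_i T_i$ with each $T_i$ finitely generated indecomposable (by pure-semisimplicity). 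Left noetherianness allows $\Ext^1$ to commute with direct sums in its second variable, so $\Ext^1_R(T,T^{(I)})=0$ forces $\Ext^1_R(T_i,T_j)=0$ for all $i,j$. Over the left artinian ring $R$ the Grothendieck group has finite rank $n$ (equal to the number of simples), and the classical Euler-pairing argument for hereditary algebras shows that pairwise $\Ext^1$-orthogonal finitely generated indecomposables have linearly independent classes in $K_0$, whence at most $n$ isomorphism types among the $T_i$ occur. Replacing $T$ by a finite direct sum of one representative per iso class yields the desired finitely generated tilting module.

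\textbf{Second claim.} Let $M$ be finitely generated indecomposable and suppose for contradiction that $\Ext^1_R(M,M)\neq 0$; pick a non-split class $\xi$. Iteratively build a strict chain $M=M_0\subsetneq M_1\subsetneq M_2\subsetneq\cdots$ of finitely generated modules with each factor $M_{n+1}/M_n\cong M$ and each inclusion non-split, by pulling $\xi$ back along the last projection. The Bass module $N=\varinjlim_n M_n$ is countably generated, so by left pure-semisimplicity $N=\bigoplus_j N_j$ with each $N_j$ finitely generated indecomposable having local endomorphism ring. Each $M_n$, being finitely generated, is contained in a finite sub-sum $\bigoplus_{j\in F_n}N_j$; combining Azumaya's theorem with the fact that maps out of indecomposable finitely generated modules into direct sums factor through finite sub-sums, one argues that cofinally many of the tower's non-split inclusions must split inside the Krull--Schmidt decomposition of $N$, contradicting the choice of $\xi$.

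\textbf{Main obstacle.} The technically most delicate step is the Bass-module contradiction in the second claim. Translating the categorical non-splitness of the chain into a concrete splitting inside $\bigoplus_j N_j$ calls for careful coordinate-level bookkeeping---comparing images of the $M_n$ in finite sub-sums across successive stages of the tower, and using locality of $\End(N_j)$ to trace the would-be splittings produced by the Krull--Schmidt decomposition. A subsidiary concern in the first claim is the $K_0$-independence step: it is standard over hereditary artin algebras, but one must verify it carries over unconditionally to the left artinian, left hereditary setting (via finite composition length of each $T_i$), rather than implicitly invoking representation-finiteness.
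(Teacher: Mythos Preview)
The paper does not prove this proposition; it is quoted from \cite[Proposition~4.2]{key} and \cite[Remark~4.1(2)]{DG} without argument. There is thus no in-paper proof to compare against, and your proposal stands or falls on its own.

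\textbf{First claim.} Invoking the left-handed analogue of Theorem~\ref{t:ringchar} to conclude that every cotorsion pair in $\RMod$ is tilting is sound. The passage to a \emph{finitely generated} tilting module is where you lean on a $K_0$/Euler-form bound on the number of indecomposable summand types. That bound needs $\Hom$ and $\Ext^1$ between finitely generated modules to have finite length over a fixed commutative base ring---automatic for artin algebras, but not for an arbitrary left artinian hereditary ring. A left pure-semisimple hereditary ring is not known a priori to be an artin algebra; indeed, were this always so, Auslander's theorem would settle the hereditary case of the Pure-Semisimplicity Conjecture outright. So your acknowledged ``subsidiary concern'' is a genuine gap, not a cosmetic one, and the argument in \cite{key} does not proceed via $K_0$.

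\textbf{Second claim.} Here the proposal has a real hole. The tower $(M_n)$ can be built (at each stage $\Ext^1_R(M,M_n)$ surjects onto $\Ext^1_R(M,M)\neq 0$, by heredity), and $N=\bigcup_n M_n$ does decompose as $\bigoplus_j N_j$ with each $N_j$ finitely generated indecomposable. But the assertion that ``cofinally many of the tower's non-split inclusions must split inside the Krull--Schmidt decomposition of $N$'' is not an argument. The modules $M_n$ need not be indecomposable---a non-split self-extension of an indecomposable $M$ may well decompose into summands none of which is isomorphic to $M$---so there is no evident way to match the $M_n$ against finite sub-sums of $\bigoplus_j N_j$ and extract a splitting. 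Nor are the inclusions $M_n\hookrightarrow N$ pure, so pure-injectivity of $N$ does not help directly. The cited references establish the splitter property via structural results specific to left pure-semisimple hereditary rings; a bare Bass-module/Krull--Schmidt argument of the kind you sketch does not visibly close the gap.
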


Observe that the second property  above then also holds for right modules.

\begin{rem}\label{r:right} Let $R$ be a {left pure-semisimple} hereditary ring. Then every  indecomposable finitely generated right $R$-module is a splitter. Indeed,  every such module $A$ is endofinite, and by \cite[Lemma 5.1(2)]{Z0}   it follows that $A \cong A^{++}$, where + denotes the local dual. Now we know that $A^+$, being an indecomposable finitely generated left module, is a direct summand of a tilting module $T$ with cotorsion pair $\mathfrak C=(\mathcal A, \mathcal B)$, so it is in the kernel of $\mathfrak C$. By \cite[Lemma 9.4 (3) and (5)]{AH}, it follows that $A\cong A^{++}$ is in the kernel of  the dual cotilting cotorsion pair $(\mathcal C, \mathcal D)$ in $\ModR$, so it is isomorphic to a direct summmand in a direct product of copies of a cotilting module, and thus it is a splitter. 
\end{rem}

As for the first property, we don't know whether it is left-right-symmetric. In fact, this would imply the long-standing Pure-Semisimplicity Conjecture.

\begin{prop}\label{pssconj}
Let $R$ be a {left pure-semisimple} hereditary ring. The following statements are equivalent.
\begin{enumerate}
\item $R$ has finite representation type.
\item $R$ satisfies (one of) the conditions in Theorem \ref{t:ringchar}. 
\item There are only finitely many tilting left modules up to equivalence.
\item There are only finitely many tilting right modules up to equivalence.
\end{enumerate}
\end{prop}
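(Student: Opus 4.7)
The plan is to treat $(1)$ as a hub and prove $(1) \Leftrightarrow (k)$ for each $k \in \{2,3,4\}$ separately. Two external ingredients are essential: Auslander's classical theorem, stating that a left pure-semisimple artin algebra is of finite representation type, and Proposition~\ref{pss}, which parametrizes cotorsion pairs in $\RMod$ by finitely generated tilting left modules.

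For $(1) \Rightarrow (2), (3), (4)$: if $R$ has finite representation type, then by Auslander $R$ is also right pure-semisimple, so every right module is $\Sigma$-pure-injective and a fortiori $\Sigma$-pure-split, yielding condition $(6)$ of Theorem~\ref{t:ringchar}. For $(3)$ and $(4)$, I would use that a tilting module over a hereditary artin algebra has exactly $n$ non-isomorphic indecomposable summands, with $n$ the number of simples; as only finitely many indecomposable finitely generated modules exist, only finitely many such $n$-tuples can give tilting modules up to equivalence, on either side. The implication $(2) \Rightarrow (1)$ would then follow from the last clause of Theorem~\ref{t:ringchar}, which delivers right artinianness: together with left artinianness (obtained from left pure-semisimplicity via semiprimarity plus left noetherianness), $R$ is up to Morita equivalence a hereditary artin algebra, and Auslander's theorem applies.

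For $(3) \Rightarrow (1)$: by Proposition~\ref{pss}, equivalence classes of tilting left modules biject with cotorsion pairs in $\RMod$, each with kernel $\Add(T)$ for a finitely generated tilting $T$. Every indecomposable finitely generated left module $M$ is a splitter by Proposition~\ref{pss}, so it lies in $\Add(T_M)$ for the tilting module $T_M$ generating the cotorsion pair $({}^\perp(M^\perp), M^\perp)$. Since each $T_M$ admits at most $n$ non-isomorphic indecomposable summands, finitely many tilting left modules bound the number of indecomposable finitely generated left modules, forcing finite representation type.

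The implication $(4) \Rightarrow (1)$ is the main obstacle, as no right-sided analogue of Proposition~\ref{pss} is available a priori. My plan here is to use the local dual $(-)^+$ from Remark~\ref{r:right}: every indecomposable finitely generated right module $A$ satisfies $A \cong A^{++}$ and sits in the kernel of a cotilting cotorsion pair $(\mathcal C, \mathcal D)$ in $\ModR$ dual to the tilting cotorsion pair of $A^+$ in $\RMod$. The critical step is to convert finiteness of tilting right modules into finiteness of cotilting right modules, via the tilting/cotilting duality available once $R$ is two-sided artinian hereditary; showing that $(4)$ already forces $R$ to be right artinian is the most delicate point. Once this is in place, a dual run of the $(3) \Rightarrow (1)$ argument bounds the indecomposable finitely generated right modules and delivers finite representation type.
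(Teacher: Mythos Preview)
Your argument for $(3)\Rightarrow(1)$ matches the paper's. The decisive gap is in $(2)\Rightarrow(1)$. You want to conclude from two-sided artinianness and hereditariness that $R$ is, up to Morita equivalence, a hereditary artin algebra, so that Auslander's theorem applies. This inference is not available: there exist two-sided artinian hereditary rings that are not artin algebras (the triangular matrix rings built from a pair of division rings $D\subset E$ with $_DE$ finite but $E_D$ infinite are exactly the candidate counterexamples to the Pure-Semisimplicity Conjecture). If your reduction worked, the PSSC would already be a theorem via Herzog's reduction to the hereditary case. The paper avoids this trap entirely: it invokes \cite[Theorem 2.10]{wendofin} and \cite[Theorem 5.6]{key}, which characterize finite representation type of a left pure-semisimple hereditary ring by a chain condition on a specific tilting right module $T$; condition $(6)$ of Theorem~\ref{t:ringchar} then forces $T$ to be $\Sigma$-pure-injective (Remark~\ref{r:spi}), which implies that chain condition.

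For $(4)\Rightarrow(1)$ you correctly identify the difficulty and propose a route through cotilting duals and right artinianness, but this route inherits the same gap (you would again need Auslander's theorem at the end), and the ``most delicate point'' you flag is never resolved. The paper takes a much shorter path $(4)\Rightarrow(3)$: since $R$ is left pure-semisimple, every tilting left module is product-complete, hence cotilting, hence equivalent to $T^c$ for some tilting right module $T$ by \cite[Theorems 15.31 and 15.18]{GT}. Thus the map $T\mapsto T^c$ surjects the equivalence classes of tilting right modules onto those of tilting left modules, and finiteness transfers immediately. No right artinianness and no Auslander-type statement is needed.
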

 \begin{proof} By \cite[Theorem 2.10]{wendofin} and \cite[Theorem 5.6]{key}, the ring $R$ has finite representation type if and only if a certain tilting left $R$-module $W$ is endofinite, or equivalently, a certain tilting right $R$-module $T$ satisfies the descending chain condition on cyclic $\End_R(T)$-submodules. But the latter follows from    condition (6) in Theorem \ref{t:ringchar}, since $T$ is then even $\Sigma$-pure-injective, cf.~Remark \ref{r:spi}. 
 
(3)$\Rightarrow$(1). Every tilting left module  is equivalent to a finitely generated one, so there are only finitely many indecomposable modules up to isomorphism that occur as direct summands in a tilting module. Since every (finitely generated) indecomposable left module is a splitter, thus a direct summand in some tilting module, the claim follows. 

(4)$\Rightarrow$(3). Since any tilting left module is product-complete, it is also cotilting, and thus equivalent to $T^c$ for some tilting right module $T$ by \cite[Theorems 15.31 and 15.18(a)]{GT}. The rest follows by \cite[Theorem 15.18(b)]{GT}.
\end{proof}

%\noindent \textbf{Open questions.}\begin{enumerate}\item Among (right) hereditary rings, characterize the ones satisfying the equivalent conditions $(1)$--$(6)$ of Theorem~\ref{t:ringchar}. The following section shows that hereditary artin algebras of infinite representation type do not satisfy these conditions, while all right pure-semisimple rings trivially do; are they the only ones? \item If the answer is no, is it enough to assume in addition that each indecomposable right module is a splitter to deduce right pure-semisimplicity?\end{enumerate}

\bigskip
\section{An example: Bass modules for Lukas' tilting over hereditary artin algebras}
\label{sec:herartinalg}

We have seen that non-$\sum$-pure-split tilting modules $T$ give rise to non-decon\-structible, and even non-precovering, classes of all locally $T$-free modules. The point is the existence of a Bass module $N$ over $\mathcal A ^{< \omega}$ such that $N  \notin \mathcal A$ (see Corollary \ref{c:nondectilt} and Theorem \ref{t:tiltnprec}). 

We will now present a concrete construction of such a Bass module for the particular case when $R$ is a hereditary artin algebra of infinite representation type and $T$ is the Lukas tilting module (see \cite[Example 13.7(b)]{GT} and \cite[\S7]{SlT}). 

In this case, it is well known that the representative set of all indecomposable finitely generated modules can be divided in three parts: $\p$, $\q$, and $\tube$ formed by the the indecomposable preprojective, preinjective, and regular modules, respectively. The modules in the class $\add {(\p)}$ ($\add {(\q)}$, $\add {(\tube)}$) are called the finitely generated preprojective (preinjective, regular) modules.    

In our setting, $\mathcal A ^{< \omega} = \add {(\p)}$, while $\mathcal A$ is the class of all Baer (= $\p$-filtered) modules, $\mathcal L$ the class of all locally Baer modules, and $\varinjlim \mathcal A=\varinjlim\p $ coincides with the torsion-free class $\mathcal F$ in the  torsion pair $(\mathcal T, \mathcal F)$ generated by $\tube$, cf.\ \cite{AHT,AKT,SlT}.      

In this section, a strictly increasing chain of finitely generated modules $$\mathcal P : \quad 0 = P_0 \subsetneq P_1 \subsetneq \dots \subsetneq P_n \subsetneq P_{n+1} \subsetneq \dots$$ is called \emph{special} provided that $\mathcal P$ consists of preprojective modules, but $P_{n+1}/P_n \in $ is regular for each $0 < n < \omega$.     

\begin{lem}\label{chain} Let $\mathcal P$ be a special chain. Then the module $N = \bigcup_{n < \omega} P_n$ is a Bass module over $\mathcal A ^{< \omega}$ such that $N \notin \mathcal A$.
\end{lem}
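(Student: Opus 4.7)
The statement bundles two claims, that $N$ is a Bass module over $\mathcal A^{<\omega}=\add(\p)$, and that $N\notin\mathcal A$. The first is essentially by construction: the special chain $\mathcal P$ presents $N$ as the countable directed union $N=\bigcup_{n<\omega}P_n=\varinjlim_{n<\omega}P_n$ with inclusions as transition maps; each $P_n$ lies in $\add(\p)=\mathcal A^{<\omega}$ because $\mathcal P$ consists of finitely generated preprojective modules, and $N$ is countably presented since $R$ is an artin algebra. So $N$ qualifies as a Bass module over $\mathcal A^{<\omega}$ in the sense of Lemma~\ref{l:countlim}.

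For the second claim, my plan is to argue by contradiction and reduce to a Mittag-Leffler statement. Since $R$ is hereditary, $\mathfrak C$ is hereditary and of finite type, so $\Ext^1_R(P_n,B)=0$ for every $P_n\in\mathcal A$ and every $B\in\mathcal B$. For the countable direct limit $N=\varinjlim P_n$ the classical short exact sequence relating $\Ext^1$ and $\varprojlim{}^1$ collapses to
\[
\Ext^1_R(N,B)\cong\varprojlim{}^1_{n<\omega}\Hom_R(P_n,B)\qquad(B\in\mathcal B).
\]
Assuming $N\in\mathcal A={}^\perp\mathcal B$ will then force each inverse system $(\Hom_R(P_n,B))_{n<\omega}$ with $B\in\mathcal B$ to be Mittag-Leffler; equivalently, via Proposition~\ref{p:pureinprod} applied to the countably presented $N$ (cf.\ Remark~\ref{r:C}), it will force $N$ to be $C$-stationary for an elementary cogenerator $C$ of $\mathcal B$.

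To refute this, I would exhibit a test module $B\in\mathcal B$ breaking the Mittag-Leffler condition. The key sequences are $0\to P_n\to P_{n+1}\to R_n\to 0$ with $R_n:=P_{n+1}/P_n$ nonzero regular for $n\geq 1$. Applying $\Hom_R(-,B)$ and using $\Ext^1_R(P_{n+1},B)=0$ identifies the cokernel of the restriction $\Hom_R(P_{n+1},B)\to\Hom_R(P_n,B)$ with $\Ext^1_R(R_n,B)$. Because each regular $R_n$ lies in $\mathcal T$ while $\mathcal A\subseteq\mathcal F$, we have $R_n\notin\mathcal A={}^\perp\mathcal B$, so for every $n$ there exists $B_n\in\mathcal B$ with $\Ext^1_R(R_n,B_n)\neq0$. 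A natural candidate is then the product $B=\prod_{n\geq 1}B_n$ (which remains in $\mathcal B$ since $\mathcal B$ is definable hence closed under products), chosen so that $\Ext^1_R(R_n,B)\neq0$ for all $n$ and that the images of $\Hom_R(P_m,B)$ in $\Hom_R(P_1,B)$ strictly decrease as $m$ grows.

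The hard part will be precisely this last selection: passing from nonvanishing of each individual cokernel $\Ext^1_R(R_n,B)$ -- which only records nonsurjectivity of each restriction -- to genuine non-stabilisation of the images along the whole chain. This requires a homological witness that, for every $n$, some $f_n\in\Hom_R(P_n,B)$ can be chosen whose obstruction to extension persists under all further restrictions down the chain. The needed input draws on the interaction between $\p$ and $\tube$ under the torsion pair $(\mathcal T,\mathcal F)$ generated by $\tube$ (cf.\ \cite{AHT,AKT,SlT}); with this in hand, the Mittag-Leffler condition is refuted and the assumption $N\in\mathcal A$ is contradicted.
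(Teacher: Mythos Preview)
Your treatment of the Bass-module claim is fine and matches the paper. For $N\notin\mathcal A$, however, your proposal has a genuine gap that you yourself name: you reduce to exhibiting some $B\in\mathcal B$ for which the inverse system $(\Hom_R(P_n,B))_{n<\omega}$ is not Mittag-Leffler, but you never produce one. Knowing $\Ext^1_R(R_n,B)\neq 0$ for each $n$ only says each single restriction map is non-surjective; it does not prevent the images in a fixed $\Hom_R(P_k,B)$ from stabilising (a tower of non-surjections can perfectly well be Mittag-Leffler). Your closing sentence defers the actual work to unspecified ``input'' from the references, which is not a proof.

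The paper avoids all of this with a short torsion-theoretic argument. Assume $N\in\mathcal A$; then $N$ is Baer, hence admits a $\p$-filtration $(Q_n\mid n<\omega)$ with each $Q_n$ finitely generated preprojective, by \cite{AHT}. Pick $i$ with $P_1\subseteq Q_i$. The canonical surjection $N/P_1\twoheadrightarrow N/Q_i$ then maps a module in $\mathcal T$ (since $N/P_1$ is filtered by the regular modules $P_{n+1}/P_n$, $n\geq 1$) onto a nonzero module in $\mathcal F$ (since $N/Q_i$ is again $\p$-filtered, hence Baer, and $N$ is not finitely generated while $Q_i$ is). This contradicts $\Hom_R(\mathcal T,\mathcal F)=0$. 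No stationarity, no $\varprojlim{}^1$, and no construction of a test module $B$ are needed.
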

\begin{proof} Clearly, $N \in \varinjlim_{\omega} \mathcal A ^{< \omega}$. 

Assume $N$ is a Baer module. As shown in \cite{AHT}, $N$ has a $\p$-filtration $( Q_n \mid n < \omega )$ consisting of finitely generated preprojective modules. Let $i < \omega$ be such that $P_1 \subseteq Q_i$. Then there is an epimorphism $N/P_1 \to N/Q_i$. However, this contradicts the fact that in the torsion pair $(\mathcal T, \mathcal F)$, the torsion-free class $\mathcal F$ contains all Baer modules, so $N/Q_i \in \mathcal F$, while $\mathcal T$ contains all finitely generated regular modules, so $N/P_1 \in \mathcal T$. This proves that $N \notin \mathcal A$.
\end{proof}
  
So it suffices to construct the special chains. We will distinguish the tame case from the wild one. For the latter, we will employ the notion of a mono orbit due to Dagmar Baer (see \cite{B} and \cite{KT}):

\begin{defn}\label{monoo} Let $R$ be a wild hereditary artin algebra, and $P \in \p$. 
The $\tau^{-1}$-orbit of $P$ (that is, the set $O(P) := \{ \tau^{-n}(P) \mid n < \omega \}$ where $\tau^{-1} = Tr D$ is the Auslander-Reiten translation) 
is called a \emph{mono orbit} provided that 
\begin{itemize}
\item[(a)] for each $X \in O(P)$ and each $Y \in \p$, all non-zero homomorphisms $f : X \to Y$ are injective; and
\item[(b)] every non-zero homomorphism between elements of $O(P)$ has a regular cokernel.            
\end{itemize}
\end{defn}

Baer proved that for each hereditary artin algebra of wild representation type, there exists an indecomposable projective module $P$ whose $\tau^{-1}$-orbit is a mono orbit (see \cite[Proposition 2.2]{B}). Of course, $O(Q)$ is then a mono orbit for each $Q \in O(P)$.       

\begin{lem}\label{tame-wild} 
\begin{itemize}

\item[(1)] Assume $R$ is tame. Let $P$ be a non-zero finitely generated preprojective module, and $\{ S_n \mid 0 < n < \omega \}$ be any sequence of simple regular modules from (not necessarily distinct) homogenous tubes. Then there exists a special chain $\mathcal P$ such that $P_1 = P$, and $P_{n+1}/P_n \cong S_n$ for each $0 < n < \omega$.
\item[(2)] Assume $R$ is wild. Let $P \in \p$ be such that $O(P)$ is a mono orbit. Then $O(P)$ contains a special chain such that $P_1 = P$.          
\end{itemize}
\end{lem}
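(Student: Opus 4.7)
The plan is to build both chains by induction, setting $P_1:=P$ in each case and constructing $P_{n+1}$ as a suitable extension of $P_n$.

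For part (1), at step $n$ the goal is a non-split extension $0\to P_n\to P_{n+1}\to S_n\to 0$ with $P_{n+1}$ again preprojective. First I would establish $\Ext^1_R(S_n,P_n)\neq 0$: by Auslander-Reiten duality and the fact that $\tau S_n\cong S_n$ (since $S_n$ lies in a homogeneous tube), we have $\Ext^1_R(S_n,P_n)\cong D\overline{\Hom}(P_n,S_n)$, and the bar quotient coincides with $\Hom_R(P_n,S_n)$ because morphisms from a preprojective to a regular cannot factor through an injective module. The non-vanishing of $\Hom_R(P_n,S_n)$ is a standard consequence of the Euler form, using $\Ext^1_R(P_n,S_n)=0$ together with the defect function: preprojectives have negative defect while simple regulars in homogeneous tubes have defect zero, forcing $\dim_k\Hom_R(P_n,S_n)>0$. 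Picking a non-split extension, I then decompose its middle term $M=M_p\oplus M_r\oplus M_i$ into preprojective, regular, and preinjective summands. The preinjective summand vanishes immediately, since $\Hom_R(M_i,S_n)=0$ forces $M_i\subseteq P_n$, hence $M_i=0$ as $P_n\in\add(\p)$. Eliminating the regular part $M_r$ is the crux: I would argue either via a genericity argument on the affine variety of extension classes (the dimension vector $\dim P_n+\dim S_n$ is realised by a unique indecomposable preprojective, which occurs as the generic middle term once $P_n$ is far enough along the $\tau^{-1}$-orbit) or via an explicit construction using irreducible maps in the preprojective component of the Auslander-Reiten quiver.

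For part (2), every $P_n$ is taken to belong to the orbit itself: $P_n:=\tau^{-k_n}(P)$ for a strictly increasing sequence $(k_n)_{n\geq 1}$ of non-negative integers with $k_1=0$. The task then reduces to finding $k_{n+1}>k_n$ with $\Hom_R(\tau^{-k_n}(P),\tau^{-k_{n+1}}(P))\cong\Hom_R(P,\tau^{-(k_{n+1}-k_n)}(P))$ non-zero. This is ensured by the growth theorem of Baer and Kerner for wild hereditary algebras: the dimensions $\dim_k\Hom_R(P,\tau^{-j}(P))$ grow exponentially in $j$, so they are non-zero for all sufficiently large $j$. Any non-zero homomorphism $P_n\to\tau^{-k_{n+1}}(P)$ is then automatically injective with regular cokernel by the very definition of a mono orbit, and identifying $P_n$ with its image in $P_{n+1}:=\tau^{-k_{n+1}}(P)$ yields the required strict chain of preprojectives with regular consecutive quotients.

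The main obstacle is in part (1), namely guaranteeing that some non-split extension of $S_n$ by $P_n$ has a middle term with no regular direct summand; part (2) is considerably simpler, because the mono-orbit hypothesis directly supplies the injectivity and the regularity of cokernels, and only the existence of non-zero homomorphisms within the orbit needs to be addressed, which is precisely the content of the growth theorem.
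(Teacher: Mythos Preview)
Your part~(2) is correct and close to the paper's argument. The paper is slightly more economical: it finds a single $k>0$ with $\Hom_R(P,\tau^{-k}P)\neq 0$ (which needs less than the full growth theorem), takes the resulting short exact sequence $0\to P\to\tau^{-k}P\to X\to 0$ with $X$ regular, and then simply applies the exact functor $\tau^{-1}$ iteratively to obtain $0\to\tau^{-nk}P\to\tau^{-(n+1)k}P\to\tau^{-n}X\to 0$ for all $n$. Since $\tau^{-1}$ preserves regularity, this produces the whole chain in one stroke, without invoking a new growth estimate at each step.

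Part~(1) has a genuine gap at exactly the point you flag as the main obstacle. Your genericity approach does not work as stated: the lemma allows $P=P_1$ to be an \emph{arbitrary} non-zero finitely generated preprojective module, not an indecomposable one, so $\dim P_n+\dim S_n$ need not be the dimension vector of any indecomposable preprojective (let alone a unique one), and there is no reason for $P_n$ to lie ``far along'' a single $\tau^{-1}$-orbit. The second option, via irreducible maps, is too vague to evaluate.

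The paper's argument here is both simpler and works for \emph{every} non-split extension: if the middle term had a non-zero regular summand $T$, then the composite $T\hookrightarrow P_{n+1}\overset{g_n}\twoheadrightarrow S_n$ is non-zero (otherwise $T\subseteq P_n$, impossible since there are no non-zero maps from regular to preprojective modules), hence surjective because $S_n$ is simple regular. Now the key feature of the \emph{tame} case is used: the category of finitely generated regular modules is abelian, so the kernel of $g_n\restriction T$ is regular; but it is also contained in $P_n$, forcing it to be zero. Thus $T\cong S_n$ and the sequence splits, a contradiction. This abelianness of the regular category is precisely what makes the tame case go through without any genericity or dimension-vector analysis, and it is the missing ingredient in your proposal.
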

\begin{proof} (1) Let $P_1 = P$, and assume that $P_n$ is defined for some $0 < n < \omega$. We can factorize the embedding of $P_n$ into its injective envelope through the homogenous tube containing $S_n$. Since the elements of the tube are $\{ S_n \}$-filtered, necessarily $\Hom _R(P_n,S_n) \neq 0$. As $\tau (S_n) = S_n$, the Auslander-Reiten formula gives $\Ext ^1_R(S_n,P_n) \neq 0$. So there is a non-split extension $0 \to P_n \to P_{n+1} \overset{g_n}\to S_n \to 0$. 

It remains to prove that $P_{n+1}$ is preprojective. Clearly, $P_{n+1}$ cannot have any non-zero preinjective direct summands. If $T_n$ is a non-zero regular direct summand of $P_{n+1}$, then $g_n(T_n) = S_n$, because $S_n$ is simple regular. Since $R$ is tame, the category of all finitely generated regular modules is abelian, so the kernel of $g_n \restriction T_n$ is the zero submodule of $P_n$. Thus $g_n$ splits, a contradiction.                             

(2) (Kerner) Consider $0 < k < \omega$ such that $\Hom _R(P,\tau^{-k}P) \neq 0$. Since $O(P)$ is a mono orbit, there is an exact sequence $0 \to P \to \tau^{-k}P \to X \to 0$ where $X$ is regular. Iterated application of the Auslander-Reiten translation $\tau^{-1}$ yields the exact sequences $0 \to \tau^{-nk}P \to \tau^{-(n+1)k}P \to \tau^{-n}X \to 0$ for all $0 < n < \omega$. Since the modules $\tau^{-n}X$ ($n < \omega$) are regular, it suffices to put $P_1 = P$ and $P_{n+1} = \tau^{-nk}P$ for each $0 < n < \omega$.     
\end{proof} 
                                       
\emph{Acknowledgement}: We thank Otto Kerner for suggesting the use of mono orbits for constructing special chains in the wild case.

% = Bibliography ==============================================================

%\smallskip


\begin{thebibliography}{AHT}

\bibitem{A} L. Angeleri H\" ugel,
\emph{Covers and envelopes via endoproperties of modules},
Proc.\ London Math.\ Soc.\ \textbf{86}\ (2003), 649--665.

\bibitem{key}{L.~Angeleri H\"ugel}, 
\emph{A key module over  pure-semisimple hereditary rings},
J.\ Algebra  \textbf{307}\ (2007),  no. 1, 361--376.

\bibitem{AH} L.\ Angeleri H\"ugel, D.\ Herbera,
  \emph{Mittag--Leffler conditions on modules},
  Indiana Math.\ J. {\bf 57}\ (2008), 2459--2517.
  
\bibitem{wendofin} L.\ Angeleri H\"ugel, D.\ Herbera,
  \emph{Auslander-Reiten-components over pure-semisimple hereditary rings}, 
  J.\ Algebra {\bf 331}\ (2011), 285--303.
  
\bibitem{AHT2} L.\ Angeleri H\" ugel, D.\ Herbera, J.\ Trlifaj,
	\emph{Tilting modules and Gorenstein rings},
	Forum Math.\ \textbf{18}\ (2006), 211--229.	  

\bibitem{AHT} L.\ Angeleri\ H\" ugel, D.\ Herbera, J.\ Trlifaj,
  \emph{Baer modules and Mittag-Leffler modules over tame hereditary algebras},
  Math. Z. \textbf{265}\ (2010), 1--19.
	
\bibitem{AKT} L.\ Angeleri\ H\" ugel, O.\ Kerner, J.\ Trlifaj,
  \emph{Large tilting modules and representation type},
  Manuscripta Math. \textbf{132}\ (2010), 483--499.

\bibitem{AS} L.\ Angeleri H\"ugel, M. Saor\'\i n,
  \emph{Modules with perfect decompositions},
  Math.\ Scand.\ {\bf 98}\ (2006), 19--43.

\bibitem{AST} L.\ Angeleri H\" ugel, J.\ \v Saroch, J.\ Trlifaj,
	\emph{On the telescope conjecture for module categories},
	J.\ Pure Appl.\ Algebra \textbf{212}\ (2008), 297--310.

\bibitem{B} D.\ Baer,
  \emph{Homological properties of wild hereditary artin algebras},
  Proc. ICRA IV, Springer LNM \textbf{1177}\ (1986), 1--12.

\bibitem{DG} N.\ V.\ Dung, J.\ L.\ Garc\'ia, 
\emph{Indecomposable modules over pure semisimple hereditary rings},
J.\ Algebra \textbf{371}\ (2012), 577--595.

\bibitem{GT} R.\ G\"obel, J.\ Trlifaj,
	\emph{Approximations and Endomorphism Algebras of Modules},
	de Gruyter Expositions in Mathematics \textbf{41},
	2nd revised and extended edition, Berlin-Boston 2012.

\bibitem{Gr} A. Grothendieck,
	\emph{\'El\'ements de g\'eom\'etrie alg\'ebrique. III. \'Etude cohomologique des faisceaux coh\'erents},
	Inst.\ Hautes \'Etudes Sci.\ Publ.\ Math.\ No.\ 11, 1961.

\bibitem{H} D.\ Herbera,
  \emph{Definable classes and Mittag-Leffler conditions},
  Ring Theory and Its Appl., Contemporary Math. \textbf{609}\ (2014), 137--166. 

\bibitem{HT} D.\ Herbera, J.\ Trlifaj, 
  \emph{Almost free modules and Mittag--Leffler conditions}, 
  Advances in Math. \textbf{229}\ (2012) 3436--3467.
  
\bibitem{KT} O.\ Kerner, M.\ Takane,
  \emph{Mono orbits, epi orbits, and elementary vertices of representation infinite quivers},
  Comm. in Algebra \textbf{25}\ (1997), 51--77.

\bibitem{P} M.\ Prest,
  \emph{Purity, Spectra and Localisation},
  Enc.\ Math.\ Appl. \textbf{121}, Cambridge Univ.\ Press, Cambridge 2009.
	
\bibitem{S} J.\ \v Saroch,
  \emph{Approximations and Mittag-Leffler conditions --- the tools},
preprint, arXiv:1612.xxxxx.

\bibitem{SS} J.\ \v Saroch, J.\ \v S\v tov\'\i\v cek,
  \emph{The countable Telescope Conjecture for module categories},
  Adv.\ Math. {\bf 219}\ (2008), 1002--1036.
  
\bibitem{SlT} A.\ Sl\'avik, J.\ Trlifaj,
	\emph{Approximations and locally free modules}, 
	Bull.\ London Math.\ Soc.\ {\bf 46}\ (2014), 76--90. 

\bibitem{X} J.\ Xu,
  \emph{Flat Covers of Modules},
  Lecture Notes in Mathematics {\bf 1634},
  Springer, New York, 1996.

\bibitem{Z0} W. Zimmermann,
  \emph{Existenz von AR-Folgen},
  Arch.\ Math. \textbf{40}\ (1983), 40--49.
	
\bibitem{Z1} W. Zimmermann,
  \emph{($\Sigma$--) algebraic compactness of rings}, 
  J.\ Pure Appl.\ Algebra {\bf 23}\ (1982), 319--328.  
	
\end{thebibliography}
\end{document}